\pgfplotsset{compat=newest}
\tikzset{%
	every neuron/.style={
		circle,
		draw,
		minimum size=1cm
	},
	neuron missing/.style={
		draw=none, 
		scale=4,
		text height=0.333cm,
		execute at begin node=\color{black}$\vdots$
	},
}
\definecolor{link}{rgb}{0.18,0.25,0.63}
\definecolor{myred}{rgb}{0.7,0.25,0.2}
\definecolor{mygray}{rgb}{0.8,0.8,0.8}
\numberwithin{equation}{section}
\newcommand{\cmmnt}[1]{}
\g@addto@macro{\endabstract}{\@setabstract}
\newcommand{\authorfootnotes}{\renewcommand\thefootnote{\@fnsymbol\c@footnote}}%
\definecolor{myred}{rgb}{0.78,0.20,0.00}
\newtheorem{definition}{Definition}[section]
\newtheorem{theorem}[definition]{Theorem}
\newtheorem{proposition}[definition]{Proposition}
\newtheorem{lemma}[definition]{Lemma}
\newtheorem{remark}[definition]{Remark}
\newcommand{\norm}[1]{\left\| #1 \right\|}
\newcommand{\abs}[1]{ \left\vert #1 \right\vert}
\newcommand{\set}[1]{ \left\{ #1  \right\}}
\newcommand{\R}{ \mathbb{R}}
\newcommand{\N}{ \mathbb{N}}
\begin{document}

 \begin{center}
 \Large
   \textbf{\textsc{A descent algorithm for the optimal control of {ReLU} neural network informed {PDEs} based on approximate directional derivatives}} \par \bigskip \bigskip
   \normalsize
    \textsc{Guozhi  Dong}\textsuperscript{$\,1$},
  \textsc{Michael Hinterm\"uller}\textsuperscript{$\,2,$$3$}, \textsc{Kostas Papafitsoros}\textsuperscript{$\,4$}
\let\thefootnote\relax\footnote{
\textsuperscript{$1$}School of Mathematics and Statistics, HNP-LAMA, Central South University, Lushan South Road 932, 410083 Changsha, China
}
\let\thefootnote\relax\footnote{
\textsuperscript{$2$}Institute for Mathematics, Humboldt-Universit\"at zu Berlin, Unter den Linden 6, 10099 Berlin, Germany}

\let\thefootnote\relax\footnote{
\textsuperscript{$3$}Weierstrass Institute for Applied Analysis and Stochastics (WIAS), Mohrenstrasse 39, 10117 Berlin, Germany}

\let\thefootnote\relax\footnote{
\textsuperscript{$4$}School of Mathematical Sciences, Queen Mary University of London, Mile End Road, E1 4NS, UK}

\let\thefootnote\relax\footnote{
\vspace{-12pt}
\begin{tabbing}
\hspace{3.2pt}Emails: \= \href{mailto:guozhi.dong@csu.edu.cn}{\nolinkurl{guozhi.dong@csu.edu.cn}},
\href{mailto:Hintermueller@wias-berlin.de}{\nolinkurl{hintermueller@wias-berlin.de}},
 \href{mailto: k.papafitsoros@qmul.ac.uk}{\nolinkurl{k.papafitsoros@qmul.ac.uk}}
  \end{tabbing}
}
\end{center}
\vspace{-0.8cm}

	\begin{abstract}
 We propose and analyze a numerical algorithm for solving a class of  optimal control problems for  learning-informed semilinear partial differential equations. The latter  is a class of PDEs with constituents that are in principle unknown and are approximated by nonsmooth ReLU neural networks.  We first show that a direct smoothing of the ReLU network with the aim to make use of classical numerical solvers can have certain disadvantages, namely potentially introducing multiple solutions for the corresponding state equation. This motivates us to devise a numerical algorithm that treats directly the nonsmooth optimal control problem,  by employing a descent algorithm inspired by a bundle-free method. Several numerical examples are provided and the efficiency of the algorithm is shown.
		\vskip .3cm
		%{\bf AMS subject classifications.} \ { }
		%\vskip .3cm	
		\noindent	
		{\bf Keywords.} {Optimal control of nonsmooth partial differential equations, data-driven models, neural networks, bundle-free methods, descent algorithms}
	\end{abstract}

\section{Introduction}

\subsection{Context and motivation}

%\vspace{1cm}

In this paper we study a numerical algorithm for the following artificial neural network based optimal control problem:

\begin{equation}\label{P_intro}\tag{$P_{\mathcal{N}}$}
\begin{aligned}
&\text{minimize }\quad J(y,u):= \frac{1}{2} \|y-g\|_{L^{2}(\Omega)}^{2} + \frac{\alpha}{2} \|u\|_{L^{2}(\om)}^{2},\quad \text{ over } (y,u)\in H_{0}^{1}(\om) \times L^{2}(\om),\\
&\text{subject to }\left \{
\begin{aligned}
-\Delta y + \mathcal{N}(\cdot, y)&=u, \;\; \text{ in }\Omega,\\
y&=0, \;\; \text{ on }\partial \Omega,
\end{aligned}
\right. \quad \text{ and }\quad  u\in \mathcal{C}_{ad}.
\end{aligned}
\end{equation}
Here $\Omega$ denotes an open, bounded, Lipschitz domain in $\RR^{d}$ with boundary $\partial \om$, $d\ge 2$, $g\in L^{2}(\om)$ is a given desired state, $\alpha>0$ is fixed, and $\mathcal{C}_{ad}$ is an admissible set for the control $u$, which is assumed to be a nonempty, closed and convex subset of $L^{p}(\om)$ for some $p\ge 2$. The state (variable) is $y$ which, given a control $u$, solves a semilinear elliptic partial differential equation (PDE), the state equation. The term that renders the above problem nonstandard is the function $\mathcal{N}: \RR^{d}\times \RR\to \RR$, a constituent of the PDE acting as a constraint for the minimization problem.  In fact, throughout we assume that $\mathcal{N}$ represents a ReLU (Rectified Linear Unit) artificial neural network, that is, a neural network that has the ReLU  $\sigma(t):=\max(t,0)$ as its activation function. We note that the ReLU is one of the most common and advantageous activation functions in deep learning \cite{bengio2013representation, glorot2011deep}, see Section \ref{sec:deep_learning} for more details and definitions. As a result, $\mathcal{N}$ is in general a nonlinear and nonsmooth function.  We mention that here we consider $\mathcal{N}$ to be monotonically increasing in the variable $y$ which guarantees the uniqueness of a solution to the state equation, resulting in a well-defined control-to-state map.

The semilinear PDE in \eqref{P_intro} is thus an instance of a \emph{learning-informed} PDE, a concept that was  introduced in \cite{DonHinPap20} and further explored  recently in other works \cite{aarset2022learning, kaltenbacher2021discretization}. We assume that it forms an approximating model to an unknown ground truth physical law expressed by
\begin{equation}\label{intro:state_f}
\left \{
\begin{aligned}
-\Delta y + f(\cdot, y)&=u, \;\; \text{ in }\Omega,\\
y&=0, \;\; \text{ on }\partial \Omega,
\end{aligned}
\right. 
\end{equation}
with the function $f$ being some unknown nonlinearity, which is approximated by the network $\mathcal{N}$.  This could be achieved for instance in a setting where we have at our disposal a dataset 
\[D:=\{(y_{i}, u_{i}):\; \text{$y_{i}$ (approximately) solves \eqref{intro:state_f} for $u_{i}$},\;i=1\ldots, n_{D}\},\]
which corresponds to some pre-specified controls and associated state responses, collected for example through measurements or computations. This dataset can be used towards evaluation instances of $f$ via $f(x_{j}, y_{i}(x_{j}))\simeq u_{i}(x_{j})+ \Delta y_{i} (x_{j})$ where $\{x_{j}\}_{j=1}^{\ell}$ is an appropriate discrete collection of points in the domain. Using these instances as a training set, a neural network $\mathcal{N}$ can be trained in the context of supervised learning in an \emph{offline phase}, and take the role of an approximating map for the unknown $f$. 
Applications of the above framework were considered in \cite{DonHinPap20} in order to learn the physical law that governs the separation of a fluid into two immiscible phases as well as to learn the physical law behind magnetic resonance imaging (MRI), where, instead of a PDE, a system of ordinary differential equations (ODEs) acts as a constraint \cite{DonHinPap19}.

Several theoretical aspects of the  optimal control problem \eqref{P_intro} were studied in detail in \cite{DonHinPapVoe22}. There, existence and uniqueness of solutions to the state equation were shown, as well as continuity and directional differentiability properties of the control-to-state map.  The main challenge here is the aforementioned nonsmoothness of $\mathcal{N}$ due to the ReLU. In fact, it can be shown that the set of functions represented by ReLU neural networks coincides with the family of piecewise affine maps. In general one does not expect the associated control-to-state map $S_{\mathcal{N}}$ to be G\^ateaux differentiable which poses difficulties in the derivation of first-order optimality conditions for the optimal control problem. Addressing this latter aspect, stationary conditions were derived in the companion work \cite{DonHinPapVoe22} based on generalized differentiability concepts. In this paper, we focus on establishing algorithms for the numerical solution of \eqref{P_intro} towards the approximation of so-called B-stationary points.

With the desire of making use of classical numerical solvers, an immediate approach to solving \eqref{P_intro} would be to regularize the problem by smoothing the nonsmooth component $\mathcal{N}$. As a consequence, the classical Karush-Kuhn-Tucker theory for stationarity (see, e.g., \cite{Zowe1979}) becomes available and solvers from (smooth) nonlinear programming, such as sequential quadratic programming \cite{NoceWrig06,PDEcon_Hinze}, may be employed. Indeed such an approach has also been for long used in order to derive limiting optimality conditions (under vanishing regularization) which unfortunately typically leads to stationarity systems containing less information when compared to the strong stationarity conditions as in \cite{barbu1984optimal, christof, mignot_puel}, obtained by using nonsmooth analysis techniques. In this work, we  show that in the case of ReLU learning-informed PDEs, additional issues can arise from a smoothing approach. In particular, due to a potentially large architecture of a network $\mathcal{N}$ (large number of layers and neurons), a natural and efficient way to smoothen $\mathcal{N}$  (after its training has been completed) would be via simply smoothing the ReLU function in $\mathcal{N}$, denoted now by $\sigma_{\epsilon}$, resulting in a smooth network $\mathcal{N}_{\epsilon}$ approximating $\mathcal{N}$. We refer to this technique as \emph{canonical smoothing} of $\mathcal{N}$. However we show with simple examples that this type of smoothing does not necessarily preserve monotonicity for deep enough networks, and in fact it does not even preserve it in a way  that monotonicity of the PDE operator could still be shown. This possibly renders the resulting control-to-state map $S_{\mathcal{N}_{\epsilon}}$ multi-valued, posing difficulties when resorting to classical algorithms for the solution of the regularized problem.
This is yet another motivation for us to devise numerical methods which are capable of directly solving \eqref{P_intro}. In this vein, we propose to adapt the bundle-free method from \cite{Hint_Suro}, originally developed for a class of mathematical programs with equilibrium constraints (MPECs). The proposed algorithm makes use of an auxiliary optimization problem as in \cite{Hint_Suro}, and we show that  by approximating the derivatives of the ReLU network (but not the ReLU itself!) via a smoothed $\max$-function, then a  descent direction for a reduced version of \eqref{P_intro} at a given control iterate is identified or (ideally) $B$-stationarity of that iterate can be diagnosed. 
We also mention that in \cite{christof}, an algorithm for solving a very specific nonsmooth semilinear PDE (in a first-discretize-then-optimize flavor) in the absence of control constraints has been proposed, where $\mathcal{N}(\cdot, y)=\max(0,y)$. However, as it was also noted by the authors of  \cite{christof} their algorithm cannot be applied to general nonsmooth semilinear PDEs, and an efficient algorithm for the general case calls for new ideas. The current paper aims to cover this gap.

\subsection{Structure of the paper} 
In Section \ref{sec:deep_learning} we focus on the structure of the functional form of ReLU networks. We are in particular interested in understanding how this structure changes after smoothing the ReLU network function via regularization of the associated activation function (canonical smoothing). Here our main focus is on how this kind of smoothing can break the monotonicity of the network. 
The implication of the latter concerning the emergence of nonuniqueness of solutions of the learning-informed state equation is discussed in Section \ref{sec:ReLU_PDE}. We also collect basic results concerning the general optimal control problem \eqref{P_intro} and in particular we recall  the  stationarity conditions derived in \cite{DonHinPapVoe22}. In Section \ref{sec:algorithm}, we introduce and analyze a descent algorithm that directly treats the nonsmooth optimal control problem. It is applied in Section \ref{sec:numerics} to several instances of an optimal control problem with a ReLU network-informed semilinear second-order elliptic PDE. In particular, also a nonmonotone setting is considered in order to challenge the solver.

\section{Smoothings of ReLU neural networks}\label{sec:deep_learning}

\subsection{Definition and basic properties}
We first fix some notation.
 For a set $A$, the characteristic and the indicator functions $\mathbbm{1}_{A}$ and $\mathcal{X}_{A}$, respectively, are defined as $\mathbbm{1}_{A}(x)=1$ if $x\in A$ and $\mathbbm{1}_{A}(x)=0$ otherwise, and $\mathcal{X}_{A}(x)=0$ if $x\in A$ and $\mathcal{X}_{A}(x)=+\infty$ otherwise. Unless otherwise stated $\langle \cdot, \cdot \rangle$ denotes the standard $L^{2}$ inner product.

\begin{definition}[Standard feedforward multilayer neural network]\label{def:neural_networks}
	Let $L\in \mathbb{N}$, network parameters $\theta=\left ((W_{1}, b_{1}), \ldots, (W_{L},b_{L} ) \right )$ with $W_{i}\in \mathbb{R}^{n_{i}\times n_{i-1}}$, $b_{i}\in \mathbb{R}^{n_{i}}$, for $i=1,\ldots, L$ and $n_{i}\in \mathbb{N}$ for $i=0,\ldots, L$. Furthermore let $\sigma:\mathbb{R}\to \mathbb{R}$ be an arbitrary function. We call a function $\mathcal{N}: \mathbb{R}^{n_{0}}\to \mathbb{R}^{n_{L}}$ a neural network with weight matrices $(W_{i})_{i=1}^{L}$, bias vectors $(b_{i})_{i=1}^{L}$ (the network parameters) and activation function $\sigma$ if $\mathcal{N}(x)$ can be defined through the following recursive relation for any $x\in \mathbb{R}^{n_{0}}$:
	\begin{align}
	z_{0}&= x,\label{defNN1}\\
	z_{\ell}&=\sigma \left (W_{\ell} z_{\ell-1} + b_{\ell} \right ), \quad \ell=1,\ldots,  L-1, \label{defNN2}\\
	\mathcal{N}(x)&= W_{L} z_{L-1} + b_{L}. \label{defNN3}
	\end{align}
	The action of the activation function $\sigma$ in \eqref{defNN2} is considered componentwise i.e.\ for a vector $y=(y^{1}, \ldots, y^{n})\in \mathbb{R}^{n}$ we set $\sigma(y):=(\sigma(y^{1}), \ldots, \sigma(y^{n}))$. More compactly, $\mathcal{N}$ can also be defined as 
	\begin{equation}\label{defNN1_comp}
	\mathcal{N}(x)= T_{L} \circ \sigma(T_{L-1}) \circ \cdots \circ \sigma(T_{2}) \circ \sigma (T_{1}(x)), \quad x\in \RR^{n_{0}},
	\end{equation}
	where for every $\ell=1,\ldots, L$, $T_{L}$ denotes the affine transformation $z\mapsto W_{\ell} z + b_{\ell}$.\\[0.5em]
	We call $\mathcal{N}$ a ReLU neural network if $\sigma$ is the ReLU (Rectified Linear Unit) activation function:
	\begin{equation}
	\sigma(t)=\max(t,0), \quad t\in \mathbb{R}.
	\end{equation}
	
\end{definition}

Following the standard neural network terminology, we say that a neural network defined as in \eqref{defNN1}--\eqref{defNN3}, has \emph{$L$ layers} and \emph{$L-1$ hidden layers}, with the latter denoting the operations in \eqref{defNN2}. The final operation \eqref{defNN3} is called the \emph{output layer}. Furthermore, $n_{i}$ is  the number of \emph{neurons} in the $i$-th layer, $i=1, \ldots, L$, that is, it is the number of rows of the weight matrix $W_{i}$. The number of neurons of a given layer is also called the \emph{width} of that layer, while the number of layers is called the \emph{depth} of the network.

We should note that a neural network as a function, does not necessarily admit a unique representation with respect to the weight matrices, the bias vectors and the activation functions.  Furthermore in the Definition \ref{def:neural_networks}, the input of the $\ell$-th layer consists only of the output $z_{\ell-1}$ of the previous layer. A more general neural network definition would allow the input for each layer to depend on the output of all the previous layers. In that case every $W_{\ell}$ would be a weight matrix of size $\mathbb{R}^{n_{i}\times (\sum_{k=0}^{\ell-1} n_{k})}$. However, since every network of the latter type can be realized by a network as in \eqref{def:neural_networks}, see  \cite{relu_Wsp}, we will stick to the more classical definition given above.

We are interested in the regularity of the functions that are realized by ReLU neural networks. It turns out that the latter class coincides with the class of \emph{continuous piecewise affine functions}.

\begin{definition}[Continuous piecewise affine functions]\label{def:cpwl}
	Let $n_{0}\in \mathbb{N}$. We say that a function $\mathcal{F}: \mathbb{R}^{n_{0}}\to \mathbb{R}$ is continuous piecewise affine (CPWA)  if 
	the following condition holds:
	\begin{itemize}
		\item $\mathcal{F}$ is continuous and there exist finitely many affine maps $f_{1}, \ldots, f_{p}: \mathbb{R}^{n_{0}}\to \mathbb{R}$ for some $p\in\mathbb{N}$ such that for every $x\in \mathbb{R}^{n_{0}}$, there exists an $i\in \{1,\ldots, p\}$ such that $\mathcal{F}(x)=f_{i}(x)$.
	\end{itemize}
\end{definition}

We refer to \cite{aliprantis_harris_tourky_2006, arora2018understanding,hinging_hyperplanes} for further equivalent characterizations of CPWA  functions.

\begin{theorem}[Characterization of ReLU neural networks, \cite{arora2018understanding}]
	A  function $\mathcal{N}:\mathbb{R}^{n_{0}}\to \mathbb{R}$ is a ReLU neural network if and only if it is a CPWA function.
\end{theorem}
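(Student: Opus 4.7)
I would prove the two implications separately.

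\emph{The ``only if'' direction (ReLU network $\Rightarrow$ CPWA).} I would proceed by induction on the number of layers $L$, using the compact form \eqref{defNN1_comp}. For $L=1$, $\mathcal{N} = T_1$ is affine and trivially CPWA. For the inductive step, assume every ReLU network of depth $L-1$ is CPWA (componentwise). Observe that (i) the ReLU $\sigma(t) = \max(t,0)$ is itself CPWA; (ii) the composition of a CPWA function with an affine map (on either side) is CPWA; and (iii) the composition of two CPWA functions is CPWA (since a finite refinement of the two polyhedral partitions gives a common partition on which the composition is affine piece-by-piece, and continuity is preserved). Combining these, $\mathcal{N}$, which is $T_L \circ \sigma \circ \mathcal{N}'$ for a ReLU subnetwork $\mathcal{N}'$ of depth $L-1$, is CPWA.

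\emph{The ``if'' direction (CPWA $\Rightarrow$ ReLU network).} This is the substantive part. I would reduce it to two ingredients:

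\begin{enumerate}
\item (Lattice representation of CPWA functions.) Every CPWA $\mathcal{F}:\RR^{n_0}\to\RR$ admits a representation of the form
\[
\mathcal{F}(x) \;=\; \max_{i=1,\ldots,r}\,\min_{j\in J_i} \ell_{ij}(x),
\]
with finitely many affine $\ell_{ij}$. I would invoke this classical fact (Ovchinnikov/Wang--Sun type result, alluded to in the references cited after Definition~\ref{def:cpwl}).
\item (Max and min via ReLU.) Using the identities $\max(a,b) = a + \sigma(b-a)$ and $\min(a,b) = -\max(-a,-b)$, a binary max or min of affine functions can be implemented by a single ReLU hidden layer followed by an affine output layer; iterating pairwise yields $\max$ and $\min$ over any finite family via a depth $O(\log(\#\text{arguments}))$ ReLU network.
\end{enumerate}
Combining (1) and (2), I would build the outer $\min$ blocks in parallel (stacking their weight matrices and biases into a single wide network whose layers are concatenated) and then feed their outputs into a max block, producing one ReLU network whose output equals $\mathcal{F}$. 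For vector-valued CPWA $\mathcal{N}:\RR^{n_0}\to\RR^{n_L}$, I would apply this componentwise and then stack the component networks in parallel into a single network with output dimension $n_L$.

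\emph{Main obstacle.} The cleanly technical point is the lattice representation in (1); its proof requires showing that given the polyhedral partition associated with $\mathcal{F}$, one can carve out each affine piece as an intersection of half-spaces and reassemble $\mathcal{F}$ via $\max$--$\min$ of the corresponding affine selections, together with a nontrivial argument that this $\max$--$\min$ function actually coincides with $\mathcal{F}$ on the whole of $\RR^{n_0}$ (not only on the ``correct'' cells). Since the excerpt cites \cite{arora2018understanding, hinging_hyperplanes, aliprantis_harris_tourky_2006} precisely for this kind of equivalent characterization, I would quote their statement rather than reprove it, and then focus the argument on the explicit ReLU realization in (2) and on the parallel/serial composition bookkeeping needed to fit the construction into the recursive template \eqref{defNN1}--\eqref{defNN3}.
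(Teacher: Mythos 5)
The paper does not prove this theorem at all: it is stated as a citation to \cite{arora2018understanding}, so there is no in-paper argument to compare against. Your sketch is a correct outline of the standard proof from that cited reference (closure of CPWA functions under composition for the forward direction; the lattice $\max$--$\min$ representation of CPWA functions plus the ReLU identities $\max(a,b)=a+\sigma(b-a)$, $t=\sigma(t)-\sigma(-t)$ for the converse), so it takes essentially the same route as the source the paper relies on.
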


From the definition \eqref{defNN1}--\eqref{defNN3} it is clear that $\mathcal{N}:\mathbb{R}^{n_{0}} \to \mathbb{R}^{n_{L}}$, $n_{L}\ge 1$, is an $\mathbb{R}^{n_{L}}$-valued ReLU neural network if and only if $\mathcal{N}=(\mathcal{N}_{1}, \ldots, \mathcal{N}_{L})$ with each $\mathcal{N}_{i}: \mathbb{R}^{n_{0}}\to \mathbb{R}$, $i=1, \ldots, L$ being a scalar-valued  ReLU neural network. Thus $\mathcal{N}$ is an $\mathbb{R}^{n_{L}}$-valued ReLU neural network if and only if it is an $\mathbb{R}^{n_{L}}$-valued CPWA  function, with the latter defined exactly as in Definition \ref{def:cpwl} with the only difference being that the affine maps $f_{i}$ are $\mathbb{R}^{n_{L}}$-valued.

To give an example, for $p\ge 2$ and $t_{1} \le \cdots \le t_{p-1}$, we consider the following one dimensional continuous piecewise affine  function $\mathcal{F}$ with 
\begin{equation}\label{cpwl_1d}
\mathcal{F}(t) =
\begin{cases}
a_{1} t + \gamma_{1} &\text{ if } t\le t_{1}, \\
a_{i} t + \gamma_{i} &\text{ if } t_{i-1} \le t \le  t_{i}, \quad  i=2, \ldots p-1,\\
a_{p} t + \gamma_{p} & \text{ if }  t \ge t_{p-1}.
\end{cases}
\end{equation}
Note that we assume that $(a_{i},\gamma_{i})_{i=1}^{p}$ satisfy the appropriate conditions such that $\mathcal{F}$ is continuous.
Then it can be checked, see for instance \cite[Corollary 3.5]{aliprantis_harris_tourky_2006}, that  $\mathcal{F}$ can be written as 
\begin{align}\label{cpwl_1d_relu}
\mathcal{F}(t)
&=a_{1}t + \gamma_{1} + \sum_{i=1}^{p-1} (a_{i+1}-a_{i}) \mathrm{max}(t-t_{i}, 0)\nonumber\\
&=a_{1}(\mathrm{max}(t,0)  -a_{1}\mathrm{max}(-t,0) +  \sum_{i=1}^{p-1} (a_{i+1}-a_{i}) \mathrm{max}(t-t_{i}, 0) + \gamma_{1}.
\end{align}
This means that $\mathcal{F}$ can be realized as a ReLU neural network with one hidden layer having $p+1$ neurons. In particular,  $\mathcal{F}= T_{2}\circ \sigma (T_{1})$, where $T_{1}(t)= W_{1}t + b_{1}$, $T_{2}(z)=W_{2}z +b_{2}$ with $W_{1}=(1,-1, 1, 1, \ldots, 1)^{T}\in \mathbb{R}^{(p+1)\times 1}$, $b_{1}=(0,0,-t_{1}\ldots, -t_{p-1})^{T}\in \mathbb{R}^{(p+1)\times 1}$, and $W_{2}=(a_{1}, -a_{1},a_{2}-a_{1}, \ldots, a_{p}-a_{p-1})\in \mathbb{R}^{1\times (p+1)}$, $b_{2}=\gamma_{1}\in \mathbb{R}$.

Another characteristic of ReLU neural networks are their approximation capabilities. In fact it can be easily checked that given a bounded domain $U\subset \mathbb{R}^{n_{0}}$ with Lipschitz boundary we have that for every $\epsilon>0$ and $f\in W^{1,\infty}(U)$ there exists a ReLU neural network $\mathcal{N}_{\epsilon}:\RR^{n_{0}} \to \RR$ such that $\|\mathcal{N}_{\epsilon}-f\|_{W^{1,\infty}(U)}<\epsilon$, see also \cite[Section 2.2]{DonHinPapVoe22}.
\vspace{1em}\noindent

\subsection{Smoothings of R{e}LU neural networks}\label{sec:smoothingReLU}
We are also interested in smoothing versions of ReLU networks. One canonical way to achieve smoothing is via appropriately smoothing the ReLU function $\sigma$ which is the constituent of the network that determines its regularity. In optimal control, typically specific approximating sequences are used \cite{christof, mignot_puel, Schiela} which we will also employ here.

\begin{definition}[Canonical smoothing of ReLU]
	We say that the family $\sigma_{\epsilon}: \mathbb{R}\to \mathbb{R}$ (or ReLU$_{\epsilon}$), $\epsilon>0$, is a canonical smoothing of the ReLU function if:
	\begin{enumerate}
		\item $\sigma_{\epsilon}$ is a positive, convex, monotonically increasing $C^{1}(\mathbb{R})$ function for all $\epsilon>0$.
		\item $\sigma_{\epsilon} \to \sigma $ uniformly and monotonically as $\epsilon \to 0$, i.e.,
			\[|\sigma_{\epsilon_{1}}(x)- \sigma(x)|\le |\sigma_{\epsilon_{2}}(x)- \sigma(x)|  \; \text{ for } \; 0< \epsilon_{1} \le \epsilon_{2}\;\text{ and for every } \; x\in \mathbb{R}.\]
	\end{enumerate}
	We say that a family of  networks $\mathcal{N}_{\epsilon}:\mathbb{R}^{n_{0}} \to \mathbb{R}^{n_{L}}  $, $\epsilon>0$, is a canonical smoothing of the ReLU network $\mathcal{N}:\mathbb{R}^{n_{0}} \to \mathbb{R}^{n_{L}} $ if it results from $\mathcal{N}$ by simply substituting the activation function $\sigma$ by $\sigma_{\epsilon}$.
\end{definition}

\begin{lemma}\label{lemma_can_smooth}
	Let $(\sigma_{\epsilon})_{\epsilon>0}$, be a canonical smoothing of the ReLU function. Then   the following two additional properties hold for $\epsilon>0$ small enough:
	\begin{enumerate}
		\item $0\le \sigma_{\epsilon}'(t)\le 1$, for all $t\in \mathbb{R}$.
		\item For every $\delta>0$, $\sigma_{\epsilon}'$ converges uniformly to $1$ on $[\delta, \infty)$ and uniformly to $0$ on $(-\infty, \delta]$ as $\epsilon\to 0$.
	\end{enumerate}
\end{lemma}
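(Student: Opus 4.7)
The plan is to exploit two structural features of the canonical smoothing $(\sigma_\epsilon)_{\epsilon>0}$: convexity, which together with $\sigma_\epsilon \in C^1(\mathbb{R})$ forces $\sigma_\epsilon'$ to be nondecreasing, and the uniform convergence $\sigma_\epsilon \to \sigma = \max(\cdot,0)$ on $\mathbb{R}$. Both parts of the lemma will reduce to controlling $\sigma_\epsilon'$ at a single point and then propagating that bound along a half-line via the monotonicity of $\sigma_\epsilon'$.

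For item (i), the lower bound $\sigma_\epsilon' \ge 0$ is immediate from the assumed monotonicity of $\sigma_\epsilon$. For the upper bound I would argue by contradiction: assume that $\sigma_\epsilon'(t_0) > 1$ for some $t_0 \in \mathbb{R}$, and set $\eta := \sigma_\epsilon'(t_0) - 1 > 0$. Since $\sigma_\epsilon'$ is nondecreasing, $\sigma_\epsilon'(s) \ge 1 + \eta$ for all $s \ge t_0$, and integrating gives
\[ \sigma_\epsilon(t) \ge \sigma_\epsilon(t_0) + (1+\eta)(t - t_0) \quad \text{for every } t \ge t_0. \]
For $t \ge \max(t_0, 0)$ one has $\sigma(t) = t$, hence
\[ \sigma_\epsilon(t) - \sigma(t) \ge \sigma_\epsilon(t_0) - (1+\eta) t_0 + \eta t \to +\infty \quad \text{as } t \to \infty, \]
which contradicts the fact that uniform convergence forces $\|\sigma_\epsilon - \sigma\|_{L^\infty(\mathbb{R})}$ to be finite (indeed arbitrarily small) for all sufficiently small $\epsilon$. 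Hence $\sigma_\epsilon' \le 1$ on $\mathbb{R}$ for such $\epsilon$.

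For item (ii), the bound from (i) combined with the monotonicity of $\sigma_\epsilon'$ reduces each uniform statement to a pointwise statement at the endpoint. On $[\delta,\infty)$ one has the sandwich $\sigma_\epsilon'(\delta) \le \sigma_\epsilon'(t) \le 1$, so it suffices to prove $\sigma_\epsilon'(\delta) \to 1$ as $\epsilon \to 0$. The supporting hyperplane inequality of a convex differentiable function, $\sigma_\epsilon(\delta/2) \ge \sigma_\epsilon(\delta) + \sigma_\epsilon'(\delta)(\delta/2 - \delta)$, rearranges to
\[ \sigma_\epsilon'(\delta) \ge \frac{2}{\delta}\bigl(\sigma_\epsilon(\delta) - \sigma_\epsilon(\delta/2)\bigr), \]
whose right-hand side tends to $\frac{2}{\delta}(\delta - \delta/2) = 1$ by the pointwise consequence of uniform convergence at the two points $\delta$ and $\delta/2$. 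The argument on the left half-line is symmetric: for $t \le -\delta$ one has $0 \le \sigma_\epsilon'(t) \le \sigma_\epsilon'(-\delta)$, and the analogous convexity inequality at $-\delta$ yields
\[ \sigma_\epsilon'(-\delta) \le \frac{2}{\delta}\bigl(\sigma_\epsilon(-\delta/2) - \sigma_\epsilon(-\delta)\bigr) \to 0, \]
since both endpoint values converge to $\sigma(-\delta/2) = \sigma(-\delta) = 0$.

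The only conceptually nontrivial step is (i): once the global bound $\sigma_\epsilon' \le 1$ is in place, (ii) is a routine consequence of convexity evaluated at two sample points together with the pointwise convergence of $\sigma_\epsilon$. The delicate aspect of (i) is that the contradiction genuinely requires uniform convergence on the whole real line rather than merely on bounded intervals, because the excess slope $\eta$ has to be accumulated by letting $t \to \infty$; a purely local bound on $\|\sigma_\epsilon - \sigma\|_\infty$ would not suffice.
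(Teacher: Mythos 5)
Your proof is correct and follows essentially the same route as the paper's: part (i) uses the monotonicity of $\sigma_\epsilon'$ (from convexity) to propagate an excess slope along a half-line until it contradicts uniform convergence, and part (ii) reduces the uniform statements to the endpoint via that same monotonicity and then controls $\sigma_\epsilon'(\pm\delta)$ using values of $\sigma_\epsilon$ at two nearby points. Your version of (ii) is merely a direct, quantitative rendering (via the supporting-line inequality at $\delta/2$ and $\delta$) of the contradiction argument the paper sketches using the points $0$ and $\delta$.
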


\begin{proof}
	Suppose that $(i)$ does not hold. Then because every $\sigma_{\epsilon}$ is convex and hence $\sigma_{\epsilon}'$ is incrceasing, there exists $\epsilon_{n}\to 0$ and $t_{n}\in \mathbb{R}$ such that $\sigma_{\epsilon_{n}}'(t)>1$ for every $t\in [t_{n}, \infty)$. But that means that for every $n\in\mathbb{N}$ there exists $t\in  [t_{n}, \infty)$ such that $\sigma_{\epsilon_{n}}(t)$ is arbitrary far away from $\sigma(t)$ contradicting the uniform convergence.
	
	For $(ii)$, we fix $\delta>0$. Using a similar argument as before we deduce that for every $\epsilon>0$ small enough it holds that $\lim_{t\to\infty} \sigma_{\epsilon}'(t)=1$. Hence, given the monotonicity of $\sigma_{\epsilon}'$ and $(i)$, it suffices to show that $\lim_{\epsilon\to 0}\sigma_{\epsilon}'(\delta)=1$. But if this is not the case it can easily be checked that there exists a subsequence $\sigma_{\epsilon_{n}}$ and $\eta>0$ such that $|\sigma_{\epsilon_{n}}(0)|>\eta$ contradicting the convergence $\lim_{\epsilon\to 0} \sigma_{\epsilon}(0)=0$. The uniform convergence of $\sigma_{\epsilon}'$ to $0$ on $(-\infty,\delta)$ is proved similarly.
\end{proof}

There are numerous options for a canonical smoothing of the ReLU function, see for instance Figure \ref{fig:canonical_smoothing}.
It is also clear that $\mathcal{N}_{\epsilon}\to \mathcal{N}$ uniformly but as we will show later with a counterexample the convergence does not have to be necessarily monotonic.

\begin{figure}[h!]
	%\hspace{-1.5em}
	\begin{minipage}[t]{0.4\textwidth}
		\centering
		\includegraphics[width=1.1\textwidth]{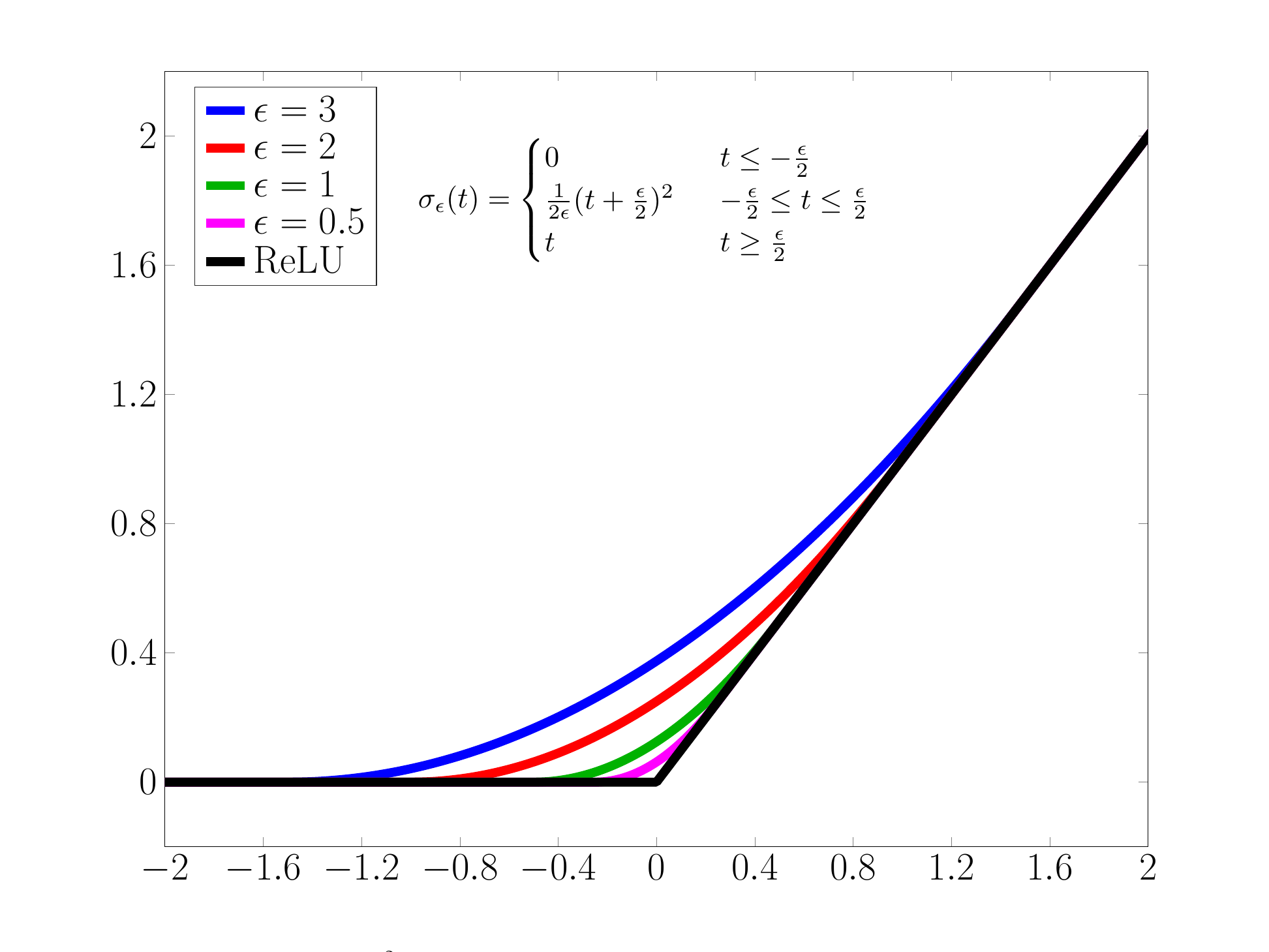}
	\end{minipage}
	\begin{minipage}[t]{0.4\textwidth}
		\centering
		\includegraphics[width=1.1\textwidth]{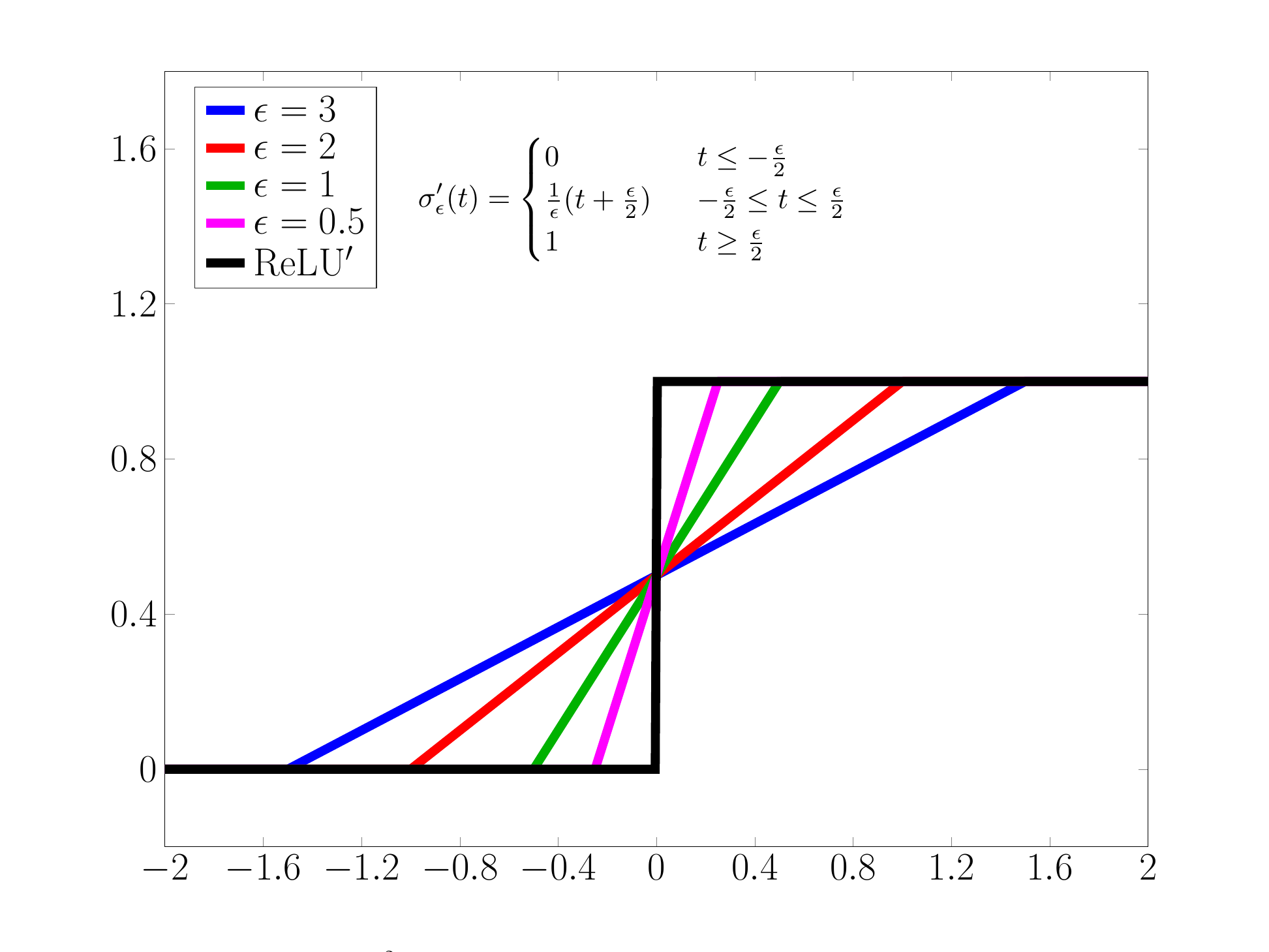}
	\end{minipage}
	%\hspace{1em}
	
	\begin{minipage}[t]{0.4\textwidth}
		\centering
		\includegraphics[width=1.1\textwidth]{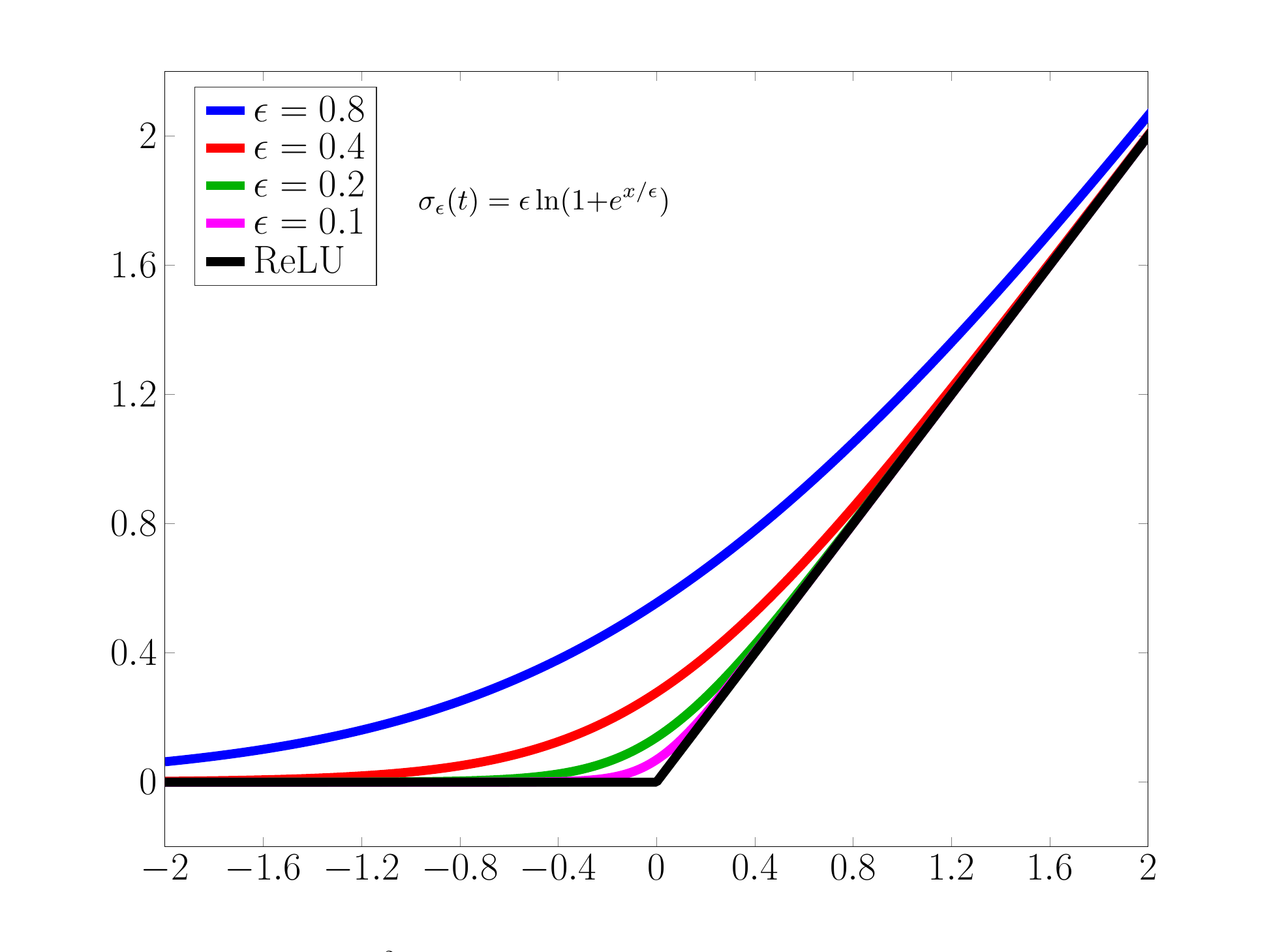}
	\end{minipage}
	\begin{minipage}[t]{0.4\textwidth}
		\centering
		\includegraphics[width=1.1\textwidth]{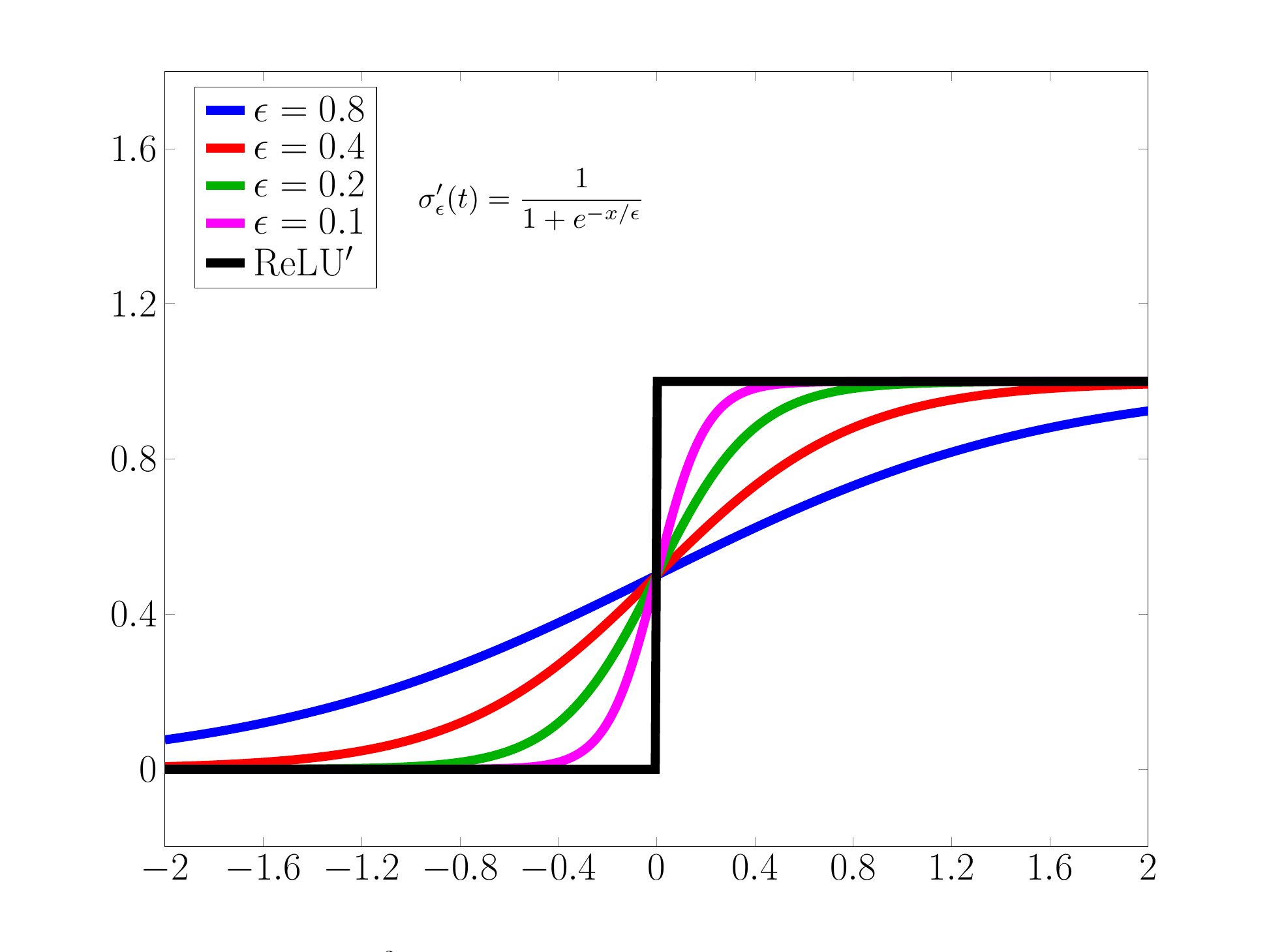}
	\end{minipage}		
	\caption{Examples of canonical smoothings of the ReLU functions together with depiction of the corresponding derivative approximations. In particular, the second one is the so-called \emph{Softplus} function, whose derivative is the logistic function - both extensively used in machine learning.}
	\label{fig:canonical_smoothing}
\end{figure}

Nevertheless the following holds:

\begin{proposition}\label{prop:approx_smoothing}
	Let $\mathcal{N}, (\mathcal{N}_{\epsilon})_{\epsilon>0}: \mathbb{R}^{n_{0}} \to \mathbb{R}^{n_{L}} $ be a ReLU network and a canonical smoothing of it. Then it holds:
	\begin{equation}\label{cs_uniform_values}
	\|\mathcal{N}_{\epsilon} - \mathcal{N}\|_{\infty} \le M \|\sigma_{\epsilon} - \sigma\|_{\infty},
	\end{equation}
	where the constant $M$ does not depend on $\epsilon$ but only on the parameters of $\mathcal{N}$. In particular,  $\mathcal{N}_{\epsilon}\to \mathcal{N}$ uniformly as $\epsilon \to 0$.
	
	Furthermore, for every $1\le p<\infty$ and for every open bounded $U\subset \mathbb{R}^{n_{0}}$ we have that
	\begin{equation}\label{cs_Lp_derivatives}
	\|\nabla \mathcal{N}_{\epsilon}-\nabla \mathcal{N}\|_{L^{p}(U)} \to 0, \quad \text{ as } \epsilon\to 0.
	\end{equation}
\end{proposition}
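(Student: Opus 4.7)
The plan is to establish both estimates by induction over the layer index, leveraging the recursive definition \eqref{defNN1}--\eqref{defNN3} together with the Lipschitz and boundedness properties of $\sigma_\epsilon$ and $\sigma_\epsilon'$ recorded in Lemma \ref{lemma_can_smooth}.

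For \eqref{cs_uniform_values}, let $z_\ell$ and $z_\ell^\epsilon$ denote the outputs of the $\ell$-th hidden layer of $\mathcal{N}$ and $\mathcal{N}_\epsilon$, respectively. By induction on $\ell = 1,\ldots,L-1$ I would show that
\[
\|z_\ell^\epsilon - z_\ell\|_\infty \le C_\ell\, \|\sigma_\epsilon - \sigma\|_\infty,
\]
with $C_\ell$ depending only on the norms of $W_1,\ldots,W_\ell$. The step $\ell-1 \to \ell$ uses the splitting
\[
\|z_\ell^\epsilon - z_\ell\|_\infty \le \|\sigma_\epsilon(W_\ell z_{\ell-1}^\epsilon + b_\ell) - \sigma_\epsilon(W_\ell z_{\ell-1} + b_\ell)\|_\infty + \|(\sigma_\epsilon - \sigma)(W_\ell z_{\ell-1} + b_\ell)\|_\infty,
\]
the $1$-Lipschitz property of $\sigma_\epsilon$ for small $\epsilon$ (Lemma \ref{lemma_can_smooth}(i)), and the induction hypothesis. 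Applying $W_L$ to $z_{L-1}^\epsilon - z_{L-1}$ in the output layer then yields \eqref{cs_uniform_values} with $M := \|W_L\|\, C_{L-1}$. As a by-product, the pre-activations $a_\ell^\epsilon := W_\ell z_{\ell-1}^\epsilon + b_\ell$ converge uniformly on $\mathbb{R}^{n_0}$ to $a_\ell := W_\ell z_{\ell-1} + b_\ell$ at the same rate.

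For \eqref{cs_Lp_derivatives}, since each $\sigma_\epsilon$ is $C^1$, the chain rule gives
\[
\nabla \mathcal{N}_\epsilon(x) = W_L\, D_{L-1}^\epsilon(x)\, W_{L-1} \cdots D_1^\epsilon(x)\, W_1, \qquad D_\ell^\epsilon(x) := \mathrm{diag}\bigl(\sigma_\epsilon'(a_\ell^\epsilon(x))\bigr),
\]
and the analogous identity holds for $\nabla \mathcal{N}$ almost everywhere on $U$, with the diagonal entries of $D_\ell(x)$ taking the values $0$ or $1$ according to the sign of the corresponding component of $a_\ell(x)$. Since $|\sigma_\epsilon'|\le 1$ by Lemma \ref{lemma_can_smooth}(i), the family $|\nabla \mathcal{N}_\epsilon|$ is uniformly bounded on $U$ by a constant depending only on the weight matrices, providing an integrable dominant on the bounded set $U$. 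It therefore suffices, by the dominated convergence theorem, to establish $\nabla \mathcal{N}_\epsilon \to \nabla \mathcal{N}$ pointwise almost everywhere on $U$.

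The main obstacle is precisely this pointwise a.e.\ convergence, because $\sigma'$ is discontinuous at $0$ and the pre-activations $a_\ell^\epsilon$ themselves depend on $\epsilon$. My plan is to restrict attention to the complement of $S := \bigcup_{\ell,j}\{x \in U : (a_\ell(x))_j = 0\}$, which is a finite union of zero sets of components of CPWA maps; after separating off any open cells on which some component vanishes identically (where the contribution of the corresponding neuron to the chain-rule product is frozen and can be verified directly from the uniform convergence of $a_\ell^\epsilon$), the remaining portion of $S$ is a finite union of lower-dimensional polyhedral pieces and thus has Lebesgue measure zero. For $x \notin S$, the uniform convergence $a_\ell^\epsilon \to a_\ell$ ensures that each scalar pre-activation is eventually bounded away from $0$ with a fixed sign, so Lemma \ref{lemma_can_smooth}(ii) yields $\sigma_\epsilon'(a_\ell^\epsilon(x)) \to \sigma'(a_\ell(x)) \in \{0,1\}$ for every $\ell$, and multiplying through the matrix product gives $\nabla \mathcal{N}_\epsilon(x) \to \nabla \mathcal{N}(x)$, closing the argument.
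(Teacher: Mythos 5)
Your proposal is correct and follows essentially the same route as the paper: the sup-norm bound \eqref{cs_uniform_values} via a layerwise induction using the $1$-Lipschitz property of $\sigma_{\epsilon}$ from Lemma \ref{lemma_can_smooth}(i), and the $L^{p}$ convergence \eqref{cs_Lp_derivatives} via dominated convergence plus almost-everywhere convergence of the chain-rule products, split into the case of nonvanishing pre-activations (Lemma \ref{lemma_can_smooth}(ii)) and the case of vanishing pre-activations handled through the boundedness $0\le\sigma_{\epsilon}'\le 1$ and the vanishing of the inner gradient. The only (cosmetic) difference is that you justify the measure-zero exceptional set by the polyhedral structure of CPWA zero sets, whereas the paper invokes the general fact that a Lipschitz function has vanishing gradient a.e.\ on its zero level set.
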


\begin{proof}
	In order to show \eqref{cs_uniform_values} we will show the result for networks with two hidden layers and then one can proceed via induction.
	Let $\mathcal{N}=T_{3}\Big ( \sigma \big ( T_{2} (\sigma (T_{1}))  \big) \Big )$ and $\mathcal{N}_{\epsilon}=T_{3}\Big ( \sigma_{\epsilon} \big ( T_{2} (\sigma_\epsilon (T_{1}))  \big) \Big )$ be a two hidden layer ReLU network and its corresponding canonical smoothing, in accordance to the formulation \eqref{defNN1_comp} (without loss of generality let $T_{3}$ be linear). Then, setting $N^{(2)}(x):=T_{2}(\sigma(T_{1}(x)))$ and  $N_{\epsilon}^{(2)}(x):=T_{2}(\sigma_{\epsilon}(T_{1}(x)))$, we estimate successively for  $x\in \mathbb{R}^{n_{0}}$ 
	\begin{align}
	|N_{\epsilon}^{(2)}(x)-N^{(2)}(x)|=
	|T_{2} (\sigma_{\epsilon} (T_{1}(x))) - T_{2} (\sigma (T_{1}(x)))|\le \|T_{2}\| |\sigma_{\epsilon} (T_{1}(x))-\sigma (T_{1}(x))|
	\le \|T_{2}\| \|\sigma_{\epsilon}-\sigma\|_{\infty}.\label{ind1}
	\end{align}
	We then further estimate
	\begin{align*}
	|\mathcal{N}_{\epsilon}(x)-\mathcal{N}(x)|
	&=\|T_{3}\| \left |(\sigma_{\epsilon}(N_{\epsilon}^{(2)}(x))- \sigma(N^{(2)}(x))) \right |\\
	&\le \|T_{3}\|\left ( \left | \sigma_{\epsilon} (N_{\epsilon}^{(2)}(x)) -\sigma_{\epsilon} (N^{(2)}(x)) \right | 
	+ \left| \sigma_{\epsilon} (N^{(2)}(x)) - \sigma (N^{(2)}(x)) \right | \right )\\
	&\le \|T_{3}\| (\|T_{2}\| \|\sigma_{\epsilon}-\sigma\|_{\infty} + \|\sigma_{\epsilon}-\sigma\|_{\infty})\\
	&\le M  \|\sigma_{\epsilon}-\sigma\|_{\infty},
	\end{align*}
	where we employed the mean value theorem for $\sigma_{\epsilon}$, using the fact that $0\le \sigma_{\epsilon}'\le 1$. The induction step follows similarly.
	
	For \eqref{cs_Lp_derivatives}, notice first that  $\mathcal{N}$ restricted to $U$ (a function that we still denote by $\mathcal{N}$) belongs to $W^{1,\infty}(U)$, being Lipschitz. In particular $\nabla \mathcal{N}:U\to \mathbb{R}^{n_{L}\times n_{0}}$ is a function in $L^{\infty}(U)$ and -- see \cite[Theorem III.1]{relu_chainrule} -- for almost every $x$ it is  equal to 
	\begin{equation}\label{relu_prime}
	\nabla \mathcal{N}(x)=W_{L}  \cdot \sigma'( N^{(L-1)}(x)) \cdot W_{L-1} \cdot \ldots \cdot \sigma'(N^{(1)}(x))\cdot W_{1},
	\end{equation}
	with $N^{(K)}$ defined as above and $\sigma':=\mathbbm{1}_{(0,\infty)}$ being applied pointwise. 
	Note that, while $\sigma'(N^{i}(x))$  is an $\RR^{n_{i}}$-vector, in \eqref{relu_prime}  using the same notation we denote the $n_{i} \times n_{i}$ diagonal matrix with the same vector in the diagonal.
	Analogously $\nabla \mathcal{N}_{\epsilon}\in C(\overline{U})$ where for every $x$
	\begin{equation}\label{relu_eps_prime}
	\nabla \mathcal{N}_{\epsilon}(x)=W_{L}  \cdot \sigma_{\epsilon}'( N_{\epsilon}^{(L-1)}(x)) \cdot W_{L-1} \cdot \ldots \cdot \sigma_{\epsilon}'(N_{\epsilon}^{(1)}(x))\cdot W_{1}.
	\end{equation}
	We check that $\nabla \mathcal{N}_{\epsilon}\to \nabla \mathcal{N}$ almost everywhere, and then \eqref{cs_Lp_derivatives} follows by employing the dominated convergence theorem using the fact that $\nabla \mathcal{N}_{\epsilon}$ is uniformly bounded in $L^{\infty}(U)$ since $0\le \sigma_{\epsilon}'\le 1$ for every $\epsilon>0$. In order to show the almost everywhere pointwise convergence of the gradients, in view of the recursive formulas \eqref{relu_prime} and \eqref{relu_eps_prime}, and considering an inductive argument it suffices to show that if $N:=(N_{1}, \ldots, N_{n}):\mathbb{R}^{m}\to \mathbb{R}^{n}$ is a ReLU network, $N_{\epsilon}$ is a canonical smoothing such that $\nabla N_{\epsilon}\to \nabla N$ almost everywhere as $\epsilon \to 0$ then also
	\begin{equation}\label{deriv_induct}
	\sigma_{\epsilon}' (N_{\epsilon}) \nabla N_{\epsilon} \to \sigma'(N) \nabla N, \quad \text{a.e. as } \epsilon \to 0.
	\end{equation}
	Let $\tilde{U}$ be the set of full measure where $\nabla N_{\epsilon}\to \nabla N$ converges pointwise.  Fixing an $1\le i \le n$, as a first case, let $x\in \tilde{U}$ be such that $N_{i}(x)\ne 0$. Then using $N_{i,\epsilon} (x) \to N_{i}(x)$ and Lemma \ref{lemma_can_smooth} $(ii)$, we get $\sigma'_{\epsilon}(N_{i,\epsilon}(x))\to \sigma'(N_{i}(x))$ and hence \eqref{deriv_induct} holds for that $x$ and the $i$-th row. Let now $x\in \tilde{U}$ such that $N_{i}(x)=0$. Since $N_{i}$ is Lipschitz then, see e.g. \cite[Theorem 3.3(i)]{EvansGariepy}, the set of such $x$ such that $\nabla_{i}N(x)\ne 0$ has a zero Lebesgue measure, so we can assume that $\nabla_{i} N(x)=0$. Then \eqref{deriv_induct} for the $i$-th row follows from the fact that $\nabla_{i} N_{\epsilon}(x)\to \nabla_{i} N(x)=0$ and the fact that $0\le \sigma'_{\epsilon} \le 1$.
\end{proof}

\begin{remark}\label{sigma_sigmaeps_le_eps}
	Looking at the examples of Figure \ref{fig:canonical_smoothing} one can see that $\sigma_{\epsilon}$ can be actually chosen such that
	\begin{equation}\label{sigma_uniform_values_stronger}
	\|\sigma_{\epsilon}-\sigma\|_{\infty}\le c\epsilon, \quad \text{for every }\epsilon>0,
	\end{equation}
	for some constant $c>0$. Then \eqref{cs_uniform_values} could be written in a stronger form as
	\begin{equation}\label{cs_uniform_values_stronger}
	\|\mathcal{N}_{\epsilon}-\mathcal{N}\|_{\infty}\le M\epsilon, \quad \text{for every }\epsilon>0.
	\end{equation}
	
\end{remark}

Since our focus here is on ReLU learning-informed PDEs, that is, PDEs that contain a ReLU neural network, we are particularly interested in monotonically increasing networks. As we will see later, they will guarantee uniqueness for the corresponding PDE. In particular we are interested in whether the monotonicity of the ReLU networks can be preserved under canonical smoothing, or not. If the latter is the case, then we study to which extent the resulting nonmonotone part can be controlled by the smoothing parameter $\epsilon>0$. In what follows we will always make a distinction between monotonically increasing and strictly monotonically increasing functions.
The following proposition sheds some light on this context.

\begin{proposition}\label{prop:relu_smoothing}
	The following are true:
	\begin{enumerate}
		\item There exists a canonical smoothing $\tilde{\sigma}_{\epsilon}$ of the ReLU function such that for every (strictly) monotonically increasing one-hidden layer ReLU network $\mathcal{N}:\mathbb{R}\to \mathbb{R}$, its corresponding canonical smoothing $\mathcal{N}_{\epsilon}$ under $\tilde{\sigma}_{\epsilon}$ is also (strictly) monotone for every $\epsilon>0$. However preservation of monotonicity of one-hidden layer ReLU networks does not necessarily hold for an arbitrary canonical smoothing.
		\item The property of the above canonical smoothing $\tilde{\sigma}_{\epsilon}$ does not hold for ReLU networks with more than one hidden layers. That is, there exists a monotone increasing two-hidden layer ReLU network such that its canonical smoothing $\mathcal{N}_{\epsilon}$ under $\tilde{\sigma}_{\epsilon}$ is not monotonically increasing for every $\epsilon>0$.
	\end{enumerate}
\end{proposition}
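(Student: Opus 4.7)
For both parts the key is an algebraic identity special to the softplus function and its behaviour under composition.

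For the positive part of (i), I plan to take $\tilde{\sigma}_\epsilon(t)=\epsilon\log(1+e^{t/\epsilon})$, for which the identity $\tilde{\sigma}_\epsilon(t)-\tilde{\sigma}_\epsilon(-t)=t$ holds exactly. Given a monotone one-hidden-layer network $\mathcal{N}$, by positive homogeneity of $\sigma$ I would first pass to a representation with first-layer weights in $\{-1,+1\}$, as in formula (2.7), writing $\mathcal{N}(t)=\sum_i \alpha_i\sigma(t-s_i)+\sum_j \beta_j\sigma(-t+r_j)+d$. Applying the softplus identity to each ``negative-orientation'' term yields the regrouping $\mathcal{N}_\epsilon(t)=\sum_k \gamma_k\tilde{\sigma}_\epsilon(t-\tau_k)+\mu t+\nu$. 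Since $\tilde{\sigma}_\epsilon'(u)=\int_{\mathbb{R}} \mathbbm{1}_{(0,\infty)}(u-s)\rho_\epsilon(s)\,ds$ with $\rho_\epsilon=\tilde{\sigma}_\epsilon''\ge0$ the logistic density of unit mass, differentiating gives the convolution identity $\mathcal{N}_\epsilon'(t)=(\mathcal{N}'*\rho_\epsilon)(t)$, which is non-negative because $\mathcal{N}'\ge0$ almost everywhere by monotonicity. Strict monotonicity is transferred through strict positivity of $\rho_\epsilon$ on an open interval together with $\mathcal{N}'>0$ on a set of positive measure.

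For the failure under an arbitrary canonical smoothing, I would exhibit a smoothing that violates the softplus identity, e.g.\ the one-sided smoothing defined by $\tilde{\sigma}_\epsilon(t)=0$ on $(-\infty,0]$, $t^2/(2\epsilon)$ on $[0,\epsilon]$, $t-\epsilon/2$ on $[\epsilon,\infty)$, and the specific monotone network $\mathcal{N}(t)=-\sigma(t)+2\sigma(t-1)-2\sigma(-t)-\sigma(-t+1)$ whose slopes on the three pieces $t<0$, $0<t<1$, $t>1$ are $3,0,1$ respectively. A direct evaluation at $t=1-\epsilon/2$ then yields $\mathcal{N}_\epsilon'(t)=-1+0+0+\tfrac{1}{2}=-\tfrac{1}{2}<0$.

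For (ii), I would exhibit the two-hidden-layer network $\mathcal{N}(t)=\sigma(-\sigma(t)-\sigma(-t))$. Since $\sigma(t)+\sigma(-t)=|t|\ge0$, the input to the outer ReLU is non-positive and $\mathcal{N}\equiv 0$, which is (non-strictly) monotone. Under the softplus $\tilde{\sigma}_\epsilon$ of part (i), one computes $\tilde{\sigma}_\epsilon(t)+\tilde{\sigma}_\epsilon(-t)=\epsilon\log(2+2\cosh(t/\epsilon))$, a strictly positive even function of $t$ with unique minimum at $t=0$. Composing with the monotone increasing $\tilde{\sigma}_\epsilon$ produces an even, strictly positive $\mathcal{N}_\epsilon$ with a strict maximum at $t=0$ decaying to $0$ at $\pm\infty$, hence not monotone on any interval containing the origin.

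The main obstacle is the rewriting step in (i): two different representations of the same ReLU function induce different canonical smoothings, so one has to align the positive-homogeneity reduction to first-layer weights $\{-1,+1\}$ with the softplus identity before the convolution identity becomes available. Once this alignment is in place, non-negativity of the convolution is immediate. The counterexamples in the negative statements then proceed by deliberately breaking this alignment---dropping the softplus identity for arbitrary canonical smoothings in (i), and exploiting composition to create an even combination of activations inside the outer ReLU in (ii).
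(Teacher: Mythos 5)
Your two counterexamples are correct and run parallel to the paper's own, with different explicit networks: for the failure statement in (i) the paper takes the zero network $\max(\lambda_1t,0)+\max(\lambda_2t,0)-\max(\lambda_3t,0)-\max(\lambda_4t,0)$ with $\sum_i\lambda_i$ balanced but $\sum_i\lambda_i^2$ not, whereas you take a network with first-layer weights of modulus one and a flat middle piece (your evaluation $\mathcal{N}_\epsilon'(1-\epsilon/2)=-1/2$ checks out, and your example is arguably more informative since it shows the failure is not merely a weight-scaling artifact); for (ii) the paper uses $\max(-\max(t,0),0)$, whose smoothing becomes monotonically \emph{decreasing}, while your $\sigma(-\sigma(t)-\sigma(-t))$ produces an even positive bump via $\tilde{\sigma}_{\epsilon}(t)+\tilde{\sigma}_{\epsilon}(-t)=\epsilon\log\left(2+2\cosh(t/\epsilon)\right)$ --- both conclusions are valid.

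The genuine gap is in the positive half of (i), and it is precisely the obstacle you name but do not overcome. The canonical smoothing is defined by substituting $\sigma\mapsto\tilde{\sigma}_{\epsilon}$ in the \emph{given} weights, so it is not invariant under re-representation of the network: for $w>0$ one has $\tilde{\sigma}_{\epsilon}(wt+b)=w\,\tilde{\sigma}_{\epsilon/w}(t+b/w)$, \emph{not} $w\,\tilde{\sigma}_{\epsilon}(t+b/w)$. Hence after your positive-homogeneity reduction each hidden unit is effectively smoothed at its own parameter $\epsilon/|w_1^i|$, and the convolution identity $\mathcal{N}_\epsilon'=\mathcal{N}'\ast\rho_\epsilon$ holds only for the re-represented network, i.e.\ only when all first-layer weights already have modulus one as in \eqref{cpwl_1d_relu}. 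For general weights the unrestricted claim fails even for softplus: take $\mathcal{N}(t)=2\max(5t,0)-\max(9t,0)-\max(t,0)\equiv 0$, which is monotone; its softplus smoothing satisfies $\mathcal{N}_\epsilon'(0)=(10-9-1)\tilde{\sigma}_{\epsilon}'(0)=0$ and $\mathcal{N}_\epsilon''(0)=(50-81-1)\tilde{\sigma}_{\epsilon}''(0)<0$, so $\mathcal{N}_\epsilon'<0$ just to the right of the origin and $\mathcal{N}_\epsilon$ is not monotone. I note that the paper's own proof of this half identifies $\sum_i w_2^i(\rho_\epsilon\ast\sigma)(w_1^it+b_1^i)$ with $\rho_\epsilon\ast\mathcal{N}$ ``by linearity of convolution'' and thereby commits the same scaling conflation, so you are not worse off than the source; but your argument, like the paper's, only establishes the statement for representations with unit-modulus first-layer weights, and that restriction must be made explicit rather than bridged by the homogeneity rewriting.
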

\begin{proof}
	For $(i)$ it suffices to define $\tilde{\sigma}_{\epsilon}=\rho_{\epsilon}\ast \sigma$, where $\rho_{\epsilon}(t)=\epsilon^{-1} \rho(t/\epsilon)$ and $\rho$ being the standard mollifier, 
	\[\rho(t)=ce^{-\frac{1}{1-t^{2}}}.\]
	Here $c>0$ is a constant such that $\int_{\mathbb{R}} \rho\,dx=1$. It is easy to check that $\sigma_{\epsilon}$ is a canonical smoothing. Let $\mathcal{N}$ be an one hidden layer ReLU network, that is
	\begin{align*}
	\mathcal{N}(t)
	&=b_{2} + W_{2} \sigma (W_{1}t + b_{1}) \\
	&=b_{2} + \sum_{i=1}^{n_{1}} w_{2}^{i} \sigma (w_{1}^{i} t + b_{1}^{i}), 
	\end{align*}
	where $W_{1}= (w_{1}^{1}, \ldots, w_{1}^{n_{1}})^{T}$, $W_{2}= (w_{2}^{1}, \ldots, w_{2}^{n_{1}})$, $b_{1}=(b_{1}, \ldots, b_{n_{1}})^{T}$ and $b_{2}\in \mathbb{R}$. Then from the linearity of convolution we have
	\begin{align*}
	\mathcal{N}_{\epsilon}(t)
	&:= b_{2} + \sum_{i=1}^{n_{1}} w_{2}^{i} \rho_{\epsilon}\ast \sigma (w_{1}^{i} t + b_{1}^{i})
	=  \rho_{\epsilon}\ast \left ( b_{2} + \sum_{i=1}^{n_{1}} w_{2}^{i} \sigma (w_{1}^{i} t + b_{1}^{i}) \right)=\rho_{\epsilon}\ast \mathcal{N}(t).
	\end{align*}
	Hence if $\mathcal{N}$ is (strictly) monotone then $\mathcal{N}_{\epsilon}$ is (strictly) monotone as well, since it is immediate to check that this convolution preserves (strict) monotonicity.
	
	In order to see that the above property does not hold for an arbitrary canonical smoothing, consider for instance the canonical smoothing of the first example of Figure \ref{fig:canonical_smoothing}. Let $\mathcal{N}$ be the ReLU neural network defined as
	\[\mathcal{N}(t)=\mathrm{max}(\lambda_{1}t,0) + \mathrm{max}(\lambda_{2}t,0) -\mathrm{max}(\lambda_{3}t,0) - \mathrm{max}(\lambda_{4}t,0), \]
	where $\lambda_{1}, \lambda_{2}, \lambda_{3}, \lambda_{4}>0$ and $\lambda_{1}+\lambda_{2}=\lambda_{3}+\lambda_{4}$ . This network has one hidden layer with 4 neurons and  obviously, $\mathcal{N}\equiv 0$ and is hence monotone. Given $\epsilon>0$, we have that for every $t\in [-\epsilon/2\lambda_{\max}, \epsilon/2\lambda_{\max}]$, with $\lambda_{\max}:=\mathrm{max}_{i} \lambda_{i}$ that
	\begin{equation}\label{sum_squares}
	\mathcal{N}_{\epsilon}'(t)=(\lambda_{1}^{2}+ \lambda_{2}^{2} - \lambda_{3}^{2}-\lambda_{4}^{2})\frac{t}{\epsilon}.
	\end{equation}
	Then by simply choosing $\lambda_{i}$ such that the specific linear combination of their squares in \eqref{sum_squares} is not zero, we get that the derivative of $\mathcal{N}_{\epsilon}$ changes sign in a small neighbourhood of the origin and thus implies nonmonotonicity.
	
	In order to produce a counterexample for $(ii)$ consider 
	\[\mathcal{M}(t)=\mathrm{max} (-\mathrm{max}(t,0),0)=\sigma(-\sigma(t)),\] 
	which is a two-hidden layer ReLU neural network realizing again the zero function, and let $\mathcal{M}_{\epsilon}(t)=\tilde{\sigma}_{\epsilon}(-\tilde{\sigma}_{\epsilon}(t))$ denote its canonical smoothing under $\tilde{\sigma}_{\epsilon}=\rho_{\epsilon}\ast \sigma$. Note that 
	\[
	( \rho_{\epsilon}\ast \sigma)(t)=
	\begin{cases}
	0 & \text{ if } t\le -\epsilon,\\
	\frac{1}{\epsilon} \int_{B(t,\epsilon)} \rho \left (\frac{t-s}{\epsilon} \right )\sigma (s)\,ds& \text{ if } -\epsilon<t<\epsilon,\\
	t & \text{ if } t\ge \epsilon.
	\end{cases}
	\]
	For the derivative of $\mathcal{M}_{\epsilon}$, it obviously holds that $\mathcal{M}_{\epsilon}'(t)=-\tilde{\sigma}_{\epsilon}'(-\tilde{\sigma}_{\epsilon}(t))\tilde{\sigma}_{\epsilon}'(t)\le 0$. Furthermore, for $t\ge \epsilon$, we have $\mathcal{M}_{\epsilon}=0$, while for $t\le -\epsilon$ we have $\mathcal{M}_{\epsilon}(t)=\sigma_{\epsilon}(0)>0$. Hence $\mathcal{M}_{\epsilon}$ is monotonically decreasing.
\end{proof}

\begin{figure}[h!]
	\hspace{-1.5em}
	\begin{minipage}[t]{0.32\textwidth}
		\centering
		\includegraphics[width=1.15\textwidth]{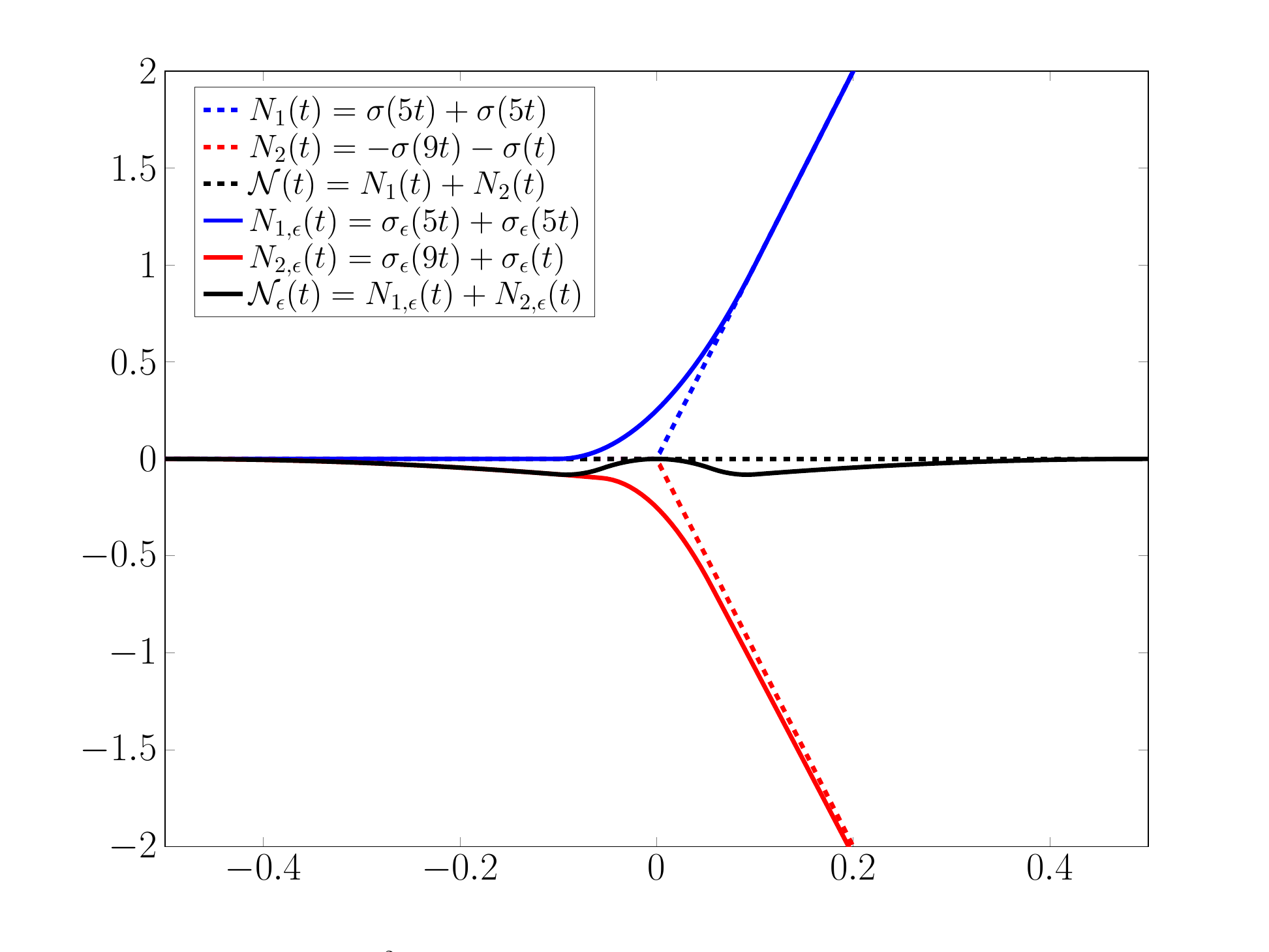}
	\end{minipage}
	\begin{minipage}[t]{0.32\textwidth}
		\centering
		\includegraphics[width=1.15\textwidth]{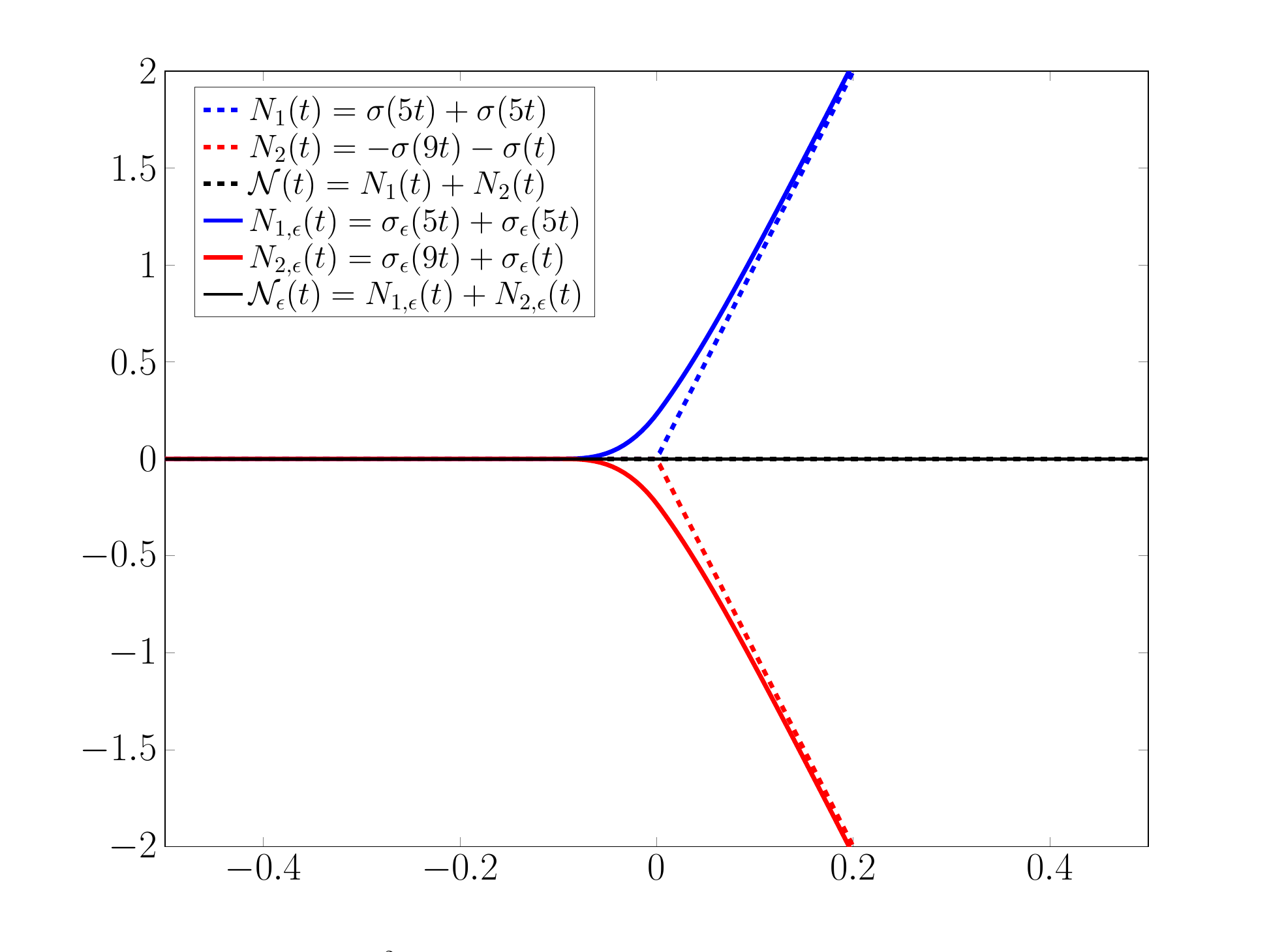}
	\end{minipage}
	
	\hspace{-3em}
	\begin{minipage}[t]{0.32\textwidth}
		\centering
		\includegraphics[width=1.15\textwidth]{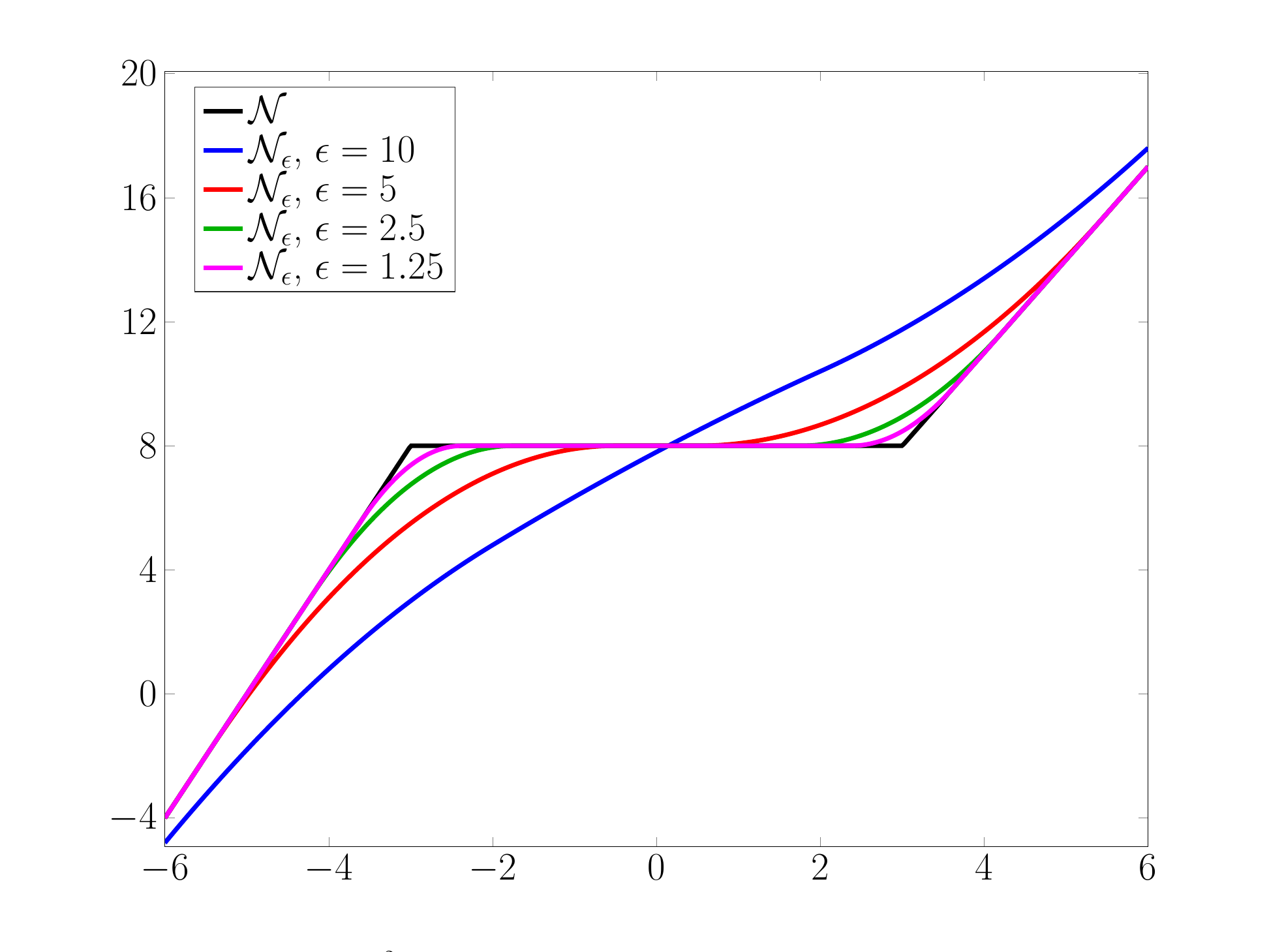}
	\end{minipage}
	\begin{minipage}[t]{0.32\textwidth}
		\centering
		\includegraphics[width=1.15\textwidth]{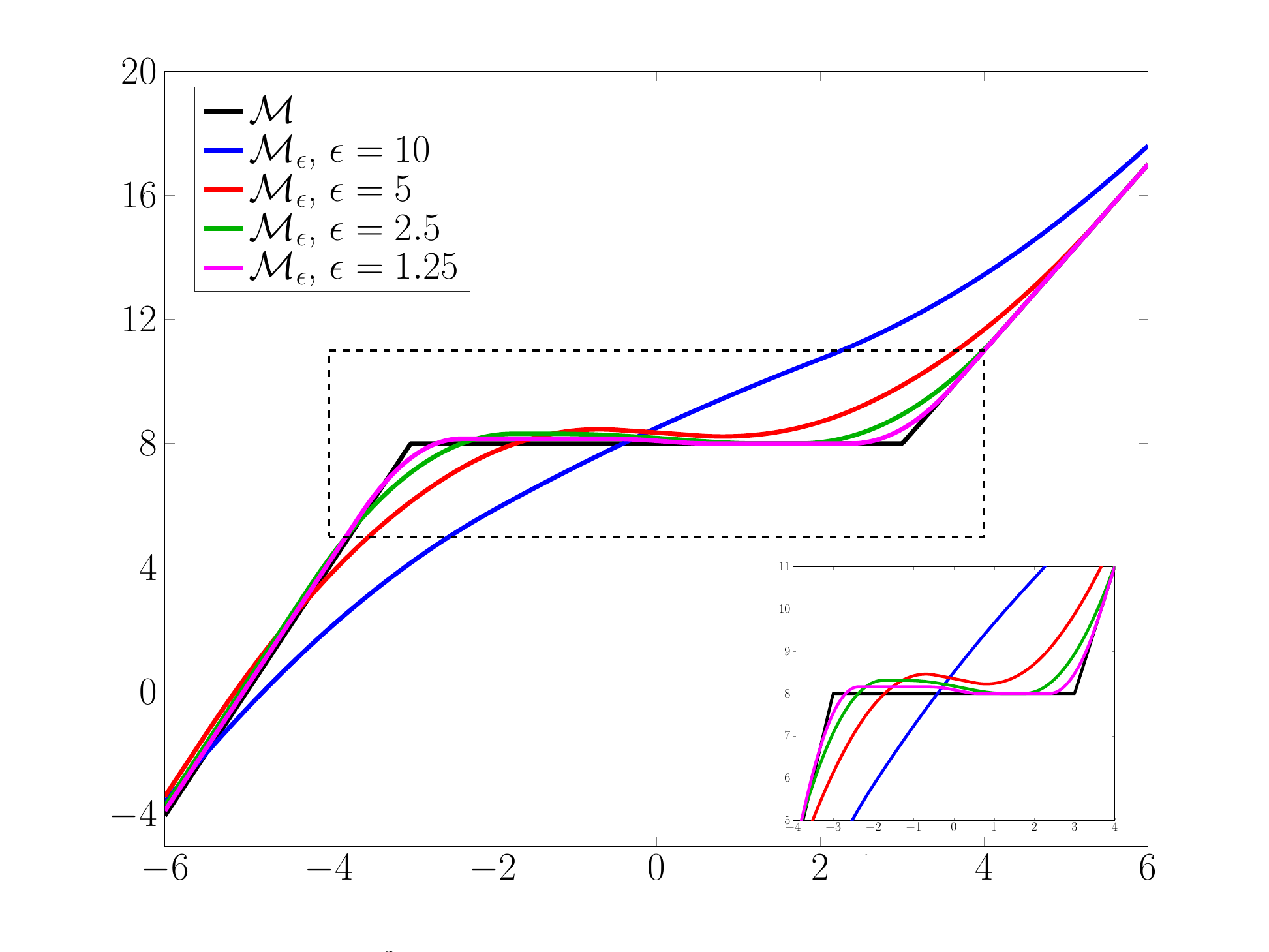}
	\end{minipage}
	\begin{minipage}[t]{0.32\textwidth}
		\centering
		\includegraphics[width=1.15\textwidth]{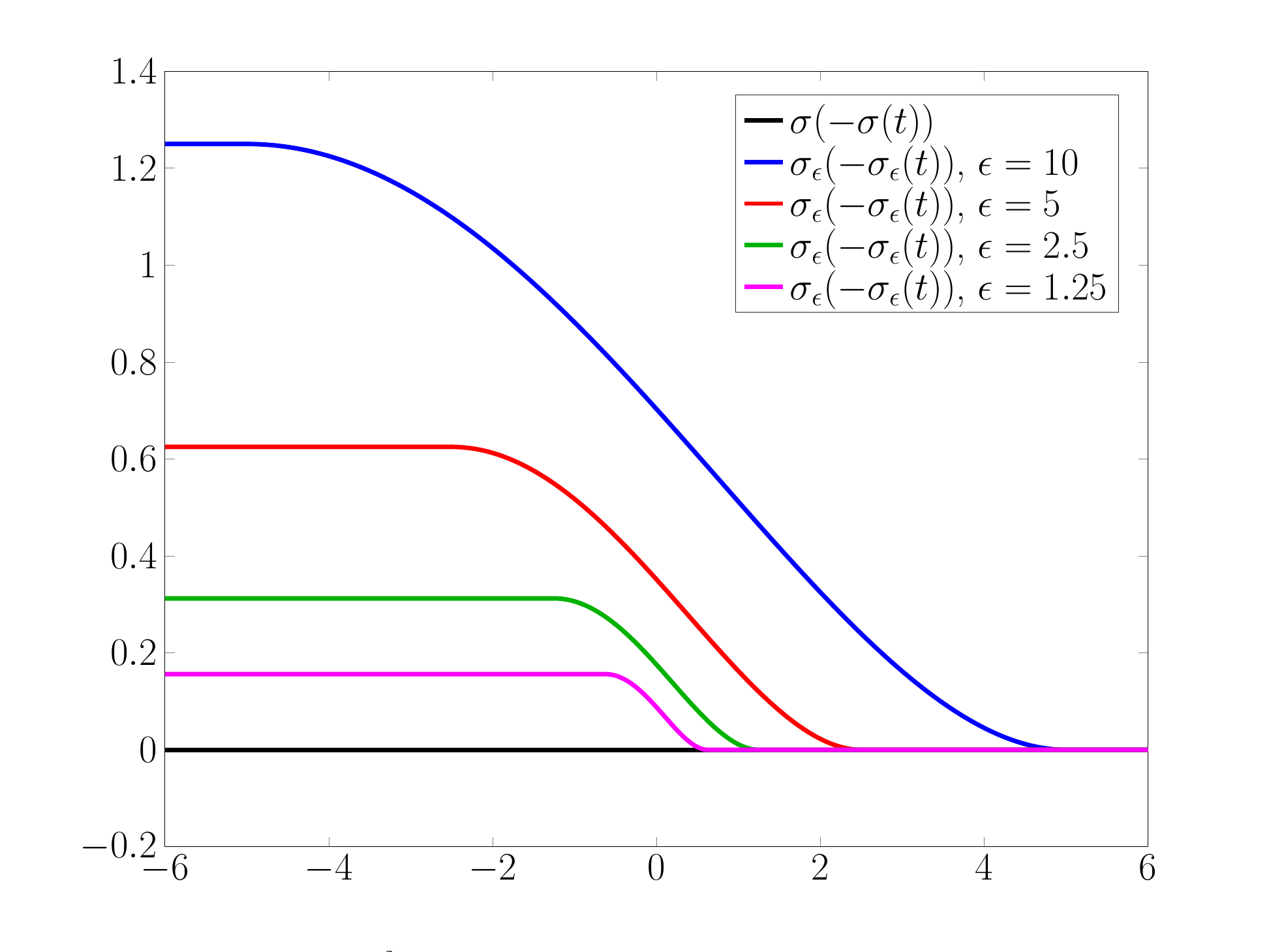}
	\end{minipage}
	\caption{Examples of non preservation of monotonicity of ReLU networks after canonical smoothing as these are dictated by Proposition \ref{prop:relu_smoothing}.}
	\label{fig:counterexamples_smoothing}
\end{figure}

In Figure \ref{fig:counterexamples_smoothing} we see a visualization of the examples given in Proposition \ref{prop:relu_smoothing}. At the top part of the figure,  we provide an example corresponding to $(i)$ of Proposition \ref{prop:relu_smoothing}. There, the zero function is written as a one hidden layer ReLU network, $\mathcal{N}(t)=\mathrm{max}(5t)+\mathrm{max}(5t)-\mathrm{max}(9t,0)-\mathrm{max}(t,0)$ (dashed black line). Using the canonical smoothing of the first example of Figure \ref{prop:relu_smoothing} the monotonicity is not preserved  (top left,  black solid line). On the other hand the linear smoothing $\tilde{\sigma}_{\epsilon}=\rho_{\epsilon}\ast \sigma$ preserves the monotonicity, see top right plot. At the  bottom part of Figure \ref{fig:counterexamples_smoothing} we have expressed the same monotone increasing CPWA function as a ReLU network of both one and two hidden layers, $\mathcal{N}$ and $\mathcal{M}$ respectively,
\begin{align*}
\mathcal{N}(t)&=4 \mathrm{max}(t,0)-4\mathrm{max}(-t,0) + 20 - 4\mathrm{max}(t+3,0) +3\mathrm{max}(t-3,0) ,\\
\mathcal{M}(t)&=\mathcal{N}(t)+ \mathrm{\max}(-\mathrm{\max}(t,0),0),
\end{align*}
where $\mathcal{M}$ trivially results by adding the zero function $\mathrm{\max}(-\mathrm{\max}(t,0),0)$ to $\mathcal{N}$. Nevertheless their canonical smoothings $\mathcal{N}_{\epsilon}$ and $\mathcal{M}_{\epsilon}$ under the same $\sigma_{\epsilon}$ look rather different. We note that here  we depict this for the canonical smoothing of the first example of Figure \ref{prop:relu_smoothing} but the differences are similar for  $\tilde{\sigma}_{\epsilon}$. The canonical smoothing  $\mathcal{N}_{\epsilon}$  of the one hidden layer network remains monotonically increasing ( bottom left), which is not the case for the two hidden layer network $\mathcal{M}_{\epsilon}$ (bottom middle). This is due to the term $ \mathrm{\max}(-\mathrm{\max}(t,0),0)$ whose canonical smoothing introduces a decreasing part near the origin, see bottom right plot of Figure \ref{fig:counterexamples_smoothing}.

\begin{remark}\label{rem:negative_grad}
	Note that even though canonical smoothings do not necessarily preserve monotonicity -- in particular as we saw,  if $\mathcal{N}$ is increasing, $\mathcal{N}_{\epsilon}$ does not have to be increasing as well -- nevertheless the negative part of the derivative $(\mathcal{N}_{\epsilon}')^{-}:= \mathrm{max}(- \mathcal{N}_{\epsilon}', 0)$ can be controlled. Specifically, according to Proposition \ref{prop:approx_smoothing}, if $\mathcal{N}:\mathbb{R}\to \mathbb{R}$ is a monotonically increasing ReLU network -- in particular $(\mathcal{N}')^{-}=0$, then given an open bounded $U\subset \mathbb{R}$, and $1\le p<\infty$,  we have for every canonical smoothing $\mathcal{N}_{\epsilon}$ that
	\begin{equation}\label{negative_part_grad}
	\|(\mathcal{N}_{\epsilon}')^{-}\|_{L^{p}(U)} \to 0, \quad \text{ as } \epsilon\to 0.
	\end{equation}
\end{remark}

\section{Basic facts  of the optimal control problem and implications of smoothing }\label{sec:ReLU_PDE}

Recall the main learning-informed optimal control problem:
\begin{equation}\label{P}\tag{$P_{\mathcal{N}}$}
\begin{aligned}
&\text{minimize }\quad J(y,u):= \frac{1}{2} \|y-g\|_{L^{2}(\Omega)}^{2} + \frac{\alpha}{2} \|u\|_{L^{2}(\om)}^{2},\quad \text{ over } (y,u)\in H_{0}^{1}(\om) \times L^{2}(\om),\\
&\text{subject to }\left \{
\begin{aligned}
-\Delta y + \mathcal{N}(\cdot, y)&=u, \;\; \text{ in }\Omega,\\
y&=0, \;\; \text{ on }\partial \Omega,
\end{aligned}
\right. \quad \text{ and }\quad  u\in \mathcal{C}_{ad},
\end{aligned}
\end{equation}
where the different constituents are defined in the introduction. We mention that in \cite{DonHinPapVoe22} a  more general setting was adopted by considering a function  $f$  instead of $\mathcal{N}$, belonging to a slightly larger family than the one defined by ReLU neural networks, with the main characteristic that  $y\mapsto f(x,y)$ is directionally differentiable.
Here, we note that $\mathcal{N}$ is additionally Hadamard directionally differentiable with respect to the second variable. Using the chain rule for Hadamard directionally differentiable functions we can state a recursive formula for $\mathcal{N}_{x}'(y;h)$, where for every $x\in\RR^{d}$
\[\mathcal{N}_{x}'(y;h):= \lim_{t_{n}\to 0^{+}} \frac{\mathcal{N}(x,y+t_{n}h)-\mathcal{N}(x,y)}{t_{n}}.\] 
Indeed, for $z:=(x,y)$, and for $N^{(2)}(z)=W_{2}\cdot \sigma(W_{1} z +b_{1}) + b_{2}$, $W_{2}\in \RR^{1\times n_{1}}$, $W_{1}\in \RR^{n_{1}\times (d+1)}$, $b_{1}\in \RR^{n_{1}}$, $b_{2}\in\RR$, we have that for any $y,h\in \RR$
\begin{align}\label{direct_deriv_N2}
(N^{(2)})_{x}'(y;h)=W_{2}\cdot \left ( \mathbbm{1}_{(0,\infty)}(W_{1}z+b_{1}) W_{1}(:,n_{0})h 
+  \mathbbm{1}_{\{0\}}(W_{1}z+b_{1})  \mathrm{max}(0,W_{1}(:,n_{0})h) \right ). 
\end{align}
Here $ W_{1}(:,n_{0})$ denotes the last column of $W_{1}$, and  $\mathbbm{1}_{(0,\infty)}(W_{1}z+b_{1})$ is a diagonal matrix, whose diagonal consists of the vector resulting from the componentwise action  of the function $\mathbbm{1}_{(0,\infty)}(\cdot)$ on the vector $W_{1}z+b_{1}$ -- similarly for the second summand in \eqref{direct_deriv_N2}. Recursively for $N^{(\ell)}=W_{\ell}\sigma(N^{\ell-1}(z)) + b_{\ell}$ we have
\begin{align}\label{direct_deriv_N_ell}
(N^{(\ell)})_{x}'(y;h)=W_{\ell} \cdot \left ( \mathbbm{1}_{(0,\infty)}(N^{(\ell-1)}(z)) (N^{(\ell-1)})_{x}'(y;h) 
+ \mathbbm{1}_{\{0\}} (N^{(\ell-1)}(z)) \max(0,(N^{(\ell-1)})_{x}'(y;h))  \right ).
\end{align}
Comparing the formulas \eqref{direct_deriv_N2}--\eqref{direct_deriv_N_ell} with the formula \eqref{relu_prime} for the weak gradient of $\mathcal{N}$, one notes that while \eqref{relu_prime} holds almost everywhere, the formulas for the directional derivatives hold at every point.

We will also make use of the function space 
\[Y:=\{y\in H_{0}^{1}(\om):\; \Delta y \in L^{2}(\om) \},\]
which is a separable Hilbert space equipped with the inner product $(y,v)_{Y}:= \int_{\om} \Delta y\Delta v + \nabla y \nabla v + yv\, dx$ and it is compactly embedded in $H_{0}^{1}(\om)$.
Let $N$ be the Nemytskii operator  $y\mapsto N(y)$, with $N(y)(x):=\mathcal{N}(x,y(x))$ for $y$ in some $L^{p}$ space. Note that  $N: L^{p}(\om)\to L^{p}(\om)$ is Lipschitz continuous for every fixed $1\le p\le \infty $. We also remark, see \cite[Section 3.2]{DonHinPapVoe22}, that $N:L^{p}(\om) \to L^{p}(\om)$ for $1\le p<\infty$ is Hadamard directionally differentiable with the directional derivative  $N'(y;h)\in L^{p}(\om)$ defined via $N'(y;h)(x)=\mathcal{N}_{x}'(y(x);h(x))$.
In the next theorem we briefly summarize the basic results from \cite{DonHinPapVoe22} concerning the optimal control problem \eqref{P_intro}.

\begin{theorem}[\cite{DonHinPapVoe22}]\label{thm:review}
The following hold for the learning-informed optimal control problem \eqref{P_intro} where we also assume that $p>\frac{d}{2}$ and $p\ge 2$:
\begin{enumerate}[leftmargin=*]
\item For every $u\in L^{p}(\om)$, there exists a unique solution $y\in Y\cap C^{0,a}(\overline{\om})$ for the state equation of   \eqref{P_intro}, where $a>0$ depends only on $p,d$ and $\om$. In particular, for every $M>0$ there exists a constant $c_{a}$ (that depends on $M$) such that
\begin{equation}\label{Holder_estimate}
\|y\|_{C^{0,a}(\overline{\om})} \le c_{a} \|u-\mathcal{N}(\cdot,0)\|_{L^{p}(\om)},\quad \text{for all } \|u\|_{L^{p}(\om)}\le M.
\end{equation}
\item The control-to-state map $S:L^{p}(\om)\to Y$ is Hadamard directionally differentiable, and given $u\in L^{p}(\om)$ and a direction $h\in L^{p}(\om)$, $S'(u;h):=z_{h}\in Y\cap C^{0,a}(\overline{\om})$ is the unique solution of 
	\begin{equation}\label{adjoint}\tag{$K$}
	\left \{
	\begin{aligned}
	-\Delta z_{h} + N'(y;z_{h})&=h, \;\; \text{ in }\Omega,\\
	z_{h}&=0, \;\; \text{ on }\partial \Omega,
	\end{aligned}
	\right. 
	\end{equation}
	where $y=S(u)$.
\item The optimal control problem \eqref{P_intro} has a solution.
\item ($B$-stationarity) If $\overline{u}\in L^{p}(\om)$ is a local minimizer for \eqref{P_intro}, $\overline{y}=S(\overline{u})$ is the associated state, and $\mathcal{J}(\cdot)=J(S(\cdot), \cdot)$ is the reduced objective for \eqref{P_intro}, then the pair $(\overline{u}, \overline{y})$ satisfies the following variational inequality:
	\begin{equation}\label{B_stationarity_VI}
	\mathcal{J}'(\overline{u};h)= \langle \overline{y}-g, S'(\overline{u};h)\rangle + \alpha \langle \overline{u},h\rangle \ge 0, \quad \text{for all } h\in T_{\mathcal{C}_{ad}}(\overline{u}).
	\end{equation}
Here $T_{\mathcal{C}_{ad}}(\overline{u})$ denotes the contingent cone of $\mathcal{C}_{ad}$ at $\overline{u}\in \mathcal{C}_{ad}$.
\item ($C$-stationarity)  If $\overline{u}\in L^{p}(\om)$ is a local minimizer for \eqref{P_intro}, and $\overline{y}=S(\overline{u})$ is the associated state, then the pair  $(\overline{u}, \overline{y})$ satisfies the following optimality system:
	\begin{equation}\label{eq:C_stationary}
	\left\{
	\begin{aligned}
	-  \Delta \bar{p}+   \bar{\zeta} \bar{p} = \bar{y}-g\;  &\;\; \text{ in } \Omega ,\quad
	\bar{p}=0\;   \text{ on } \partial \Omega,\\
	\bar{\zeta}(x)  \in \partial \mathcal{N}(x,\bar{y}(x))&\;\; \text{ for almost every } x\in\Omega ,\\
	(\bar{p}+  \alpha \bar{u}, u-\bar{u})\geq 0\; & \;\;\text{ for all }  u\in \mathcal{C}_{ad} ,
	\end{aligned}  \right.
	\end{equation}
for some nonnegative $\bar{\zeta}\in L^{\infty}(\om)$ and for some adjoint state $\bar{p}\in Y$. Here $\partial \mathcal{N}(x,\bar{y}(x))$ is the Clarke generalized gradient of $\mathcal{N}_{x}:=\mathcal{N}(x,\cdot): \RR\to \RR$ evaluated at $\bar{y}(x)$.
\item (Weak stationarity) We say that $\bar{u}\in L^{p}(\om)$ and $\bar{y}=S(\bar{u})$ satisfy the weak stationarity condition if  the first and the third conditions of \eqref{eq:C_stationary} are satisfied for some nonnegative $\bar{\zeta}\in L^{\infty}(\om)$ and for some adjoint state $\bar{p}\in Y$. Obviously any pair $(\bar{u}, \bar{y})$ of local minimizers for \eqref{P_intro} is weak stationary.
\end{enumerate}
\end{theorem}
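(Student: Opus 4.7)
The six assertions range from standard elliptic theory through to a nonsmooth stationarity system, so I would tackle them in the given order and use each to prepare the next.

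\emph{Statements (i)--(iii).} For (i) I would combine monotone operator theory with elliptic regularity. Since $\mathcal{N}(x,\cdot)$ is Lipschitz and monotonically increasing, the Nemytskii operator $N$ is Lipschitz and monotone on $L^{2}(\om)$, so $y\mapsto -\Delta y + N(y)$ from $H_{0}^{1}(\om)$ into $H^{-1}(\om)$ is coercive, continuous and monotone, and Browder--Minty yields existence and uniqueness of a weak solution; bootstrapping via standard $W^{2,p}$ regularity and the Sobolev embedding $W^{2,p}(\om)\hookrightarrow C^{0,a}(\overline{\om})$ for $p>d/2$ gives the Hölder bound \eqref{Holder_estimate}. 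For (ii) I would apply an implicit-function-theorem argument tailored to Hadamard directionally differentiable operators: the sensitivity equation \eqref{adjoint} is uniquely solvable because $z\mapsto N'(y;z)$ is positively homogeneous and monotone (again Browder--Minty), and the chain rule for Hadamard directional differentiability of Nemytskii operators combined with the Hölder estimate in (i) shows $(S(u+th)-S(u))/t\to z_{h}$ in $Y$ as $t\downarrow 0^{+}$. For (iii) a textbook direct method applies: along a minimizing sequence, $u_{n}\rightharpoonup \bar u$ in $L^{2}(\om)$; the estimates in (i) render the associated states precompact in $L^{2}(\om)$; continuity of $N$ lets one pass to the limit in the state equation; and $J$ is weakly lower semicontinuous.

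\emph{Statements (iv) and (vi).} Part (iv) follows directly from (ii): by the chain rule, $\mathcal{J}'(\bar u;h)=\langle \bar y-g,S'(\bar u;h)\rangle+\alpha\langle \bar u,h\rangle$; for any $h\in T_{\mathcal{C}_{ad}}(\bar u)$ there exist $t_{n}\downarrow 0$ and $h_{n}\to h$ with $\bar u+t_{n}h_{n}\in \mathcal{C}_{ad}$, and local optimality combined with the definition of $\mathcal{J}'(\bar u;h)$ (using Hadamard, not merely Gâteaux, directional differentiability to allow $h_{n}\to h$) yields \eqref{B_stationarity_VI} in the limit. Part (vi) is then immediate from (v).

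\emph{Statement (v), the main obstacle.} Here I would pursue a smoothing-and-limit strategy: replace $\mathcal{N}$ by a family of $C^{1}$ approximations $\mathcal{N}_{\epsilon}$ (e.g.\ the convolution-based smoothing from Proposition~\ref{prop:relu_smoothing}(i), applied layerwise or combined with a small strictly monotone perturbation to ensure unique solvability of the smoothed state equation), solve the smoothed problem, write down its classical KKT system, and pass to the limit. The smoothed optimality system yields adjoint states $\bar p_{\epsilon}\in Y$ and multipliers $\bar\zeta_{\epsilon}:=\partial_{y}\mathcal{N}_{\epsilon}(\cdot,\bar y_{\epsilon})$. Uniform bounds $0\le \bar\zeta_{\epsilon}\le C$ in $L^{\infty}(\om)$ come from propagating $0\le \sigma_{\epsilon}'\le 1$ (Lemma~\ref{lemma_can_smooth}) through the chain rule \eqref{relu_eps_prime}; $\bar y_{\epsilon}\to \bar y$ strongly in $C(\overline{\om})$ follows from \eqref{Holder_estimate} and Proposition~\ref{prop:approx_smoothing}; and $\bar p_{\epsilon}$ is $Y$-bounded via the uniformly coercive adjoint equation. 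Extracting weak$^{*}$-limits then produces nonnegative $\bar\zeta$, adjoint $\bar p\in Y$, and the first and third lines of \eqref{eq:C_stationary}. The hard step, in my view, is identifying $\bar\zeta(x)\in \partial\mathcal{N}(x,\bar y(x))$ almost everywhere; I would do this by combining the pointwise a.e.\ convergence of $\nabla \mathcal{N}_{\epsilon}$ to $\nabla \mathcal{N}$ established inside the proof of Proposition~\ref{prop:approx_smoothing} with Clarke's characterization of $\partial\mathcal{N}(x,\cdot)$ as the closed convex hull of limits of classical gradients at nearby differentiability points, and use Egorov together with the uniform convergence $\bar y_{\epsilon}\to \bar y$ to upgrade a.e.\ convergence into the desired weak$^{*}$-inclusion.
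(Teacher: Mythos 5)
First, a point of order: the paper does not actually prove Theorem \ref{thm:review} --- it is quoted as a summary of results from the companion work \cite{DonHinPapVoe22}, so there is no in-paper argument to compare yours against. Judged on its own terms, your outline for (i)--(iv) and (vi) follows the route the paper alludes to elsewhere (monotone-operator theory for the state equation, Hadamard directional differentiability of the Nemytskii operator for the sensitivity equation, the direct method for existence, and the elementary passage from local optimality to \eqref{B_stationarity_VI}), and your plan for (v) matches the paper's own description of how $C$-stationarity is obtained, namely as a limiting optimality system under vanishing smoothing of $\mathcal{N}$.

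There are, however, two genuine gaps. First, in (i) you derive the H\"older bound from ``$W^{2,p}$ regularity plus Sobolev embedding'', but $\Omega$ is only assumed Lipschitz, so $W^{2,p}$ regularity is not available; this is precisely why the paper works in $Y=\{y\in H_{0}^{1}(\Omega):\Delta y\in L^{2}(\Omega)\}$ rather than $H^{2}(\Omega)$. The estimate \eqref{Holder_estimate} must instead come from Stampacchia/De Giorgi--Nash-type $L^{\infty}$ and H\"older estimates for $-\Delta y=u-N(y)\in L^{p}(\Omega)$ with $p>d/2$, which is also where the dependence of $c_{a}$ on $M$ enters. Second, and more seriously, in (v) you solve the smoothed problems and assert $\bar y_{\epsilon}\to\bar y$; but the minimizers $\bar u_{\epsilon}$ of the smoothed problems have no reason to converge to the \emph{given} local minimizer $\bar u$ --- they may accumulate at an entirely different minimizer. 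The standard repair (Barbu's adapted penalization, cf.\ \cite{barbu1984optimal}) is to add $\tfrac{1}{2}\|u-\bar u\|_{L^{2}(\Omega)}^{2}$ to the smoothed objective and restrict to a closed neighbourhood of $\bar u$ on which $\bar u$ is globally optimal; this forces $\bar u_{\epsilon}\to\bar u$ strongly, and only then do the smoothed adjoint systems pass to the limit at $(\bar u,\bar y)$. Without this step the argument does not produce a stationarity system at the point claimed. Finally, your identification of $\bar\zeta(x)\in\partial\mathcal{N}(x,\bar y(x))$ needs more care: the a.e.\ convergence $\nabla\mathcal{N}_{\epsilon}\to\nabla\mathcal{N}$ from Proposition \ref{prop:approx_smoothing} is at fixed arguments, whereas here the argument $\bar y_{\epsilon}(x)$ moves with $\epsilon$; the usual route is weak$^{*}$ compactness of $\partial_{y}\mathcal{N}_{\epsilon}(\cdot,\bar y_{\epsilon})$ in $L^{\infty}(\Omega)$ combined with the upper semicontinuity and convexity of the Clarke gradient, which is exactly why one only lands at $C$-stationarity rather than strong stationarity.
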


We note that $T_{\mathcal{C}_{ad}}(\overline{u})$ is defined as 
\begin{equation*}\label{TCad}
T_{\mathcal{C}_{ad}}(\overline{u}):=\{h\in L^{p}(\om): \exists\, t_{n}\downarrow 0 \text{ and }h_{n}\to h\in L^{p}(\om), \text{ s.t. for all }n\in\NN, \; \overline{u}+t_{n}h_{n}\in \mathcal{C}_{ad} \}.
\end{equation*}
Note that it can be shown \cite[Lemma 6.34]{bonnans2013perturbation}, that if $\mathcal{C}_{ad}$ is of the form 
\begin{equation}\label{Cad_box}
\mathcal{C}_{ad}=\{u\in L^{p}(\om): u_a(x)\le u(x) \le u_b(x), \text{ for almost every }x\in\Omega \}
\end{equation}
with $u_{a},u_{b}\in L^{\infty}(\om)$, $u_{a}<u_{b}$ almost everywhere, then $T_{\mathcal{C}_{ad}}(\overline{u})$ can be characterized by

\begin{equation}\label{TCad_box}
T_{\mathcal{C}_{ad}}(\overline{u})=
\Bigg \{h\in L^{p}(\om):\;\;
\begin{aligned}
h(x)\ge 0,& \;\; \text{almost everywhere in } \{x\in \om: \overline{u}(x)=u_a(x)\}\\
h(x)\le 0,& \;\; \text{almost everywhere in } \{x\in \om: \overline{u}(x)=u_b(x)\}
\end{aligned}
\Bigg\}.
\end{equation}

Apart from the primal notion of $B$-stationarity, and the primal-dual notions of weak and $C$-stationarity also one more primal-dual stationarity concept was discussed in \cite{DonHinPapVoe22}, namely strong stationarity. There, the relationships between all these concepts were analyzed. 
Here we focus on $B$-stationarity, and in particular our developed algorithm studied in Section \ref{sec:algorithm} builds on that notion. 
We only mention that  the $C$-stationarity system (which is weaker than strong stationarity) is obtained as a limiting optimality system where $\mathcal{N}$ is substituted by some smooth version $\mathcal{N}_{\epsilon}$ and the smoothing parameter $\epsilon$ vanishes. In that case the smoothing of the network $\mathcal{N}$ does not need to be canonical as it is only used as a tool in order to get this stationarity system in the limit. 
 Next, we discuss the limitations that arise when this regularization is used not in order to study the limiting case, but in order to solve the corresponding regularized optimal control problem with a classical numerical solver, via smoothing the problem for a fixed $\epsilon>0$.

\subsection*{Implications of the ReLU smoothing on the uniqueness of the state equation}
There are, in general, two levels of approximation involved in the optimal control of learning-informed PDEs. The first level of approximation arises from the approximation of $f$ by a sequence of ReLU neural networks $\mathcal{N}_{n}$  and can be thought as the capability of the ReLU-informed PDE to approximate some ground truth nonsmooth physical model. This is studied in \cite[Proposition 3.3]{DonHinPapVoe22}. The second level of approximation -- as we mentioned above -- considers the approximating PDEs that arise after smoothing the ReLU network in order to  treat the problem algorithmically with classical solvers. 
As we have mentioned in the introduction, due to the potentially large architecture of a network $\mathcal{N}$ (large number of layers and neurons), a natural and efficient way to smoothen it (after its training has been completed) would via simply smoothing the ReLU function, with the canonical smoothing procedure described in the previous section. This would result in the following smoothed version of the ReLU learning-informed PDE
\begin{equation}\label{state_N_eps}\tag{$E_{\mathcal{N}_{\epsilon}}$}
\left \{
\begin{aligned}
-\Delta y + \mathcal{N}_{\epsilon}(\cdot, y)&=u, \;\; \text{ in }\Omega,\\
y&=0, \;\; \text{ on }\partial \Omega.
\end{aligned}
\right. 
\end{equation}
However, nonuniqueness issues for the solutions of \eqref{state_N_eps} can arise, as demonstrated above in Proposition \ref{prop:relu_smoothing}, since the resulting canonically smoothed network $\mathcal{N}_{\epsilon}$ is not necessarily monotonically increasing. Uniqueness for the solutions of \eqref{state_N_eps} could be derived by showing that the operator $A_{\epsilon}: H_{0}^{1}(\om)\to H^{-1}(\om)$ with 
	\[\langle A_{\epsilon}(y), z \rangle_{H^{-1}(\om), H_{0}^{1}(\om)}:= \int_{\om} \nabla y \nabla z\,dx + \int_{\om} \mathcal{N}_{\epsilon}(x,y) z\, dx,\]
is strongly monotone and then applying the Browder-Minty theorem. This is certainly the case if  $ \mathcal{N}_{\epsilon}$ was monotone in $y$, but it could also follow, at least for small $\epsilon>0$, see \cite[Proposition 3.3]{DonHinPapVoe22}, if  $\nabla \mathcal{N}_{\epsilon}\to\nabla \mathcal{N}$ uniformly. However in the case of a canonical smoothing $\mathcal{N}_{\epsilon}$, the convergence of $\nabla \mathcal{N}_{\epsilon}$ to $\nabla \mathcal{N}$ as $\epsilon\to 0$ can only be guaranteed to hold with respect to the $L^{p}$ norm, for every $1\le p<\infty$, see \eqref{cs_Lp_derivatives}. The potential nonuniform convergence of $\nabla \mathcal{N}_{\epsilon}$ to $\nabla \mathcal{N}$ makes the application of the Browder-Minty theorem problematic. In order to be more precise, it would suffice as in the proof of \cite[Proposition 3.3]{DonHinPapVoe22}, to show that for every $\eta>0$ there exists $\epsilon_{0}>0$ such that for every $0<\epsilon<\epsilon_{0}$
\begin{equation}\label{eps_monotone}
	\int_{\om} (\mathcal{N}_{\epsilon}(x,y_{1})-\mathcal{N}_{\epsilon}(x,y_{2}))(y_{1}-y_{2})\,dx \ge -\eta \|y_{1}-y_{2}\|_{L^{2}(\om)}^{2},
\end{equation}
for all $y_{1}, y_{2}\in H_{0}^{1}(\om)$. Indeed in that case, denoting by $c_{\om}$ the Poincar\'e inequality constant, we would have for every $y_{1}, y_{2}\in H_{0}^{1}(\om)$
\begin{align*}
\langle A_{\epsilon} (y_{1}) - A_{\epsilon}(y_{2}), y_{1}-y_{2} \rangle
&\ge \frac{1}{(c_{\om}+1)^{2}} \|y_{1}-y_{2}\|_{H_{0}^{1}}^{2} + \int_{\om} (\mathcal{N}_{\epsilon}(x,y_{1})-\mathcal{N}_{\epsilon}(x,y_{2}))(y_{1}-y_{2})\,dx\\
&\ge \left ( \frac{1}{(c_{\om}+1)^{2}}  -\eta\right )\|y_{1}-y_{2}\|_{H_{0}^{1}}^{2}, 
\end{align*}
and thus by choosing $0<\eta<1/(c_{\om}+1)^{2}$ we would get strong monotonicity for the operator $A_{\epsilon}$ for small enough $\epsilon>0$.
Consider now the example of Figure \ref{fig:counterexamples_smoothing}, where for $\mathcal{N}_{\epsilon}: \RR \to \RR$ it holds that there exists a $c>0$ such that for every $\epsilon>0$, there exists a $\delta>0$ such that
\begin{equation*}
	\nabla \mathcal{N}_{\epsilon}(t)=\mathcal{N}_{\epsilon}'(t)<-c, \quad \text{ for every }t\in (-\delta, \delta). 
\end{equation*}
This means that for every $y_{1}<y_{2}\in H_{0}^{1}(\om)$ with values in $(-\delta, \delta)$ a pointwise application of the mean value theorem gives for some $\theta$, with $\theta(x)\in (y_{1}(x), y_{2}(x))$
\begin{equation*}
	\int_{\om} (\mathcal{N}_{\epsilon}(y_{1})-\mathcal{N}_{\epsilon}(y_{2}))(y_{1}-y_{2})\,dx=\int_{\om} \mathcal{N}'_{\epsilon}(\theta) (y_{1}-y_{2})^{2}dx < - c \|y_{1}-y_{2}\|_{L^{2}(\om)}^{2}.
\end{equation*}
Hence if  $c>0$ turns out to be large, the absorption of the last term into $\frac{1}{(c_{\om}+1)^{2}}\|y_{1}-y_{2}\|_{H_{0}^{1}(\om)}^{2}$ is not possible. We note however that one can still prove existence of solutions for the PDEs with nonmonotone nonlinearity for instance by showing that the latter is equivalent to the Euler-Lagrange equation of an associated variational problem, see for instance \cite{DonHinPap20} or by using the theory of type $M$ operators as it is done in the next section. Nevertheless uniqueness can no longer be guaranteed.

Having a  (canonical) smoothing $\mathcal{N}_{\epsilon}$ of $\mathcal{N}$, that satisfies the properties  of Proposition \ref{prop:approx_smoothing} with the additional property that $\mathcal{N}_{\epsilon}(x,\cdot)$ is monotonically increasing for every $x\in \om$, could  be theoretically achieved in two ways: The first way would be to take advantage of the fact that any ReLU network $\mathcal{N}$ of arbitrary number of layers can be realized by a ReLU network of one hidden layer. Then one could use the canonical smoothing derived from convolution $\sigma_{\epsilon}:=\rho_{\epsilon}\ast \sigma$ that preserves monotonicity, see $(i)$ of Proposition \ref{prop:relu_smoothing}. Of course such an approach would not necessarily work in practice in the case  one wants to use a classical solver in order to solve a smooth version of \eqref{P}, since the one-hidden layer version of $\mathcal{N}$ cannot be easily derived. The second way, would be to consider  abandoning the canonical smoothing approach and smooth directly the multilayer network as $\mathcal{N}_{\epsilon}:=\rho_{\epsilon}\ast \mathcal{N}$. While such an approach preserves monotonicity, the computation of a convolution of the network could be computationally demanding and the resulting function  cannot necessarily be represented by a neural network. Hence, both approaches appear impractical.

Our discussion here should serve as a warning that using feasible canonical smoothing  approaches of $\mathcal{N}$ with the target of solving a smooth approximating problem to $\eqref{P}$ using standard algorithms could be problematic since  multiple solutions for the smoothed state equation might be introduced by this process. This provides a further motivation for  designing algorithms that directly solve the nonsmooth problem as we do in the following Sections \ref{sec:algorithm} and \ref{sec:numerics}.

\section{A descent algorithm for B-stationarity} \label{sec:algorithm}
In this section we introduce a descent algorithm for the ReLU network learning-informed optimal control problem \eqref{P} and discuss its convergence. We recall that $\mathcal{N}$ is assumed monotone in $y$ which gives rise to a unique solution of the learning-informed state equation.
\subsection{A descent algorithm}
We aim to compute local minimizers for \eqref{P} that satisfy certain stationarity conditions, as outlined in Theorem \ref{thm:review}. Here we are particularly interested in $B$-stationarity, i.e.,\ control-state pairs $(u,y)$ that satisfy the following variational inequality:
\begin{equation}\label{B_stationarity_VI}
\mathcal{J}'(u;h)= \langle y-g, S'(u;h)\rangle + \alpha \langle u,h\rangle \ge 0, \quad \text{for all } h\in T_{\mathcal{C}_{ad}}(u).
\end{equation}
For the ease of exposition, from now on we focus on the case where $\mathcal{C}_{ad}$ is of the form \eqref{Cad_box} and thus $T_{\mathcal{C}_{ad}}(u)$ can be written  as in \eqref{TCad_box}.

We proceed in terms of the reduced version of \eqref{P}, i.e., by considering the state as dependent on $u$, i.e., $y=S(u)$, which allows to eliminate the state as an independent variable. Then, given some $u\in \mathcal{C}_{ad}$, following  \cite{Hint_Suro} we consider the following auxiliary problem:
\begin{equation}\label{eq:aux_pro}
\operatorname{minimize} \quad \langle S(u)-g,S'(u;h)\rangle + \alpha \langle u,h\rangle \quad \text{ over }  h\in T_{\mathcal{C}_{ad}}(u).
\end{equation}
Note that according to the definition of $B$-stationarity \eqref{B_stationarity_VI},  it holds that  $h=0\in T_{\mathcal{C}_{ad}}(u)$ is a solution of \eqref{eq:aux_pro} if and only if $(u,S(u))$ is a $B$-stationary point. We point out that when $(u,S(u))$ is not $B$-stationary, then problem \eqref{eq:aux_pro} is not necessarily well-posed. As a remedy, we introduce the regularized version 
\begin{equation}\label{eq:reg_aux_pro}
\operatorname{minimize} \quad \frac{1}{2}q(h,h)+  \langle S(u)-g,S'(u;h)\rangle + \alpha \langle u,h\rangle \quad \text{ over }  h\in T_{\mathcal{C}_{ad}}(u),
\end{equation}
where $q: L^{2}(\om)\times L^{2}(\om)\to \RR$ is a symmetric  functional with $v\mapsto q(v,v)$ convex, differentiable (typically quadratic, hence the notation) and for every $v,v'\in L^{2}(\om)$ satisfying 
\begin{equation}\label{g_coercive_bounded}
q(v,v)\geq C_1\norm{v}^2_{L^2(\Omega)} \quad \text{ and }\quad q(v,v')\leq C_2\norm{v}_{L^2(\Omega)}\norm{v'}_{L^2(\Omega)}, 
\end{equation}
for some constants $C_{1}, C_{2}>0$.
Note that according to \cite[Lemma 2.1]{Hint_Suro} $h=0$ is  a solution of \eqref{eq:reg_aux_pro} if and only $h=0$ is a solution of \eqref{eq:aux_pro}.
Furthermore, the following proposition holds.
\begin{proposition}
	Let $u\in \mathcal{C}_{ad}$ be a feasible point for the reduced version of \eqref{P}. Then the following properties are satisfied:
	\begin{enumerate}
		\item[(1)] The problem \eqref{eq:reg_aux_pro} admits a solution $\bar{h}\in T_{\mathcal{C}_{ad}}$(u).
		\item[(2)] If $\bar{h}\neq 0$, then $\bar{h}$ is a descent direction for the reduced objective $\mathcal{J}$ associated with \eqref{P}.
		\item[(3)] If the directional derivative $S'(u;\cdot): L^{p}(\om)\to Y$ is bounded and linear, then $\bar{h}$ is unique.
	\end{enumerate}
\end{proposition}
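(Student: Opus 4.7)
The plan is to treat \eqref{eq:reg_aux_pro} as the minimization of
\begin{equation*}
F(h) := \tfrac{1}{2}q(h,h) + \langle S(u)-g,\, S'(u;h)\rangle + \alpha\langle u, h\rangle
\end{equation*}
over the closed convex cone $T_{\mathcal{C}_{ad}}(u) \subset L^2(\om)$, and to prove (1) by the direct method. Since $0\in T_{\mathcal{C}_{ad}}(u)$ with $F(0)=0$, the infimum is non-positive. A minimizing sequence $(h_n)$ is $L^2(\om)$-bounded once the cross term is controlled, and for this I would first establish the a priori bound $\|S'(u;h)\|_{H_0^1(\om)}\le C\|h\|_{L^2(\om)}$ by testing the linearized state equation \eqref{adjoint} with $z_h := S'(u;h)$ and using that $\langle N'(y;z_h), z_h\rangle \ge 0$, a consequence of the monotonicity of $\mathcal{N}$ in $y$ propagated through the recursion \eqref{direct_deriv_N_ell}. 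Combined with the coercivity $q(h,h)\ge C_1\|h\|^2_{L^2(\om)}$ from \eqref{g_coercive_bounded}, this yields $L^2$-boundedness, from which I extract a weakly convergent subsequence $h_n \rightharpoonup \bar h$; the $L^2$-closedness and convexity of the representation \eqref{TCad_box} then give $\bar h \in T_{\mathcal{C}_{ad}}(u)$.

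The decisive step, and the one I expect to be the main obstacle, is establishing the weak lower semicontinuity of the cross term $h\mapsto \langle S(u)-g,\, S'(u;h)\rangle$, since $S'(u;\cdot)$ is merely positively homogeneous and generically nonlinear. My plan is to couple the $L^2$-bound on $(h_n)$ with the above estimate to obtain uniform boundedness of $z_{h_n}= S'(u;h_n)$ in $H_0^1(\om)$, and then to invoke the compact embedding $H_0^1(\om)\hookrightarrow L^2(\om)$ to extract $z_{h_n}\to z^\ast$ strongly in $L^2$. Passing to the limit in the weak formulation of \eqref{adjoint} is facilitated by the fact that in \eqref{direct_deriv_N_ell} the multipliers $\mathbbm{1}_{(0,\infty)}(N^{(\ell-1)}(\cdot,y))$ and $\mathbbm{1}_{\{0\}}(N^{(\ell-1)}(\cdot,y))$ depend only on the fixed state $y = S(u)$ and not on the linearization variable; the limit equation is therefore again of the form \eqref{adjoint} with right-hand side $\bar h$, so uniqueness of its solution forces $z^\ast = S'(u;\bar h)$. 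Strong $L^2$-convergence then transfers the cross term to the limit, and combined with the weak l.s.c.\ of the convex $q(\cdot,\cdot)$ and the weak continuity of the linear term, we conclude $F(\bar h)\le \liminf F(h_n)$, completing (1).

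For (2), feasibility of $0$ gives $F(\bar h)\le F(0)=0$, whence
\begin{equation*}
\mathcal{J}'(u;\bar h) = F(\bar h) - \tfrac{1}{2}q(\bar h,\bar h) \;\le\; -\tfrac{C_1}{2}\|\bar h\|_{L^2(\om)}^{2} < 0
\end{equation*}
whenever $\bar h \neq 0$, which is precisely the descent property. For (3), boundedness and linearity of $S'(u;\cdot)$ render the cross term a bounded linear functional on $L^2(\om)$, so $F$ decomposes as a linear part plus the strictly convex quadratic $\tfrac{1}{2}q(\cdot,\cdot)$, whose strict convexity is inherited from $q(h,h)\ge C_1\|h\|^2_{L^2(\om)}$. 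Strict convexity of $F$ on the convex set $T_{\mathcal{C}_{ad}}(u)$ then yields uniqueness of the minimizer $\bar h$.
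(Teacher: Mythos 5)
Your proposal is correct and follows essentially the same route as the paper, which proves all three assertions by the direct method, the feasibility of $h=0$, and strong convexity of the regularized objective, deferring the details to \cite[Proposition 2.3]{Hint_Suro}. The only difference is that you spell out the ingredients the paper outsources to that reference — the a priori bound $\|S'(u;h)\|_{H_0^1(\om)}\le C\|h\|_{L^2(\om)}$ via monotonicity of $N'(y;\cdot)$, and the compactness/limit-passage argument in \eqref{adjoint} giving weak continuity of the cross term — and these details are sound.
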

\begin{proof}
The proof is essentially the same as the one of \cite[Proposition 2.3]{Hint_Suro}, with the only difference that $h$ is constrained to $T_{\mathcal{C}_{ad}}(u)$ instead of the whole $L^{p}(\om)$.
For the first assertion we only need to notice that  $T_{\mathcal{C}_{ad}}(u)$ is non-empty, convex and closed due to the assumption that $\mathcal{C}_{ad}$ is non-empty, convex and closed. Then existence of solutions follows from  the direct method of the calculus of variations. For the second one, notice that  since $u$ is feasible, it follows that $0\in T_{\mathcal{C}_{ad}}(u)$. Therefore the same argument as \cite[Proposition 2.3]{Hint_Suro} can be applied here. The third assertion follows from the strong convexity of the resulting problem.
\end{proof}
From this discussion it follows that for computing a descent direction for the reduced version of \eqref{P} at a non $B$-stationary point $u$, it suffices to solve \eqref{eq:reg_aux_pro}. 
Notice, however, that solving \eqref{eq:reg_aux_pro} is delicate whenever $S'(u; \cdot)$ is not bounded and linear. The latter is typically connected to active nonsmoothness of $\mathcal{N}$, that is when the set
\[\om_{\mathcal{N}}(u):= \{ x\in \om: \; \mathcal{N}(x,\cdot) \text{ is nondifferentiable at } y(x)=S(u)(x) \},\]
has a strictly positive Lebesgue measure (which we denote by $\mathfrak{m}$). In  such a situation we will consider a specific approximation of \eqref{eq:reg_aux_pro} as detailed below.
 Note that $\om_{\mathcal{N}}(u)$ is Lebesgue measurable since $\mathcal{N}$ is jointly continuous on $\om\times \RR$. We mention also that in the case where $\om_{\mathcal{N}}(u)$ has zero Lebesgue measure then \eqref{eq:reg_aux_pro} is a standard quadratic problem, presuming $q$ quadratic.

The specific approximation of \eqref{eq:reg_aux_pro} which we utilize in the nonsmooth case consists of a substitution of the nonlinear (and nonsmooth) map $S'(u;\cdot)$ by a differentiable approximation $\Pi_{\epsilon}(u;\cdot)$. More precisely, fixing an $\epsilon>0$, we define $d_{\epsilon}\in \Pi_{\epsilon}(u;h)$ where $d_{\epsilon}$ is a solution of the problem 
\begin{equation}\label{eq:smoothed_der}
\left \{
\begin{aligned}
-\Delta d_\epsilon + D_\epsilon(y;d_\epsilon)&=h , \;\; \text{ in }\Omega,\\
d_\epsilon &=0, \;\; \text{ on }\partial \Omega.
\end{aligned}
\right. 
\end{equation}
The crucial point here is that  $ D_\epsilon$ is the Nemytskii operator that corresponds to a function $\mathcal{D}_{\epsilon}$ which is   smooth with respect to the second variable but it does not correspond to the derivative $\mathcal{N}_{\epsilon}'$ of some smoothing $\mathcal{N}_{\epsilon}$ of $\mathcal{N}$.
In order  to define $\mathcal{D}_{\epsilon}$ we fix a canonical smoothing $(\sigma_{\epsilon})_{\epsilon>0}$ of the ReLU function such that also \eqref{sigma_uniform_values_stronger} holds. 
Then $\mathcal{D}_{\epsilon}$ is defined by simply substituting the ReLU (the max function) by $\sigma_{\epsilon}$ whenever this ReLU is applied to the direction $d$, but leaving the derivative of ReLU intact, wherever that appears in the recursive formulas \eqref{direct_deriv_N2}--\eqref{direct_deriv_N_ell}  for the directional derivative $\mathcal{N}_{x}'(y;d)$ of the ReLU network $\mathcal{N}$. Specifically, for $z:=(x,y)$,
\begin{align*}
(\mathcal{D}_{\epsilon}^{(2)})_{x}(y;d)&=W_{2}\cdot \left ( \mathbbm{1}_{(0,\infty)}(W_{1}z+b_{1}) W_{1}(:,n_{0})d 
+  \mathbbm{1}_{\{0\}}(W_{1}z+b_{1})  \sigma_{\epsilon}(W_{1}(:,n_{0})d) \right ),\\ 
(\mathcal{D}_{\epsilon}^{(\ell)})_{x}(y;d)&=W_{\ell} \cdot \left ( \mathbbm{1}_{(0,\infty)}(N^{(\ell-1)}(z)) (\mathcal{D}_{\epsilon}^{\ell-1})_{x}(y;d) 
+ \mathbbm{1}_{\{0\}} (N^{(\ell-1)}(z)) \sigma_{\epsilon}((\mathcal{D}_{\epsilon}^{(\ell-1)})_{x}(y;d))  \right ),\\
(\mathcal{D}_{\epsilon})_{x}(y;d)&=(\mathcal{D}_{\epsilon}^{(L)})_{x}(y;d),
\end{align*}
with $\ell=3, \ldots, L$, where $L$ is the number of layers of $\mathcal{N}$. It is easy to check that the regularity of $\mathcal{D}_{\epsilon}$ with respect to $d$ is dictated by the regularity of $\sigma_{\epsilon}$.  For the sake of clarity, we state the formulas of $\mathcal{D}_{\epsilon}$ for the case of one and two-hidden layer ReLU networks, where also for simplicity, there is no explicit dependence on $x$, i.e.,  $\mathcal{N}: \RR \to \RR$. For the one-hidden layer case we have for $W_{1}=(w_{1}^{1}, \ldots, w_{1}^{n_{1}})^{T}$, $W_{2}=(w_{2}^{1}, \ldots, w_{2}^{n_{1}})$, $b_{1}=(b_{1}, \ldots, b_{n_{1}})^{T}$, $b_{2}\in \RR$,
\begin{align*}
\mathcal{N}(y)&=b_{2}+ \sum_{i=1}^{n_{1}} w_{2}^{i} \max( w_{1}^{i}y+b_{1}^{i},0),\\
\mathcal{N}'(y;d)&=\sum_{i=1}^{n_{1}} w_{2}^{i} \left (  \mathbbm{1}_{(0,\infty)}(w_{1}^{i}y+b_{1}^{i}) w_{1}^{i} d +\mathbbm{1}_{\{0\}}(w_{1}^{i}y+b_{1}^{i}) \max(w_{1}^{i}d,0) \right ),\\
\mathcal{D}_{\epsilon}(y;d)&=\sum_{i=1}^{n_{1}} w_{2}^{i} \left (  \mathbbm{1}_{(0,\infty)}(w_{1}^{i}y+b_{1}^{i}) w_{1}^{i} d +\mathbbm{1}_{\{0\}}(w_{1}^{i}y+b_{1}^{i}) \sigma_{\epsilon}(w_{1}^{i}d) \right ).
\end{align*}
On the other hand for a two-hidden layer case we have, for $W_{1}=(w_{1}^{1}, \ldots, w_{1}^{n_{1}})^{T}$, $W_{2}=(w_{2}^{j,i})_{j,i}$, $i=1, \ldots, n_{1}$, $j=1, \ldots, n_{2}$, $W_{3}=(w_{3}^{1}, \ldots , w_{3}^{n_{2}})$, $b_{1}=(b_{1}, \ldots, b_{n_{1}})^{T}$, $b_{2}=(b_{1}, \ldots, b_{n_{2}})^{T}$, $b_{3}\in \RR$,
\begin{align*}
\mathcal{N}(y)&= b_{3} + \sum_{j=1}^{n_{2}} w_{3}^{j} \max \left (
\sum_{i=1}^{n_{1}} w_{2}^{j,i} \max(w_{1}^{k} y +b_{1}^{k}, 0) +b_{2}^{j}
  ,0\right ),\\
  \mathcal{N}'(y;d)
  &=\sum_{j=1}^{n_{2}} w_{3}^{j} \mathbbm{1}_{(0,\infty)}(v^{j})\left ( \sum_{i=1}^{n_{1}} w_{2}^{j,i} \left (  \mathbbm{1}_{(0,\infty)}(w_{1}^{i}y+b_{1}^{i}) w_{1}^{i} d +\mathbbm{1}_{\{0\}}(w_{1}^{i}y+b_{1}^{i}) \max(w_{1}^{i}d,0) \right ) \right)\\
  &\;\;+ \sum_{j=1}^{n_{2}} w_{3}^{j} \mathbbm{1}_{\{0\}}(v^{j}) \max\left( \sum_{i=1}^{n_{1}} w_{2}^{j,i} \left (  \mathbbm{1}_{(0,\infty)}(w_{1}^{i}y+b_{1}^{i}) w_{1}^{i} d +\mathbbm{1}_{\{0\}}(w_{1}^{i}y+b_{1}^{i}) \max(w_{1}^{i}d,0) \right )  ,0\right),
  \end{align*}
  \begin{align*}
  \mathcal{D}_{\epsilon}(y;d)&=\sum_{j=1}^{n_{2}} w_{3}^{j} \mathbbm{1}_{(0,\infty)}(v^{j})\left ( \sum_{i=1}^{n_{1}} w_{2}^{j,i} \left (  \mathbbm{1}_{(0,\infty)}(w_{1}^{i}y+b_{1}^{i}) w_{1}^{i} d +\mathbbm{1}_{\{0\}}(w_{1}^{i}y+b_{1}^{i}) \sigma_{\epsilon}(w_{1}^{i}d) \right ) \right)\\
  &\;\;+ \sum_{j=1}^{n_{2}} w_{3}^{j} \mathbbm{1}_{\{0\}}(v^{j}) \sigma_{\epsilon}\left( \sum_{i=1}^{n_{1}} w_{2}^{j,i} \left (  \mathbbm{1}_{(0,\infty)}(w_{1}^{i}y+b_{1}^{i}) w_{1}^{i} d +\mathbbm{1}_{\{0\}}(w_{1}^{i}y+b_{1}^{i}) \sigma_{\epsilon}(w_{1}^{i}d) \right ) \right),
\end{align*}
where $v^{j}=\sum_{i=1}^{n_{1}} w_{2}^{j,i} \max(w_{1}^{k} y +b_{1}^{k}, 0) +b_{2}^{j}$.\\

We have the following approximation result.

\begin{lemma}\label{lem:conv_nn}
Let $1\le p< \infty$, $\mathcal{N}:\om\times \RR\to \RR$ be a ReLU neural network,  $N: L^{p}(\om)\to L^{p}(\om)$ its corresponding Nemytskii operator, $u\in L^{p}(\om)$, $y=S(u)$ and $N'(y;\cdot): L^{p}(\om) \to L^{p}(\om)$ be the directional derivative of $N$. Then for the operator $L^{p}(\om)\ni d \mapsto D_\epsilon(y;d)$ it holds that
\begin{equation}\label{N_prime_D_eps}
\norm{N'(y;d)- D_\epsilon(y;d)}_{L^p(\Omega)}\leq C \epsilon \; \text{ for all }\; d\in L^p(\Omega),
\end{equation}
where $C>0$ is some constant independent of $\epsilon$. In particular $D_\epsilon(y;\cdot): L^{p}(\om) \to L^{p}(\om)$.
\end{lemma}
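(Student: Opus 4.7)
The estimate is purely an algebraic/pointwise comparison between the two recursive formulas, so I would prove \eqref{N_prime_D_eps} by induction on the number of hidden layers $L$ of $\mathcal{N}$, in the same spirit as Proposition \ref{prop:approx_smoothing}. The key structural observation is that $\mathcal{D}_\epsilon$ is built from $\mathcal{N}'$ by replacing, in every place where the $\max(\cdot,0) = \sigma(\cdot)$ function is applied to (an affine image of) the direction $d$, the ReLU by its canonical smoothing $\sigma_\epsilon$, while leaving the indicator functions $\mathbbm{1}_{(0,\infty)}(N^{(\ell-1)}(z))$ and $\mathbbm{1}_{\{0\}}(N^{(\ell-1)}(z))$ untouched. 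Since these indicators depend only on $x$ and $y=S(u)$ (not on $d$), they are common to both expressions and act as fixed $\{0,1\}$-valued factors.

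\textbf{Base case.} For a one-hidden layer network, subtracting the explicit formulas for $\mathcal{N}'(y;d)$ and $\mathcal{D}_\epsilon(y;d)$ given in the text yields
\[
\mathcal{N}'(y;d) - \mathcal{D}_\epsilon(y;d) = \sum_{i=1}^{n_1} w_2^i\, \mathbbm{1}_{\{0\}}(w_1^i y + b_1^i)\bigl(\sigma(w_1^i d) - \sigma_\epsilon(w_1^i d)\bigr),
\]
so that $|\mathcal{N}'(y;d) - \mathcal{D}_\epsilon(y;d)| \le \bigl(\sum_i |w_2^i|\bigr)\|\sigma-\sigma_\epsilon\|_\infty \le C\epsilon$ pointwise by \eqref{sigma_uniform_values_stronger}. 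Integrating over the bounded domain $\Omega$ gives the desired $L^p$ bound.

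\textbf{Inductive step.} Assume the pointwise estimate $|(N^{(\ell-1)})_x'(y;d) - (\mathcal{D}_\epsilon^{(\ell-1)})_x(y;d)| \le C_{\ell-1}\epsilon$ holds for all $d\in\mathbb{R}$. Subtracting the recursive definitions at layer $\ell$ and using the triangle inequality, I would split the difference into two groups of summands: those multiplied by $\mathbbm{1}_{(0,\infty)}(N^{(\ell-1)}(z))$ (which directly inherit the inductive bound) and those multiplied by $\mathbbm{1}_{\{0\}}(N^{(\ell-1)}(z))$, for which I add and subtract $\sigma\bigl((\mathcal{D}_\epsilon^{(\ell-1)})_x(y;d)\bigr)$ to obtain
\[
\bigl|\sigma\bigl((N^{(\ell-1)})_x'(y;d)\bigr) - \sigma_\epsilon\bigl((\mathcal{D}_\epsilon^{(\ell-1)})_x(y;d)\bigr)\bigr| \le \bigl|(N^{(\ell-1)})_x'(y;d) - (\mathcal{D}_\epsilon^{(\ell-1)})_x(y;d)\bigr| + \|\sigma - \sigma_\epsilon\|_\infty,
\]
where I have used that $\sigma$ and $\sigma_\epsilon$ are $1$-Lipschitz (the latter by Lemma \ref{lemma_can_smooth}(i)) together with \eqref{sigma_uniform_values_stronger}. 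Multiplying by $\|W_\ell\|$ and summing over neurons yields $|(N^{(\ell)})_x'(y;d) - (\mathcal{D}_\epsilon^{(\ell)})_x(y;d)| \le C_\ell \epsilon$ with $C_\ell$ depending only on the network weights. After $L$ steps I obtain a pointwise bound $C\epsilon$ that is uniform in $d$, and integrating over the bounded $\Omega$ gives \eqref{N_prime_D_eps} for every $1\le p<\infty$. Since $N'(y;\cdot): L^p(\Omega)\to L^p(\Omega)$ by the results recalled from \cite{DonHinPapVoe22}, the same property for $D_\epsilon(y;\cdot)$ follows immediately from \eqref{N_prime_D_eps}.

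\textbf{Expected difficulty.} The only nontrivial point is bookkeeping in the inductive step: one must verify that the error estimate passes through the indicator factors without any $d$-dependent blow-up. This works precisely because the indicators are bounded by $1$ and depend only on $y$ (not on $d$ or $\epsilon$), and because the $1$-Lipschitz property of $\sigma_\epsilon$ from Lemma \ref{lemma_can_smooth}(i) prevents amplification of the inductive error through each subsequent layer. Thus the constant $C$ ends up depending only on the network parameters (and on $|\Omega|$), as required.
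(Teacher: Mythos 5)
Your proposal is correct and follows exactly the route the paper indicates for this lemma, namely induction over the number of layers using the bound \eqref{sigma_uniform_values_stronger}; the paper simply omits the details that you have filled in. Your bookkeeping in the inductive step (the $\{0,1\}$-valued indicators passing through unchanged, the $1$-Lipschitz property of $\sigma$ controlling the propagated error, and integration over the bounded domain at the end) is sound.
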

\begin{proof}
The proof is straightforward via  induction over the number of layers of $\mathcal{N}$, using \eqref{sigma_uniform_values_stronger}, and thus we  omit the details.
\end{proof}

Note that in particular for the functions $\mathcal{N}'(y(\cdot);\cdot), \mathcal{D}_{\epsilon}(y(\cdot);\cdot): \om \times \RR \to \RR$
we also have that there exists a constant $C>0$ such that for every $\epsilon>0$, $d\in \RR$ and for almost every $x\in\om$, 
\begin{equation}\label{N_prime_D_eps_2}
|\mathcal{N}'(y(x);d)-\mathcal{D}_{\epsilon}(y(x);d)|<C \epsilon.
\end{equation}
 Using the fact that $y\in L^{\infty}(\om)$, as well as $|\sigma_{\epsilon}'|\le 1$, it can also be deduced that
$\mathcal{D}_{\epsilon}(y(x);\cdot)$ is uniformly Lipschitz, i.e., there exists a $c>0$ such that for every $d_{1}, d_{2}\in \RR$, for every $\epsilon>0$ and almost every $x\in \om$
\begin{equation}\label{LipschitzD_eps}
|\mathcal{D}_{\epsilon}(y(x);d_{1})-\mathcal{D}_{\epsilon}(y(x);d_{2})|\le c |d_{1}-d_{2}|.
\end{equation}
In particular this also implies that
 there exist $a,b>0$ such that for every $d\in \RR$ and almost every $x\in \om$
\begin{equation}\label{atmostlinearD_eps}
|\mathcal{D}_{\epsilon}(y(x);d)|\le a+ b |d|.
\end{equation}

\begin{remark}\label{C_independent_of_y}
We note that  the constant $C>0$ in \eqref{N_prime_D_eps_2}, and hence also the one in \eqref{N_prime_D_eps}, can be considered to be independent of the state $y$ and as a result also independent of the corresponding control $u$. Indeed, observe that $C$ is dependent on the $L^{\infty}(\om)$-norm of $y$, but given the estimate \eqref{Holder_estimate} and the fact that every $u$ considered here belongs to the box constraint set $\mathcal{C}_{ad}$ of the form \eqref{Cad_box}, we have that the $L^{\infty}(\om)$-norm of $y$ is uniformly bounded.
\end{remark}

We note  that one cannot necessarily expect  the functions $\mathcal{D}_{\epsilon}(y(x);\cdot)$ to be  monotone, see the discussion in Section \ref{sec:smoothingReLU}. Hence the Browder-Minty theorem can no longer be applied, in order to get the existence of a unique solution for the regularized adjoint equation \eqref{eq:smoothed_der}. Nevertheless existence of solutions can be shown via applying the theory of \emph{type $M$} operators, see \cite{showalter2013monotone}. We recall that if $V$ is a reflexive Banach space, and $V^{\ast}$ is its dual, then an operator $\mathcal{A}: V \to V^{\ast}$ is called to be of type $M$ whenever it holds that if $d_{n}\rightharpoonup d$, $\mathcal{A}d_{n} \rightharpoonup h$ and $\limsup_{n} \langle \mathcal{A}d_{n}, d_{n} \rangle \le \langle h, d \rangle$ then it follows that $\mathcal{A}d=h$. The corresponding proposition follows next.

\begin{proposition}\label{prop:existence_Deps}
For every $h\in L^{2}(\om)$, the equation \eqref{eq:smoothed_der} admits a solution $d_{\epsilon}\in H_{0}^{1}(\om)$.
\end{proposition}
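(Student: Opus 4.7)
The plan is to reformulate \eqref{eq:smoothed_der} as an abstract operator equation and to invoke a surjectivity theorem for bounded, coercive, type-$M$ operators on a reflexive Banach space (see e.g.\ Showalter, \cite{showalter2013monotone}). Concretely, define $\mathcal{A}_\epsilon: H_0^1(\Omega) \to H^{-1}(\Omega)$ by
\[
\langle \mathcal{A}_\epsilon d, v\rangle_{H^{-1},H_0^1} := \int_\Omega \nabla d \cdot \nabla v \, dx + \int_\Omega D_\epsilon(y;d)\, v \, dx, \qquad d,v \in H_0^1(\Omega),
\]
so that \eqref{eq:smoothed_der} is equivalent to $\mathcal{A}_\epsilon d_\epsilon = h$. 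Boundedness of $\mathcal{A}_\epsilon$ is immediate from Cauchy--Schwarz together with the at-most-linear growth bound \eqref{atmostlinearD_eps}, which yields $\|D_\epsilon(y;d)\|_{L^2(\Omega)} \le a|\Omega|^{1/2} + b\|d\|_{L^2(\Omega)}$.

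The coercivity of $\mathcal{A}_\epsilon$ will be the most subtle part, because $d \mapsto \mathcal{D}_\epsilon(y(x);d)$ need not be monotone. The key observation is that pointwise monotonicity of $\mathcal{N}(x,\cdot)$ forces $\mathcal{N}'(y(x);d(x))$ and $d(x)$ to have the same sign, so $\mathcal{N}'(y;d)d \ge 0$ pointwise and therefore $\int_\Omega \mathcal{N}'(y;d)d\,dx \ge 0$. Combining this with the uniform pointwise bound \eqref{N_prime_D_eps_2} yields
\[
\int_\Omega D_\epsilon(y;d)\, d\, dx \ge \int_\Omega \mathcal{N}'(y;d)\, d\, dx - C\epsilon\int_\Omega |d|\, dx \ge -C\epsilon |\Omega|^{1/2} \|d\|_{L^2(\Omega)},
\]
which, together with Poincaré's inequality, gives $\langle \mathcal{A}_\epsilon d, d\rangle \ge c_1 \|d\|_{H_0^1(\Omega)}^2 - C\epsilon |\Omega|^{1/2}\|d\|_{H_0^1(\Omega)}$. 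Coercivity in the sense $\langle \mathcal{A}_\epsilon d,d\rangle / \|d\|_{H_0^1} \to \infty$ as $\|d\|_{H_0^1}\to \infty$ follows.

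For the type-$M$ property I decompose $\mathcal{A}_\epsilon = \mathcal{L} + \mathcal{K}_\epsilon$, where $\mathcal{L}$ denotes the Dirichlet Laplacian (linear, bounded, maximal monotone) and $\mathcal{K}_\epsilon$ corresponds to the Nemytskii term $D_\epsilon(y;\cdot)$. The operator $\mathcal{K}_\epsilon$ is \emph{completely continuous} from $H_0^1(\Omega)$ to $H^{-1}(\Omega)$: indeed, if $d_n \rightharpoonup d$ in $H_0^1(\Omega)$ then the Rellich--Kondrachov embedding produces $d_n \to d$ in $L^2(\Omega)$, and the uniform Lipschitz bound \eqref{LipschitzD_eps} yields $D_\epsilon(y;d_n) \to D_\epsilon(y;d)$ in $L^2(\Omega)$, hence in $H^{-1}(\Omega)$. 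Now the standard verification of type $M$ applies: if $d_n \rightharpoonup d$, $\mathcal{A}_\epsilon d_n \rightharpoonup \xi$, and $\limsup_n \langle \mathcal{A}_\epsilon d_n, d_n\rangle \le \langle \xi, d\rangle$, then $\mathcal{K}_\epsilon d_n \to \mathcal{K}_\epsilon d$ strongly, so $\mathcal{L} d_n \rightharpoonup \xi - \mathcal{K}_\epsilon d$ with $\limsup_n \langle \mathcal{L} d_n, d_n\rangle \le \langle \xi - \mathcal{K}_\epsilon d, d\rangle$, and maximal monotonicity of $\mathcal{L}$ forces $\mathcal{L} d = \xi - \mathcal{K}_\epsilon d$, i.e.\ $\mathcal{A}_\epsilon d = \xi$.

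Together, boundedness, coercivity, and the type-$M$ property allow Browder's surjectivity theorem for type-$M$ operators to be applied, yielding a solution $d_\epsilon \in H_0^1(\Omega)$ of $\mathcal{A}_\epsilon d_\epsilon = h$ for any $h\in L^2(\Omega) \hookrightarrow H^{-1}(\Omega)$. I expect the hardest point in writing out the details to be the coercivity step, since it hinges on the correct combined use of monotonicity of the original network $\mathcal{N}$ and the uniform-in-$d$ approximation estimate \eqref{N_prime_D_eps_2}; the latter itself rests on the inductive argument of Lemma \ref{lem:conv_nn} and the choice of a canonical smoothing obeying \eqref{sigma_uniform_values_stronger}.
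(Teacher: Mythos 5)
Your proposal is correct and follows essentially the same route as the paper: the same operator $\mathcal{A}_\epsilon$, boundedness from the linear growth of $D_\epsilon(y;\cdot)$, coercivity from the sign property $\mathcal{N}'(y;d)\,d\ge 0$ (the paper phrases this as $\langle N'(y;d)-N'(y;0),d-0\rangle\ge 0$) combined with the $O(\epsilon)$ perturbation bound \eqref{N_prime_D_eps_2}, the type-$M$ property via complete continuity of the Nemytskii term through Rellich--Kondrachov and \eqref{LipschitzD_eps}, and the surjectivity theorem for bounded coercive type-$M$ operators from \cite{showalter2013monotone}. No gaps.
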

\begin{proof}
According to \cite[Corollary 2.2]{showalter2013monotone} it suffices to show that  $\mathcal{A}: H_{0}^{1}(\om)\to H^{-1}(\om)$ is type $M$, bounded and coercive where for every $d,v\in H_{0}^{1}(\om)$
\begin{equation}\label{mathcal_A}
\mathcal{A}d(v):= \langle \nabla d, \nabla v \rangle +  \langle D_{\epsilon}(y;d), v \rangle. 
\end{equation}
Note that the second term on the right-hand side of \eqref{mathcal_A} is well-defined due to \eqref{atmostlinearD_eps}. The first term of \eqref{mathcal_A} defines a hemicontinuous and monotone operator and hence it is of type $M$, \cite[Lemma 2.1]{showalter2013monotone}. Thus in order to show that $\mathcal{A}$ is of type $M$, according to \cite[Example 2.B]{showalter2013monotone} it suffices to show that the operator $\mathcal{B}:H_{0}^{1}(\om)\to H^{-1}(\om)$
\[\mathcal{B}d(v):=\langle D_{\epsilon}(y;d), v \rangle\]
is completely continuous, i.e., whenever $d_{n}\rightharpoonup d$ in $H_{0}^{1}(\om)$ it holds that $\mathcal{B}d_{n}\to \mathcal{B}d$ strongly in $H^{-1}(\om)$. Indeed from the compact embedding of $H_{0}^{1}(\om)$ into $L^{2}(\om)$ we have that $d_{n} \to d$ in $L^{2}(\om)$. Using \eqref{LipschitzD_eps} we estimate
\begin{equation}
\|D_{\epsilon}(y;d_{n})-D_{\epsilon}(y;d)\|_{L^{2}(\om)}\le c \|d_{n}-d\|_{L^{2}(\om)}
\end{equation}
and thus $D_{\epsilon}(y;d_{n})\to D_{\epsilon}(y;d)$ in $L^{2}(\om)$ which  implies that $\mathcal{B}d_{n}\to \mathcal{B}d$ strongly in $H^{-1}(\om)$. Finally, clearly $\mathcal{A}:H_{0}^{1}(\om)\to H^{-1}(\om)$ is a bounded operator, and also coercive.
Indeed, for the latter property,  we have for $d\in H_{0}^{1}(\om)$ that
\begin{align*}
\frac{\mathcal{A}d(d)}{\|d\|_{H_{0}^{1}(\om)}}
&\ge \frac{1}{(c_{\om}+1)^{2}} \|d\|_{H_{0}^{1}(\om)} + \frac{1}{\|d\|_{H_{0}^{1}(\om)}} \langle N'(y;d),d \rangle+  \frac{1}{\|d\|_{H_{0}^{1}(\om)}} \langle D_{\epsilon}(y;d)- N'(y;d),d \rangle\\
&\ge \frac{1}{(c_{\om}+1)^{2}} \|d\|_{H_{0}^{1}(\om)}+  \frac{1}{\|d\|_{H_{0}^{1}(\om)}} \underbrace{\langle N'(y;d)-N'(y;0),d-0 \rangle}_{\ge 0} - \frac{\tilde{C}\epsilon}{\|d\|_{H_{0}^{1}(\om)}} \|d\|_{L^{2}(\om)},
\end{align*}
for some constant $\tilde{C}>0$. Here, $c_{\om}$ is the Poincar\'e constant and 
we have used the fact that $\mathcal{N}'(y;d)$ is monotonically increasing with respect to $d$ and also \eqref{N_prime_D_eps_2}.

\end{proof}

Upon fixing an $\epsilon>0$, we  use a  solution of \eqref{eq:smoothed_der}, denoted by $d_{\epsilon}=d_{\epsilon}(h)\in \Pi_{\epsilon}(u;h)$,  to replace $S'(u;h)$ when $\om_{\mathcal{N}}(u)$ has positive Lebesgue measure.
In particular,  \eqref{eq:reg_aux_pro} is approximated by the  following problem:
\begin{equation}\label{eq:smo_reg_aux_pro}
\begin{aligned}
&\text{minimize }\quad  \frac{1}{2}q(h,h)+ \langle S(u)-g,d_{\epsilon}\rangle + \alpha \langle u,h \rangle\quad \text{ over } h\in L^{2}(\om),\; d_{\epsilon}\in H_{0}^{1}(\om) \\
&\text{subject to }\left \{
\begin{aligned}
-\Delta d_{\epsilon} + D_{\epsilon}(y;d_{\epsilon})&=h, \;\; \text{ in }\Omega,\\
d_{\epsilon}&=0, \;\; \text{ on }\partial \Omega,
\end{aligned}
\right. \quad \text{ and }\quad  h\in T_{\mathcal{C}_{ad}}(u).
\end{aligned}
\end{equation}

\begin{proposition}\label{prop:smo_reg_aux_pro}
The minimization problem \eqref{eq:smo_reg_aux_pro} has a solution.
\end{proposition}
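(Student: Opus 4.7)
The plan is to apply the direct method of the calculus of variations to the joint minimization in the pair $(h,d_\epsilon)\in L^{2}(\om)\times H_0^{1}(\om)$. I would first check that the feasible set is non-empty and the infimum is finite: since $0\in T_{\mathcal{C}_{ad}}(u)$, Proposition~\ref{prop:existence_Deps} produces an associated $d_\epsilon^{0}\in H_{0}^{1}(\om)$, so $(0,d_\epsilon^{0})$ is feasible and the infimum is bounded above by $\langle S(u)-g,d_\epsilon^{0}\rangle$.

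Given a minimizing sequence $(h_n,d_{\epsilon,n})$, the next step is to derive uniform a priori bounds. The coercivity of $q$ in \eqref{g_coercive_bounded} will drive an $L^{2}$-bound on $h_n$ once the cross-term $\langle S(u)-g,d_{\epsilon,n}\rangle$ is controlled in terms of $\|h_n\|_{L^{2}}$. For this I would test the smoothed state equation with $d_{\epsilon,n}$ to obtain
\[
\|\nabla d_{\epsilon,n}\|_{L^{2}}^{2} + \langle D_\epsilon(y;d_{\epsilon,n}), d_{\epsilon,n}\rangle = \langle h_n, d_{\epsilon,n}\rangle,
\]
and then split $D_\epsilon(y;\cdot) = N'(y;\cdot) + \bigl(D_\epsilon(y;\cdot)-N'(y;\cdot)\bigr)$. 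Using the monotonicity of $\mathcal{N}'(y(x);\cdot)$ together with $\mathcal{N}'(y(x);0)=0$ (apparent from the recursive formulas \eqref{direct_deriv_N2}--\eqref{direct_deriv_N_ell}) makes the first summand non-negative, while \eqref{N_prime_D_eps} bounds the perturbation by $O(\epsilon)\|d_{\epsilon,n}\|_{L^{2}}$; Poincaré's inequality then yields $\|d_{\epsilon,n}\|_{H_{0}^{1}}\leq C(1+\|h_n\|_{L^{2}})$. Inserted back into the objective, this gives coercivity in $\|h_n\|_{L^{2}}$, so both $\{h_n\}\subset L^{2}(\om)$ and $\{d_{\epsilon,n}\}\subset H_{0}^{1}(\om)$ are bounded.

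After extracting subsequences $h_n\rightharpoonup \bar{h}$ in $L^{2}(\om)$ and $d_{\epsilon,n}\rightharpoonup \bar{d}_\epsilon$ in $H_{0}^{1}(\om)$, Rellich--Kondrachov gives strong convergence $d_{\epsilon,n}\to \bar{d}_\epsilon$ in $L^{2}(\om)$. Feasibility $\bar{h}\in T_{\mathcal{C}_{ad}}(u)$ is immediate from the characterization \eqref{TCad_box}, which makes this cone closed and convex in $L^{2}(\om)$. The step I expect to be the main obstacle is passing to the limit in the nonlinear constraint $-\Delta d_{\epsilon,n}+D_\epsilon(y;d_{\epsilon,n})=h_n$, because $D_\epsilon$ is neither monotone nor weakly continuous in general. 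Here the uniform Lipschitz estimate \eqref{LipschitzD_eps} is decisive: it upgrades strong $L^{2}$-convergence of $d_{\epsilon,n}$ to strong $L^{2}$-convergence of $D_\epsilon(y;d_{\epsilon,n})$, and combined with the weak continuity of $-\Delta:H_{0}^{1}(\om)\to H^{-1}(\om)$ this identifies $(\bar{h},\bar{d}_\epsilon)$ as a solution pair of the state constraint.

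It remains to verify weak lower semicontinuity of the objective. The map $h\mapsto q(h,h)$ is convex and continuous on $L^{2}(\om)$ by \eqref{g_coercive_bounded}, hence weakly lsc; the term $\langle S(u)-g,d_{\epsilon,n}\rangle$ converges by the strong $L^{2}$-convergence of $d_{\epsilon,n}$; and $\alpha\langle u,h_n\rangle$ converges by the weak $L^{2}$-convergence of $h_n$. Combining these, the value of the objective at $(\bar{h},\bar{d}_\epsilon)$ is no larger than the infimum, so $(\bar{h},\bar{d}_\epsilon)$ is a minimizer of \eqref{eq:smo_reg_aux_pro}.
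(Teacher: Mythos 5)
Your proposal is correct and follows essentially the same route as the paper: the a priori bound $\|d_{\epsilon}\|_{H_{0}^{1}(\om)}\le c_{1}+c_{2}\|h\|_{L^{2}(\om)}$ obtained by testing with $d_\epsilon$, splitting off $N'(y;\cdot)$ and using its monotonicity together with \eqref{N_prime_D_eps}, then coercivity, weak compactness, closedness of $T_{\mathcal{C}_{ad}}(u)$, passage to the limit in the constraint via the compact embedding and \eqref{LipschitzD_eps}, and weak lower semicontinuity of the objective. Your treatment of the limit in the nonlinear constraint is in fact slightly more explicit than the paper's, which delegates that step to the argument of Proposition \ref{prop:existence_Deps}.
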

\begin{proof}
The first claim is that there exist  constants $c_{1}, c_{2}>0$ independent of $h$ and small   $\epsilon>0$ such that the following estimate holds true
\begin{equation}\label{d_eps_estimate}
\|d_{\epsilon}\|_{H_{0}^{1}(\om)}\le c_{1}+ c_{2}\|h\|_{L^{2}(\om)},
\end{equation}
from which it straightforwardly follows that the objective in \eqref{eq:smo_reg_aux_pro} is bounded from below and coercive in $L^{2}(\om)$. In order to show \eqref{d_eps_estimate} we add and subtract $N'(y;d_{\epsilon})$ in \eqref{eq:smoothed_der} and test with $d_{\epsilon}$ getting
\begin{align*}
&\|\nabla d_{\epsilon}\|_{L^{2}(\om)}^{2} + \langle D_{\epsilon}(y;d_{\epsilon}) -  N'(y;d_{\epsilon}), d_{\epsilon}  \rangle + \underbrace{\langle N'(y;d_{\epsilon}), d_{\epsilon} \rangle}_{\ge 0} =  \langle h, d_{\epsilon} \rangle\\
\Rightarrow 
&\|\nabla d_{\epsilon}\|_{L^{2}(\om)}^{2} - \tilde{C}\epsilon \|d_{\epsilon}\|_{L^{2}(\om)} \le \|h\|_{L^{2}(\om)} \|d_{\epsilon}\|_{L^{2}(\om)}.
\end{align*}
By estimating the $H_{0}^{1}$  norm by the $L^{2}$  norm using the Poincar\'e inequality and by dividing by $\|d_{\epsilon}\|_{H_{0}^{1}(\om)}$ we have the result. Consider now two minimizing sequences $(h_{n})_{n\in\NN}$ and $(d_{\epsilon}^{n})_{n\in\NN}$. From the coercivity of the objective and from the estimate \eqref{d_eps_estimate} it follows that these are bounded in $L^{2}(\om)$ and $H_{0}^{1}(\om)$ respectively and hence there exist $h^{\ast}\in L^{2}(\om)$ and $d_{\epsilon}^{\ast}$ in $H_{0}^{1}(\om)$ such that $h_{n} \rightharpoonup h^{\ast}$ in $L^{2}(\om)$ and $d_{\epsilon}^{n} \rightharpoonup d_{\epsilon}^{\ast}$ in $H_{0}^{1}(\om)$. Since $T_{\mathcal{C}_{ad}}(u)$ is convex and $L^{2}$-strongly closed it follows that $h^{\ast}\in T_{\mathcal{C}_{ad}}(u)$. It remains to show that $(h^{\ast}, d_{\epsilon}^{\ast})$ is a feasible pair, i.e., it satisfies \eqref{eq:smoothed_der}. For this it suffices to show that $D_{\epsilon}(y;d_{\epsilon}^{n})\rightharpoonup D_{\epsilon}(y;d_{\epsilon}^{\ast})$ weakly in $L^{2}(\om)$, which follows similarly as in the proof of Proposition \ref{prop:existence_Deps}.
The proof is complete in view of the lower semicontinuity of the objective in \eqref{eq:smo_reg_aux_pro} with respect to the corresponding weak convergences.

\end{proof}

\vspace{1em}

In the remainder of this section, we show that for sufficiently small $\epsilon>0$, we are still able to find a descent direction by solving \eqref{eq:smo_reg_aux_pro} instead of \eqref{eq:reg_aux_pro}. We start with the following lemma.

\begin{lemma}\label{lem:conv_dd}
	Let $\mathcal{N}$ be a ReLU neural network, $u, h\in L^{p}(\om)$, $y=S(u)$, $\epsilon>0$, and  let $d=S'(u;h)$, $d_{\epsilon}\in \Pi_\epsilon(u;h)$ be defined as before. Then the following estimate holds: 
\begin{equation}\label{P_eps_S_prime_estimate}
\norm{d_{\epsilon}- S'(u;h)}_{H^1(\Omega)}\leq C \norm{N'(y;d_\epsilon)-D_\epsilon(y;d_\epsilon)}_{L^2(\Omega)}.
\end{equation}
 with a constant $C>0$ independent of $h$ and $\epsilon$. In particular in view of \eqref{N_prime_D_eps} the inequality 
 \begin{equation}\label{P_eps_S_prime_Ceps}
 \|d_{\epsilon}- S'(u;h)\|_{H^{1}(\om)}\le C\epsilon
 \end{equation} 
 holds for a generic constant $C>0$ still independent of $h$ and $\epsilon$.
 \end{lemma}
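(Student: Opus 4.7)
The plan is to write down the governing equations for $d = S'(u;h)$ and $d_\epsilon$, subtract them, and exploit the monotonicity of the operator $d\mapsto N'(y;d)$ via an energy estimate, absorbing the discrepancy $N'(y;d_\epsilon)-D_\epsilon(y;d_\epsilon)$ onto the right-hand side.

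Concretely, by Theorem \ref{thm:review}(ii), $d=S'(u;h)$ solves \eqref{adjoint} with right-hand side $h$, while $d_\epsilon$ solves \eqref{eq:smoothed_der}; both lie in $H_0^1(\om)$. Subtracting the two equations and adding and subtracting $N'(y;d_\epsilon)$, we obtain
\begin{equation*}
-\Delta(d_\epsilon-d) + \bigl(N'(y;d_\epsilon)-N'(y;d)\bigr) = N'(y;d_\epsilon)-D_\epsilon(y;d_\epsilon) \quad\text{in }\om,
\end{equation*}
with $d_\epsilon-d=0$ on $\partial\om$. Testing against $d_\epsilon-d\in H_0^1(\om)$ yields
\begin{equation*}
\norm{\nabla(d_\epsilon-d)}_{L^2(\om)}^2 + \langle N'(y;d_\epsilon)-N'(y;d),\, d_\epsilon-d\rangle = \langle N'(y;d_\epsilon)-D_\epsilon(y;d_\epsilon),\, d_\epsilon-d\rangle.
\end{equation*}

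The crucial observation is that the second term on the left is nonnegative. Indeed, since $\mathcal{N}(x,\cdot)$ is monotonically increasing by standing assumption, a direct comparison in the defining limit of the one-sided directional derivative shows that $\mathcal{N}'_x(y(x);\cdot)$ is nondecreasing in its direction argument; hence $(N'(y;d_\epsilon)(x)-N'(y;d)(x))(d_\epsilon(x)-d(x))\ge 0$ pointwise almost everywhere in $\om$, and the integral is nonnegative (this is the same monotonicity invoked in the proof of Proposition \ref{prop:existence_Deps}). Applying Cauchy--Schwarz on the right and the Poincar\'e inequality on the left then produces
\begin{equation*}
\norm{\nabla(d_\epsilon-d)}_{L^2(\om)} \le c_\om \norm{N'(y;d_\epsilon)-D_\epsilon(y;d_\epsilon)}_{L^2(\om)},
\end{equation*}
and the equivalence of $\norm{\cdot}_{H^1(\om)}$ with $\norm{\nabla\cdot}_{L^2(\om)}$ on $H_0^1(\om)$ delivers \eqref{P_eps_S_prime_estimate}. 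The estimate \eqref{P_eps_S_prime_Ceps} is then immediate from Lemma \ref{lem:conv_nn} applied at $d=d_\epsilon$, together with Remark \ref{C_independent_of_y} which guarantees that the $\mathcal{O}(\epsilon)$ constant does not depend on $u$ or $h$.

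The only delicate point is the monotonicity of $d\mapsto N'(y;d)$: it does not come from convexity (ReLU networks are not convex) but from the monotonicity of $\mathcal{N}$ in $y$, transferred through the one-sided definition of $\mathcal{N}'_x(y;\cdot)$; once this is in hand, the rest is a standard Browder--Minty-style test-function argument, and $h$ enters only through $d_\epsilon$ on the right-hand side, so the final constant is indeed independent of $h$ and $\epsilon$.
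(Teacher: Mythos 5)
Your proof is correct and follows essentially the same route as the paper: subtract the two governing equations, add and subtract $N'(y;d_\epsilon)$, and bound the error by an energy estimate, with \eqref{P_eps_S_prime_Ceps} then following from Lemma \ref{lem:conv_nn}. The only minor difference is that the paper writes $N'(y;d_\epsilon)-N'(y;d)=\xi\,(d_\epsilon-d)$ for some $\xi\in L^\infty(\Omega)$ with $\xi\ge 0$ via a mean value representation and then invokes standard elliptic estimates, whereas you discard the same term directly by testing with $d_\epsilon-d$ and using the monotonicity of $d\mapsto N'(y;d)$ (which, as you correctly note, the paper itself already uses in Propositions \ref{prop:existence_Deps} and \ref{prop:smo_reg_aux_pro}); both arguments are valid and give a constant independent of $h$ and $\epsilon$.
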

\begin{proof}
We have that $d_{\epsilon}, d$ satisfy
	\[- \Delta  d_\epsilon + D_\epsilon(y;d_\epsilon)=h \text{,  } - \Delta  d + N'(y;d)=h,\;\text{ in } \Omega, \quad \text{ and }\quad d_{\epsilon}=d =0 \;\text{ on  }\; \partial \Omega.\]
	It follows that  $e_\epsilon := d_\epsilon - d$, satisfies 
	\[- \Delta  e_\epsilon + N'(y;d_\epsilon)-N'(y;d)= N'(y;d_\epsilon)-D_\epsilon(y;d_\epsilon)\;\text{ in } \Omega,\quad \text{ and }\quad e^h_\epsilon =0 \;\text{ on  }\; \partial \Omega.\]
Identifying $N'(y;d_\epsilon)-N'(y;d)=\xi(d_{\epsilon} -d)$ for some $\xi \in L^\infty(\Omega)$, with $\xi\ge 0$ a mean value representation (cf. \cite[Proposition 3.1]{DonHinPapVoe22}), and using standard estimates for elliptic PDEs (e.g., \cite[Chapter 6, Theorem 2]{EvansPDEs}) we have the conclusion.
\end{proof}

\begin{remark}\label{still_C_independent_of_y}
The estimate in Lemma \ref{lem:conv_dd} is  uniform for every element of the set $\Pi_{\epsilon}(u;h)$ which is potentially a non-singleton.
We also note again that the constants $C>0$ in \eqref{P_eps_S_prime_estimate} and \eqref{P_eps_S_prime_Ceps} can also be considered to be independent of $y$ and $u$. This follows from Remark \ref{C_independent_of_y} and the fact  that the $L^{\infty}(\om)$-norm of $\xi$ above can be upper bounded independently of $y$ (and $\epsilon$), making the constant $C>0$ in the first estimate  \eqref{P_eps_S_prime_estimate} independent on $y$ (and $\epsilon$).
\end{remark}

Lemma \ref{lem:conv_dd} indicates that $\Pi_\epsilon(u;h)\to S'(u;h)$ in $H^1(\Omega)$ as $\epsilon \to  0$. We note that in order to rigorously state this convergence we would need to define a selection function that chooses a solution of \eqref{eq:smo_reg_aux_pro} for every $\epsilon>0$. While this can be done using the axiom of choice, or at least the axiom of countable choice, for a sequence $\epsilon_{n}\to 0$, we will refrain from using it whenever possible and constrain ourselves to estimates of the type \eqref{P_eps_S_prime_Ceps}.

The next proposition shows that for sufficiently small $\epsilon>0$, we can indeed compute a descent direction by solving \eqref{eq:smo_reg_aux_pro} instead of  \eqref{eq:reg_aux_pro}. 

\begin{proposition}\label{prop:h_descent}
	Let $u\in \mathcal{C}_{ad}$ be a feasible point for the reduced problem of \eqref{P} which is  not $B$-stationary. Then there exists  $\epsilon^{\ast}>0$, such that for $0<\epsilon<\epsilon^{\ast}$ a solution $h_{\epsilon}$ of problem \eqref{eq:smo_reg_aux_pro} is a descent direction for the reduced objective $\mathcal{J}$ of \eqref{P} at $u$ (in particular $h_{\epsilon}\ne 0$). 
\end{proposition}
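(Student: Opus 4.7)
The plan is to exploit the uniform estimate of Lemma \ref{lem:conv_dd} to transfer the strict-descent property of the unregularized auxiliary problem \eqref{eq:reg_aux_pro} to its smoothed counterpart \eqref{eq:smo_reg_aux_pro}. Denote the reduced objectives
\begin{equation*}
\mathcal{F}(h):= \tfrac{1}{2}q(h,h) + \langle S(u)-g, S'(u;h)\rangle + \alpha\langle u,h\rangle, \quad \mathcal{F}_\epsilon(h,d_\epsilon):=\tfrac{1}{2}q(h,h) + \langle S(u)-g, d_\epsilon\rangle + \alpha\langle u,h\rangle,
\end{equation*}
with $d_\epsilon\in\Pi_\epsilon(u;h)$. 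Cauchy--Schwarz combined with \eqref{P_eps_S_prime_Ceps} (uniform in $h$ and in the selection of $d_\epsilon$ thanks to Remark \ref{still_C_independent_of_y}) yields $|\mathcal{F}(h)-\mathcal{F}_\epsilon(h,d_\epsilon)|\leq C_0\epsilon$ for every $h\in T_{\mathcal{C}_{ad}}(u)$, with $C_0>0$ depending only on $\|S(u)-g\|_{L^2(\Omega)}$.

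Next, non-$B$-stationarity of $u$ together with \cite[Lemma 2.1]{Hint_Suro} (quoted in the discussion preceding \eqref{eq:reg_aux_pro}) implies that $h=0$ is not a solution of \eqref{eq:reg_aux_pro}. Since $0 \in T_{\mathcal{C}_{ad}}(u)$ with $\mathcal{F}(0)=0$ (here $q(0,0)=0$ by bilinearity and $S'(u;0)=0$ by monotonicity of $\mathcal{N}(x,\cdot)$, via \eqref{adjoint} with zero right-hand side), any solution $\bar{h}$ supplied by the earlier existence proposition must satisfy $\mathcal{F}^*:=\mathcal{F}(\bar{h})<0$. Set $\delta:=-\mathcal{F}^*>0$. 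Using $\bar{h}$ as a feasible trial point for \eqref{eq:smo_reg_aux_pro} gives
\begin{equation*}
\mathcal{F}_\epsilon^*\leq \mathcal{F}(\bar{h})+C_0\epsilon = -\delta+C_0\epsilon,
\end{equation*}
so that, choosing $\epsilon^* \in (0,\delta/(2C_0))$, any solution $h_\epsilon$ of \eqref{eq:smo_reg_aux_pro} achieves $\mathcal{F}_\epsilon(h_\epsilon,d_\epsilon)\leq -\delta/2$. In particular $h_\epsilon\neq 0$, since evaluating \eqref{eq:smo_reg_aux_pro} at $h=0$ produces only an $O(\epsilon)$ value (again via Lemma \ref{lem:conv_dd} applied at $h=0$, which gives $\|d_\epsilon\|_{H^1(\Omega)}\leq C\epsilon$).

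Reversing the uniform approximation then yields
\begin{equation*}
\mathcal{F}(h_\epsilon)\leq \mathcal{F}_\epsilon(h_\epsilon,d_\epsilon)+C_0\epsilon \leq -\delta/2+C_0\epsilon < 0
\end{equation*}
for $\epsilon<\epsilon^*$. Decomposing $\mathcal{F}(h_\epsilon)=\tfrac{1}{2}q(h_\epsilon,h_\epsilon)+\mathcal{J}'(u;h_\epsilon)$ and using $q(h_\epsilon,h_\epsilon)\geq 0$ from the coercivity in \eqref{g_coercive_bounded} forces $\mathcal{J}'(u;h_\epsilon)\leq \mathcal{F}(h_\epsilon)<0$, so $h_\epsilon$ is indeed a descent direction for the reduced objective $\mathcal{J}$ at $u$.

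The main obstacle I anticipate is verifying that the constant in Lemma \ref{lem:conv_dd} is truly uniform both in $h\in T_{\mathcal{C}_{ad}}(u)$ and in the possibly multi-valued selection from $\Pi_\epsilon(u;h)$, so that a genuine two-sided control on $|\mathcal{F}-\mathcal{F}_\epsilon|$ is available on the full feasible set. This is precisely what Remark \ref{still_C_independent_of_y} records, together with the $L^\infty$-bound on the state provided by \eqref{Holder_estimate}; once that uniformity is in hand, the remainder is a standard stability-of-optimal-values-under-small-perturbations argument.
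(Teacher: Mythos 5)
Your proof is correct, and it takes a genuinely different route from the paper's. The paper tests the smoothed problem \eqref{eq:smo_reg_aux_pro} only against $h=0$ to get the one-sided bound $\tfrac12 q(h_\epsilon,h_\epsilon)+\langle S(u)-g,d_\epsilon\rangle+\alpha\langle u,h_\epsilon\rangle\le C\epsilon$, deduces $\mathcal{J}'(u;h_\epsilon)\le C\epsilon-\tfrac12 q(h_\epsilon,h_\epsilon)$, and then must establish a uniform lower bound $q(h_\epsilon,h_\epsilon)\ge M>0$; this is done by contradiction and requires the separate compactness result Lemma \ref{lem:h_eps} (weak subsequential convergence of minimizers of \eqref{eq:smo_reg_aux_pro} to a minimizer of \eqref{eq:reg_aux_pro}). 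You instead test against a minimizer $\bar h$ of the unregularized problem \eqref{eq:reg_aux_pro}, whose optimal value $-\delta$ is strictly negative precisely because $u$ is not $B$-stationary, and run a two-sided stability-of-optimal-values argument based on the uniform bound $|\mathcal{F}(h)-\mathcal{F}_\epsilon(h,d_\epsilon)|\le C_0\epsilon$ coming from \eqref{P_eps_S_prime_Ceps} and Remark \ref{still_C_independent_of_y}. This buys you two things: you avoid Lemma \ref{lem:h_eps} entirely (the existence of $\bar h$ and of solutions to \eqref{eq:smoothed_der} suffice), and you obtain an explicit threshold $\epsilon^\ast=\delta/(2C_0)$ in terms of the optimal-value gap, whereas the paper's contradiction argument is non-quantitative. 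The price is that your $\epsilon^\ast$ depends on $\delta$, hence on $u$ — but the paper's does too (cf.\ Remark \ref{rem:M_depend_u}), so nothing is lost. All the ingredients you invoke ($q(0,0)=0$ from \eqref{g_coercive_bounded}, $S'(u;0)=0$, the equivalence of $h=0$ solving \eqref{eq:aux_pro} and \eqref{eq:reg_aux_pro}, feasibility of $\bar h$ for the smoothed problem via Proposition \ref{prop:existence_Deps}, and uniformity of the constant over the possibly multi-valued selection $\Pi_\epsilon(u;h)$) are available in the paper, so the argument is complete.
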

\begin{proof}
	Our goal is to show that 
	 there exist $\epsilon^{\ast}>0$ such that, for all $\epsilon<\epsilon^{\ast}$, if $h_{\epsilon}$ solves \eqref{eq:smo_reg_aux_pro}, then
	\[ \langle S(u)-g,S'(u;h_\epsilon)\rangle + \alpha \langle u,h_\epsilon\rangle < 0. \]
Observe first that from the fact that $S'(u;0)=0$ and 	from \eqref{P_eps_S_prime_Ceps}, we have that there exists a constant $C>0$ independent of $\epsilon$ such that for every $d_{\epsilon}$ solving \eqref{eq:smoothed_der} for $h=0$, we have $\|d_{\epsilon}\|_{H^{1}(\om)}\le C\epsilon$. 
It follows that if
	 $(h_\epsilon, d_{\epsilon})$ is a solution of \eqref{eq:smo_reg_aux_pro}, then we have 
	\begin{equation}\label{eq:aux1}
\frac{1}{2}q(h_\epsilon,h_\epsilon)+ \langle S(u)-g, d_{\epsilon})\rangle + \alpha \langle u,h_\epsilon\rangle \leq C \epsilon .
	\end{equation}
again for a constant independent of 
$\epsilon>0$.
	Based on \eqref{eq:aux1}, we have
	\begin{align}\label{eq:descent}
	\langle S(u)-g,S'(u;h_\epsilon)\rangle + \alpha \langle u,h_\epsilon\rangle &\leq C\epsilon - \frac{1}{2}q(h_\epsilon,h_\epsilon) +  \langle S(u)-g,S'(u;h_\epsilon)-d_{\epsilon}\rangle .
	\end{align}
Now in view of the estimate \eqref{P_eps_S_prime_Ceps}, we have for a generic constant $C>0$ still independent of $\epsilon>0$ and $u$
	\begin{align}\label{eq:descent2}
	\langle S(u)-g,S'(u;h_\epsilon)\rangle + \alpha \langle u,h_\epsilon\rangle &\leq C\epsilon - \frac{1}{2}q(h_\epsilon,h_\epsilon). 
	\end{align}
In order to finish the proof it suffices to show that there exists $\epsilon^{\ast}>0$ and $M>0$ such that for every $\epsilon<\epsilon^{\ast}$	
\begin{equation}\label{q_M}
M\le q(h_{\epsilon}, h_{\epsilon}),
\end{equation}
or in view of the coercivity estimate in \eqref{g_coercive_bounded}, it suffices to show
\begin{equation}\label{q_M_norm}
M\le \|h_{\epsilon}\|_{L^{2}(\om)}^{2}.
\end{equation}
Then by potentially reducing $\epsilon^{\ast}$ further, the results follows.
Suppose towards contradiction that \eqref{q_M_norm} does not hold. Then there exists a sequence $\epsilon_{n}\to 0$ such that $\|h_{\epsilon_{n}}\|_{L^{2}(\om)}\to 0$, which implies that $h_{\epsilon_{n}}\to 0$ in $L^{2}(\om)$. Then from Lemma \ref{lem:h_eps} below we deduce that $\bar{h}=0$ is a minimizer of \eqref{eq:reg_aux_pro} which is a contradiction since we have assumed that $u$ is not $B$-stationary. 
\end{proof}

\begin{lemma}\label{lem:h_eps}
Let $u\in \mathcal{C}_{ad}$, $\epsilon_{n}\to 0$  and let $h_{\epsilon_{n}}^{\ast}$ be a  minimizer for the problem \eqref{eq:smo_reg_aux_pro} for every $n\in\NN$. Then there exists a subsequence $(h_{\epsilon_{n_{k}}}^{\ast})_{k\in\NN}$  and  a minimizer $h^{\ast}$ of \eqref{eq:reg_aux_pro} such that $h_{\epsilon_{n_{k}}}^{\ast}\rightharpoonup h^{\ast}$ in $L^{2}(\om)$ as $k\to\infty$.
\end{lemma}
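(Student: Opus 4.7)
The plan is a standard direct-method argument that combines the a priori estimate \eqref{d_eps_estimate}, the approximation estimate \eqref{P_eps_S_prime_Ceps}, and the compact embedding $H^{1}_{0}(\om)\hookrightarrow L^{2}(\om)$. As a first step I would extract a uniform $L^{2}(\om)$-bound on $h_{\epsilon_{n}}^{\ast}$ from the optimality of $(h_{\epsilon_{n}}^{\ast}, d_{\epsilon_{n}}(h_{\epsilon_{n}}^{\ast}))$. Since $S'(u;0)=0$, Lemma \ref{lem:conv_dd} applied with $h=0$ yields $\|d_{\epsilon_{n}}(0)\|_{H^{1}(\om)}\leq C\epsilon_{n}$, so testing the objective with the feasible pair $(0, d_{\epsilon_{n}}(0))$ gives
\begin{equation*}
\tfrac{1}{2}q(h_{\epsilon_{n}}^{\ast},h_{\epsilon_{n}}^{\ast})+\langle S(u)-g,\, d_{\epsilon_{n}}(h_{\epsilon_{n}}^{\ast})\rangle + \alpha\langle u,h_{\epsilon_{n}}^{\ast}\rangle \leq C\epsilon_{n}.
\end{equation*}
Combining this with \eqref{d_eps_estimate}, the coercivity of $q$, and Young's inequality then delivers a uniform $L^{2}(\om)$-bound on $h_{\epsilon_{n}}^{\ast}$. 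I extract a (non-relabeled) subsequence with $h_{\epsilon_{n_{k}}}^{\ast}\rightharpoonup h^{\ast}$ in $L^{2}(\om)$; since $T_{\mathcal{C}_{ad}}(u)$ is convex and strongly closed, hence weakly closed, we have $h^{\ast}\in T_{\mathcal{C}_{ad}}(u)$.

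The key technical step is identifying the limit of $d_{\epsilon_{n_{k}}}(h_{\epsilon_{n_{k}}}^{\ast})$. By \eqref{P_eps_S_prime_Ceps},
\begin{equation*}
\|d_{\epsilon_{n_{k}}}(h_{\epsilon_{n_{k}}}^{\ast})-S'(u;h_{\epsilon_{n_{k}}}^{\ast})\|_{H^{1}(\om)}\leq C\epsilon_{n_{k}}\to 0,
\end{equation*}
so it suffices to show $S'(u;h_{\epsilon_{n_{k}}}^{\ast})\to S'(u;h^{\ast})$ strongly in $L^{2}(\om)$. Setting $z_{k}:=S'(u;h_{\epsilon_{n_{k}}}^{\ast})$, the $L^{2}$-boundedness of $h_{\epsilon_{n_{k}}}^{\ast}$ together with standard monotone estimates for the adjoint equation \eqref{adjoint} gives $\{z_{k}\}$ bounded in $H_{0}^{1}(\om)$. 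A further subsequence thus satisfies $z_{k}\rightharpoonup z^{\ast}$ in $H_{0}^{1}(\om)$ and, by Rellich compactness, $z_{k}\to z^{\ast}$ in $L^{2}(\om)$. Since $d\mapsto \mathcal{N}'(y(\cdot);d)$ is Lipschitz in $d$ uniformly in $x$ (as is evident from the recursive formulas \eqref{direct_deriv_N2}--\eqref{direct_deriv_N_ell}), the Nemytskii operator $N'(y;\cdot):L^{2}(\om)\to L^{2}(\om)$ is Lipschitz and therefore $N'(y;z_{k})\to N'(y;z^{\ast})$ in $L^{2}(\om)$. Passing to the limit in the PDE $-\Delta z_{k}+N'(y;z_{k})=h_{\epsilon_{n_{k}}}^{\ast}$ (with the right-hand side converging in $H^{-1}(\om)$) identifies $z^{\ast}=S'(u;h^{\ast})$, and consequently $d_{\epsilon_{n_{k}}}(h_{\epsilon_{n_{k}}}^{\ast})\to S'(u;h^{\ast})$ in $L^{2}(\om)$.

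To conclude, fix an arbitrary competitor $\tilde{h}\in T_{\mathcal{C}_{ad}}(u)$ with associated $\tilde{d}_{\epsilon_{n_{k}}}$ solving \eqref{eq:smoothed_der} with right-hand side $\tilde h$. Lemma \ref{lem:conv_dd} yields $\tilde{d}_{\epsilon_{n_{k}}}\to S'(u;\tilde h)$ in $H^{1}(\om)$. Taking $\liminf_{k\to\infty}$ in the optimality inequality
\begin{equation*}
\tfrac{1}{2}q(h_{\epsilon_{n_{k}}}^{\ast},h_{\epsilon_{n_{k}}}^{\ast})+\langle S(u)-g,d_{\epsilon_{n_{k}}}(h_{\epsilon_{n_{k}}}^{\ast})\rangle +\alpha\langle u,h_{\epsilon_{n_{k}}}^{\ast}\rangle \leq \tfrac{1}{2}q(\tilde h,\tilde h)+\langle S(u)-g,\tilde d_{\epsilon_{n_{k}}}\rangle +\alpha\langle u,\tilde h\rangle,
\end{equation*}
and exploiting the weak lower semicontinuity of $q(\cdot,\cdot)$, the strong $L^{2}$-convergence of $d_{\epsilon_{n_{k}}}(h_{\epsilon_{n_{k}}}^{\ast})$, and the strong $L^{2}$-convergence of the right-hand side yields the corresponding inequality with $S'(u;h^{\ast})$ and $S'(u;\tilde h)$ in place of the smoothed quantities. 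Thus $h^{\ast}$ minimizes \eqref{eq:reg_aux_pro}. The main obstacle is the identification of the limit of $S'(u;h_{\epsilon_{n_{k}}}^{\ast})$ as $S'(u;h^{\ast})$, since $S'(u;\cdot)$ is only positively homogeneous rather than linear; this is circumvented by combining Rellich compactness in $L^{2}(\om)$ with the Lipschitz continuity of the Nemytskii operator $N'(y;\cdot)$ in the direction variable.
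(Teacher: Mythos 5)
Your proof is correct and follows essentially the same route as the paper's: a uniform $L^{2}$ bound, weak subsequence extraction, identification of the limiting pair as a solution of \eqref{adjoint}, and a recovery-sequence plus lower-semicontinuity argument against an arbitrary competitor. The only (cosmetic) difference is that you first trade $d_{\epsilon_{n_{k}}}(h^{\ast}_{\epsilon_{n_{k}}})$ for $S'(u;h^{\ast}_{\epsilon_{n_{k}}})$ via \eqref{P_eps_S_prime_Ceps} and pass to the limit in the unregularized equation \eqref{adjoint}, whereas the paper passes to the limit directly in \eqref{eq:smoothed_der} using \eqref{N_prime_D_eps}, compact embedding, and the Lipschitz continuity of $N'(y;\cdot)$ --- the same ingredients either way.
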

\begin{proof}
We first claim that the sequence $(h_{\epsilon_{n}}^{\ast})_{n\in\NN}$ is bounded in $L^{2}(\om)$. This can be seen for instance 
from \eqref{P_eps_S_prime_Ceps} and the fact that $\frac{1}{2}q(\cdot, \cdot)+ \langle S(u)-g,S'(u;\cdot)\rangle$ is coercive.
 It follows that  there exists a subsequence $(h_{\epsilon_{n_{k}}}^{\ast})_{k\in\NN}$  and  $h^{\ast}\in L^{2}(\om)$  such that $h_{\epsilon_{n_{k}}}^{\ast}\rightharpoonup h^{\ast}$ in $L^{2}(\om)$ as $k\to\infty$. From the estimate \eqref{d_eps_estimate} we can assume that $d_{\epsilon_{n_{k}}}^{\ast}\rightharpoonup d^{\ast}$ in $H_{0}^{1}(\om)$ for some $d^{\ast}\in H_{0}^{1}(\om)$ where $d_{\epsilon_{n_{k}}}^{\ast}$ satisfies \eqref{eq:smoothed_der} for $h_{\epsilon_{n_{k}}}^{\ast}$ as right-hand side, also assuming that it has been selected using the axiom of countable choice. Note that we can easily check that $d^{\ast}=S'(u;h^{\ast})$, i.e., the pair $(h^{\ast}, d^{\ast})$ satisfies the unregularized adjoint equation \eqref{adjoint}. Indeed, this follows from the fact that $-\Delta d_{\epsilon_{n_{k}}}^{\ast}\rightharpoonup - \Delta d^{\ast}$ and $h_{\epsilon_{n_{k}}}^{\ast} \rightharpoonup h^{\ast}$ in $H^{-1}(\om)$ and from the fact that $D_{\epsilon_{n_{k}}}(y;d_{\epsilon_{n_{k}}}^{\ast})\to N'(y;d^{\ast})$ in $L^{2}(\om)$. The last convergence can be inferred from the estimate
 \[\|D_{\epsilon_{n_{k}}}(y;d_{\epsilon_{n_{k}}}^{\ast}) -N'(y;d^{\ast}) \|_{L^{2}(\om)}
 \le  \| D_{\epsilon_{n_{k}}}(y;d_{\epsilon_{n_{k}}}^{\ast}) -N'(y;d_{\epsilon_{n_{k}}}^{\ast}) \|_{L^{2}(\om)}
 + \|N'(y;d^{\ast}) - N'(y;d_{\epsilon_{n_{k}}}^{\ast}) \|_{L^{2}(\om)}
 \]
 in combination with \eqref{N_prime_D_eps}, the Lipschitz continuity of $N'(y;\cdot)$ and the fact that $d_{\epsilon_{n_{k}}}^{\ast}\to d^{\ast}$ in $L^{2}(\om)$.
 
 Using the minimizing property of $h_{\epsilon_{n_{k}}}^{\ast}$ and letting $G: L^{2}(\om)\times H_{0}^{1}(\om)\to \RR$ with 
 $G(h,d)= \frac{1}{2}q(h,h)+ \langle S(u)-g,d\rangle + \alpha \langle u,h\rangle + \mathcal{X}_{T_{\mathcal{C}_{ad}(u)}}(h)$ we have that
 \begin{equation}\label{G_mini}
 G(h_{\epsilon_{n_{k}}}^{\ast}, d_{\epsilon_{n_{k}}}^{\ast}) \le G(h_{\epsilon_{n_{k}}}, d_{\epsilon_{n_{k}}}),
 \end{equation}
 for all pairs $(h_{\epsilon_{n_{k}}}, d_{\epsilon_{n_{k}}})$ that satisfy \eqref{eq:smoothed_der} for $\epsilon:=\epsilon_{n_{k}}$. We now claim that for every pair $(h,d)$ satisfying \eqref{adjoint} there exists a pair sequence $(\bar{h}_{\epsilon_{n_{k}}}, \bar{d}_{\epsilon_{n_{k}}})$  that satisfies \eqref{eq:smoothed_der} for $\epsilon:=\epsilon_{n_{k}}$ for each index $k$ such that $\bar{h}_{\epsilon_{n_{k}}}\to h$ in $L^{2}(\om)$ and $\bar{d}_{\epsilon_{n_{k}}}\rightharpoonup d$ in $H_{0}^{1}(\om)$. Indeed we can set $\bar{h}_{\epsilon_{n_{k}}}:=h$ for all $k\in\NN$, and choose  $\bar{d}_{\epsilon_{n_{k}}}\in H_{0}^{1}(\om)$ a solution of
\[-\Delta \bar{d}_{\epsilon_{n_{k}}} + D_{\epsilon_{n_{k}}}(y; \bar{d}_{\epsilon_{n_{k}}})=h.\] 
 Similarly as before we can check that  $\bar{d}_{\epsilon_{n_{k}}}\rightharpoonup d$ in $H_{0}^{1}(\om)$ where $d=S'(u;h)$. By employing the inequality \eqref{G_mini} and taking limits on both sides we have
 \begin{equation*}
 G(h^{\ast},d^{\ast})=\liminf_{k\to\infty}  G(h_{\epsilon_{n_{k}}}^{\ast}, d_{\epsilon_{n_{k}}}^{\ast})\le \lim_{k\to\infty}  G(\bar{h}_{\epsilon_{n_{k}}}, \bar{d}_{\epsilon_{n_{k}}})= G(h,d).
 \end{equation*}
 Since $(h,d)$ was a arbitrary pair  satisfying \eqref{adjoint}, the result follows.
\end{proof}

\begin{remark}\label{rem:M_depend_u}
We note that since the value of the constant $M>0$ in \eqref{q_M_norm} potentially depends on $u\in \mathcal{C}_{ad}$, it cannot be guaranteed that  $\epsilon^{\ast}>0$ can be chosen to have a common fixed value for all $u\in \mathcal{C}_{ad}$.
\end{remark}

Details on how we solve \eqref{eq:smo_reg_aux_pro} in practice are provided below in Section \ref{solving_subproblem}.
Once \eqref{eq:smo_reg_aux_pro} is solved, and a descent direction $h$ is identified, we perform an Armijo line search  in order to compute a step length that sufficiently decreases the reduced objective $\mathcal{J}$. For the sake of completeness we outline this in Algorithm \ref{alg:line_search}, which assumes that we have already computed  $u_{k}, u_{k}+h\in \mathcal{C}_{ad}$ at the $k$-th iteration of the main algorithm.
\begin{algorithm}[t]
\begin{flushleft}
Input: $h\in T_{\mathcal{C}_{ad}}(u_k)$, $\tau_0=\tau >0$, $c\in (0,1)$, $0<\eta\ll 1$, $\nu \in (0,1)$, $i=0$.\\
While
\begin{equation}\label{eq:ls_cost}
\mathcal{J}(u_k+\tau_i h)> \mathcal{J}(u_k)+ \nu \tau_i \mathcal{J}'(u_k; h) \quad \text{ and } \quad  \tau_{i} >\eta
\end{equation}\\
Set: $\tau_{i+1}= c\tau_i$, $i=i+1$\\
 end while
\end{flushleft}\caption{Armijo line search}\label{alg:line_search}
\end{algorithm}
Here $\eta>0$ is some   parameter that prevents the step size from becoming too small.

Note that the directional derivative of the reduced objective $\mathcal{J}'(u_k; h)$ in \eqref{B_stationarity_VI},  can be evaluated using standard adjoint calculus. The corresponding involved PDEs (the state equation in \eqref{P} and the adjoint equation \eqref{adjoint}) are solved numerically via a (semismooth) Newton algorithm. 

In practice,  the decrease of the step length $\tau$  in Algorithm \ref{alg:line_search} may be faster than the decrease of the magnitude of the descent direction $h$. This may result in an insufficient decrease of  the cost functional $\mathcal{J}$, particularly when the iterates approach some nonstationary point where the (reduced) objective is nonsmooth. In such a case we perform a \emph{robustification step} similar to \cite[Algorithm 4]{Hint_Suro}. That is, we resort to a smoothed optimal control problem 
in order  to compute a new control  $u_k$, and then compute a new descent direction based on this $u_{k}$.
 In particular, we solve the following problem
\begin{equation}\label{eq:smooth_cost}
\begin{aligned}
&\text{minimize }\quad \frac{1}{2} \|y-g\|_{L^{2}(\Omega)}^{2} + \frac{\alpha}{2} \|u\|_{L^{2}(\om)}^{2},\quad \text{ over } (y,u)\in H_{0}^{1}(\om) \times L^{2}(\om),\\
&\text{subject to }\left \{
\begin{aligned}
-\Delta y + \mathcal{N}_{\delta}(\cdot, y)&=u, \;\; \text{ in }\Omega\\
y&=0, \;\; \text{ on }\partial \Omega
\end{aligned}
\right., \quad \text{ and }\quad  u\in \mathcal{C}_{ad}.
\end{aligned}
\end{equation}
where $\mathcal{N}_\delta$ is a (canonically) smoothed version of the network $\mathcal{N}$. Note that since problem \eqref{eq:smooth_cost} is merely a helpful tool in the overall algorithm (in practice the robustification step is rarely activated - see next section), and not the final problem to be solved, the potential nonuniqueness of its solutions is not a point of concern. 
The numerical solver for this smooth problem can be found for instance in \cite{DonHinPap20}. After every robustification step, we decrease the parameter $\delta$ by a factor $\tilde{c}\in (0,1)$. 

 We state now in Algorithm \ref{alg:non_smooth_sqp} the overall descent algorithm which is based on the strategy of sequentially minimizing the cost function in \eqref{eq:smo_reg_aux_pro} in order to obtain descent directions.
 A few initial remarks on Algorithm \ref{alg:non_smooth_sqp} are in order. Note that if $u_{k}$ is not a $B$-stationary point, then the internal loop which is triggered in Step 3, in the case where $h_{k}$ is not a descent direction, is finite. This is indeed guaranteed in view of Proposition \ref{prop:h_descent}. The extra update $\epsilon\to c_{1}\epsilon$ in Step 4 after every successful Armijo line search, ensures that the parameter $\epsilon$ goes to zero along the iterations.

\begin{algorithm}[t]
\begin{flushleft}
		Input: $u_0\in \mathcal{C}_{ad}$, $\eta>0$, $\epsilon=\epsilon_0>0$, $\delta=\delta_0>0$, $1\geq \tau >\tau_{\min}>0$,  and  $c,\tilde{c},c_{1},c_{2}, \nu \in (0,1)$. \\[0.5em]
		
		Obtain $y_{0}=S(u_{0})$ by solving the state equation in \eqref{P} using a semismooth Newton method.
		
		Perform the following iteration for $k=0,1,2,\ldots$:\\
		\begin{itemize}
			\item[Step 1:] 
			Solve problem \eqref{eq:lin_reg_aux_pro} at $u=u_k$ in order to get initial values for $h_k$ 			 (details in Section \ref{subsec:algo}). 
			
			If $\mathfrak{m}(\Omega_{\mathcal{N}}(u_k))= 0$, go directly to Step 4.\\
			Else go to Step 2.
			
			\item[Step 2:] Solve the subproblem \eqref{eq:smo_reg_aux_pro}
			 (details  in Sections \ref{solving_subproblem} and \ref{subsec:algo})
			   and update $h_k$ accordingly.

			\item[Step 3:] Check if $h_{k}$ is a descent direction, i.e., whether 
			\[\langle S(u_{k})-g,S'(u_{k};h_k)\rangle + \alpha \langle u,h_k\rangle < 0.\]
			If this is not satisfied, update  $\epsilon\rightarrow c_{1}\epsilon$, and return to Step 2.
			\item[Step 4:] Perform the Armijo line search in Algorithm \ref{alg:line_search} with parameters $c$, $\nu$ and $\eta:=\min\set{\tau_{\min}, \tilde{c} \norm{h_k}_{L^{2}(\om)}}$ to obtain a step length $\tau\in (0,1]$, and then update $\epsilon\to c_{1}\epsilon$.	
			If $\tau<\eta$,  stop the line search, perform the robustification step by solving \eqref{eq:smooth_cost} to obtain a new $u_k$,  update $\delta\rightarrow c_{2} \delta$, let $k=k+1$ and return to Step 1.
			
			\item[Step 5:] Set $u_{k+1}=u_k+\tau h_k$, and compute $S(u_{k+1})$ using again a semismooth Newton method by solving the state equation in \eqref{P}. Let $k=k+1$. 
		\end{itemize}
		
		\caption{Proposed algorithm for optimal control of ReLU-network-informed PDEs}
		\label{alg:non_smooth_sqp}\end{flushleft}
\end{algorithm}

 We mention already here that in order to get an initial value for $h_{k}$ in Step 1, which is used as initialization for Step 2, we solve the following problem  
 \begin{equation}\label{eq:lin_reg_aux_pro}
 \operatorname{minimize} \quad \frac{1}{2}q(h,h)+ \langle S(u)-g,\Pi_{0}(u;h) \rangle + \alpha \langle u,h \rangle \quad \text{ over }  h\in T_{\mathcal{C}_{ad}}.
 \end{equation}
Here, for $h\in L^2(\Omega)$, $\Pi_{0}(u;h)\in H_0^1(\Omega)$ denotes a solution of the following linear equation:
 \begin{equation}\label{eq:app_pde}
\left \{
\begin{aligned}
-\Delta d + D_0(y)d&=h , \;\; \text{ in }\Omega,\\
d &=0, \;\; \text{ on }\partial \Omega,
\end{aligned}
\right. 
\end{equation}
 where $D_0$ is the function that results  by formally setting the derivatives of the ReLU functions at zero to be zero, recall formula \eqref{relu_prime}.
Note that in the case $\mathfrak{m}(\Omega_{\mathcal{N}}(u))= 0$,  \eqref{eq:lin_reg_aux_pro} is equivalent to  \eqref{eq:reg_aux_pro}. We point the reader to Remark \ref{remark_D0} below regarding potential (but rare) complications which might be caused by $D_0$ in \eqref{eq:app_pde}.
We also note that if $q(h,h):=\norm{h}^2_{L^2(\Omega)}$, then Algorithm \ref{alg:non_smooth_sqp} will perform exactly like a (sub-) gradient descent method, which can be slow in terms of convergence rates.
In order to accelerate the algorithm, in the numerical examples
  we use the quadratic functional
  \begin{equation}\label{quadratic_functional}
  q(h,h)= \langle \Pi_{0}(u;h) ,\Pi_{0}(u; h)\rangle +\alpha \langle h,h \rangle,
  \end{equation}
  and we denote its derivative at $h$ by $Qh$. 
We also note  that if the network function $\mathcal{N}(x,y)$ is smooth with respect to $y$, then the proposed algorithm with the above quadratic functional  is an SQP (Sequential Quadratic Programming) type method. 
\begin{remark}\label{remark_D0}
As it was pointed out in \cite{relu_chainrule}, even though $D_{0}$ is almost equal to the gradient of $\mathcal{N}$ and in particular it is an almost everywhere positive function, its values at the nondifferentiability points of $\mathcal{N}$ could lie strictly below the Clark subdifferential of $\mathcal{N}$  at these points. For example, if $\mathcal{N}: \RR \to \RR$, then for every $y\in\RR$, it holds that $\partial \mathcal{N}(y)=[\underline{\partial}\mathcal{N}(y),  \overline{\partial}\mathcal{N}(y)]$, where $\underline{\partial}\mathcal{N}:=\min\{\mathcal{N}'_{-}, \mathcal{N}'_{+}\}$, $\overline{\partial}\mathcal{N}:=\max\{\mathcal{N}'_{-}, \mathcal{N}'_{+}\}$ with $\mathcal{N}'_{\pm}$ denoting the left- and right-sided derivatives. While due to $\mathcal{N}$ being increasing we have $\underline{\partial}\mathcal{N}(y)>0$ for every $y\in\RR$, it could be the case that for some $y_{0}\in\RR$ it holds $D_{0}(y_{0})<0<\underline{\partial}\mathcal{N}(y_0)$ and as a result if the function $y$ in \eqref{eq:app_pde} attains the value $y_{0}$ at a set of positive measure the existence  of that equation could be at stake. Since however \eqref{eq:app_pde} is only used to get some initial values for $h_{k}$, in practice, we can  restrict ourselves to a nonnegative approximation by setting the negative values of $D_{0}$ to zero.
\end{remark}

\subsection{Solving problem \eqref{eq:smo_reg_aux_pro}}\label{solving_subproblem}
We continue by providing some details on solving \eqref{eq:smo_reg_aux_pro} in Step 2 of Algorithm \ref{alg:non_smooth_sqp}. From now on we assume that the state equation in \eqref{eq:smo_reg_aux_pro} admits a unique solution $d_\epsilon=d_\epsilon(h_\epsilon)$. This is indeed the case when, e.g., for sufficiently small $\rho>0$, $\langle D_\epsilon(y;d),d\rangle\geq -\rho\|d\|_{L^2(\Omega)}^2$ for all $d\in H_0^1(\Omega)$. Then the first-order optimality condition for an optimal $h_\epsilon$ reads
\begin{equation}\label{eq:var_ineq}
\langle Q h_\epsilon + p_\epsilon(u;h_\epsilon)+\alpha u, h\rangle \geq 0 \quad \text{ for all }  h\in T_{\mathcal{C}_{ad}}(u).
\end{equation}
Note that in \eqref{eq:var_ineq}, $p_\epsilon(u;h_\epsilon)$ represents the directional derivative of the second term in the objective of \eqref{eq:smo_reg_aux_pro}.  

Since we have assumed box constraints on the control variable (compare \eqref{Cad_box}), and in view of \cite[Theorem 2.29]{Tro10}, formally  the above variational inequality can be equivalently characterized by a system of equations as follows:
\begin{equation}\label{eq:KKT_sub}
\left\{
\begin{aligned}
&-\Delta d_\epsilon + D_\epsilon(y;d_\epsilon)-h_\epsilon=0\; \text{ in } \Omega,\;\text{ and}\; d_\epsilon =0 \text{ on } \partial \Omega,\\
&-\Delta p_\epsilon + \partial_{d}(D_\epsilon(y;d_\epsilon)) p_\epsilon=y-g\; \text{ in } \Omega,\;\text{ and}\; p_\epsilon =0 \text{ on } \partial \Omega,\\
&Q h_\epsilon+ p_\epsilon  +\mu=-\alpha u,\\
&\mu -\max(0,\mu+\lambda(u+h_\epsilon-u_b))-\min(0,\mu+\lambda(u+h_\epsilon-u_a))=0,
\end{aligned}\right.
\end{equation}
where $\lambda>0$ is a constant which is typically set equal to the cost of the control, i.e., $\lambda=\alpha$.
 The first equation in  \eqref{eq:KKT_sub} is simply \eqref{eq:smoothed_der}, 
  while the second one is the adjoint equation that provides a way to calculate the directional derivative in \eqref{eq:var_ineq}.   The third equation represents the first-order stationarity condition of \eqref{eq:smo_reg_aux_pro} with $\mu$ being a slack variable, while the fourth one is used to enforce the box constraint $u+h_\epsilon\in \mathcal{C}_{ad}$, complementarity $\mu(u+h_\epsilon-u_a)(u+h_\epsilon-u_b)=0$ a.e. in $\Omega$, as well as $\mu\geq 0$ a.e. on $\{u+h_\epsilon=u_b\}$ and $\mu\leq 0$ a.e on $\{u+h_\epsilon=u_a\}$. Under suitable assumptions, the nonlinear and nonsmooth system \eqref{eq:KKT_sub} can be solved efficiently via  a primal-dual active-set algorithm (PDAS) for which we provide the details in Section \ref{subsec:algo} below.
  
The subtle point of the system \eqref{eq:KKT_sub} is that additional conditions are required for the existence of solutions for the second equation since the $L^{\infty}$-function $\partial_{d}(D_\epsilon(y;d_\epsilon))$ might be negative on a large set and hence the corresponding PDE operator would not be coercive. This is due to the potential nonmonotonicity of $D_{\epsilon}(y;\cdot)$. Below we provide a sufficient condition which guarantees existence of solutions and, as a consequence the constraint qualification of \cite{Zowe1979} is satisfied. Thus, \eqref{eq:KKT_sub} indeed represents the Karush-Kuhn-Tucker (KKT) system for \eqref{eq:smo_reg_aux_pro}. Note that as we show in Proposition \ref{pro:algo_convergence}, essentially unless an early stopping occurs, it holds that $\|h_{\epsilon_{k}}\|_{L^{2}(\om)}\to 0$ along the iterations $k$ of Algorithm \ref{alg:non_smooth_sqp}. In view of \eqref{eq:smoothed_der}, this implies that $\|d_{\epsilon_{k}}\|_{H_{0}^{1}(\om)}\to 0$ as well. The condition below leverages  this fact.
  
  \begin{lemma}\label{lem:existence_second}
Let $y\in Y\cap C^{0,a}(\overline{\om})$ be a solution of  the state equation in \eqref{P} and let $d_{\epsilon}\in H_{0}^{1}(\om)$ be a solution of the first equation in \eqref{eq:KKT_sub} such that the estimate $\|d_{\epsilon}\|_{H_{0}^{1}(\om)}\to 0$ as $\epsilon\to 0$. 
Suppose that there exists $\delta>0$, possibly dependent on $y$, such that for small enough $\epsilon>0$, the set
\begin{equation}\label{badset}
U=\{x\in\om: \mathcal{D}_{\epsilon}(y(x); \cdot) \text{ is monotone increasing in }(-\delta, \delta)\}
\end{equation}
has a full Lebesgue measure. Then for small enough $\epsilon>0$ the second equation in \eqref{eq:KKT_sub} has a solution $p_{\epsilon}\in H_{0}^{1}(\om)$.
\end{lemma}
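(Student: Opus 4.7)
The plan is to treat the second equation in \eqref{eq:KKT_sub} as a linear elliptic boundary value problem on $H_0^1(\Omega)$ and to apply the Lax--Milgram lemma to the symmetric bilinear form
\[
a(p,v) := \int_\Omega \nabla p\cdot\nabla v\,dx + \int_\Omega \partial_d D_\epsilon(y;d_\epsilon)\, p\, v\,dx,
\]
with the right-hand side $y-g\in L^2(\Omega)\subset H^{-1}(\Omega)$ inducing a bounded linear functional. Continuity of $a$ is straightforward: the coefficient $\partial_d D_\epsilon(y;d_\epsilon)$ belongs to $L^\infty(\Omega)$ with norm bounded by the Lipschitz constant $c$ from \eqref{LipschitzD_eps} (note that $\mathcal{D}_\epsilon(y(x);\cdot)$ is $C^1$ in $d$ by construction, since the indicator factors do not depend on $d$ and $\sigma_\epsilon$ is $C^1$). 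The essential difficulty is \emph{coercivity}, which is nontrivial because, in the absence of monotonicity of $\mathcal{D}_\epsilon(y(x);\cdot)$, the coefficient $\partial_d D_\epsilon(y;d_\epsilon)$ can take negative values.

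The idea is to introduce the ``safe'' set
\[
V_\epsilon := U\cap\{x\in\Omega : |d_\epsilon(x)|<\delta\},
\]
and to exploit that on $V_\epsilon$ the hypothesis that $\mathcal{D}_\epsilon(y(x);\cdot)$ is monotone increasing on $(-\delta,\delta)$ forces $\partial_d \mathcal{D}_\epsilon(y(x);d_\epsilon(x))\geq 0$. Consequently, the negative part $(\partial_d D_\epsilon(y;d_\epsilon))^{-}$ is supported in $\Omega\setminus V_\epsilon$ and pointwise bounded by $c$. The crucial observation is that $\mathfrak{m}(\Omega\setminus V_\epsilon)\to 0$ as $\epsilon\to 0$: by assumption $U$ has full measure, so only $\{|d_\epsilon|\geq\delta\}$ needs to be controlled, and Sobolev embedding $H_0^1(\Omega)\hookrightarrow L^q(\Omega)$ for some $q>2$ (namely $q=2^*$ when $d\geq 3$ and any $q<\infty$ when $d=2$) combined with Chebyshev's inequality yields
\[
\mathfrak{m}\bigl(\{|d_\epsilon|\geq\delta\}\bigr)\leq \delta^{-q}\|d_\epsilon\|_{L^q(\Omega)}^q \leq C\,\delta^{-q}\|d_\epsilon\|_{H_0^1(\Omega)}^q\xrightarrow{\epsilon\to 0} 0,
\]
thanks to the hypothesis $\|d_\epsilon\|_{H_0^1(\Omega)}\to 0$.

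To convert this measure shrinkage into coercivity of $a$, I would use Hölder together with the same Sobolev embedding: for any $p\in H_0^1(\Omega)$,
\[
\int_{\Omega\setminus V_\epsilon} p^2\,dx \;\leq\; \mathfrak{m}(\Omega\setminus V_\epsilon)^{1-2/q}\|p\|_{L^q(\Omega)}^2 \;\leq\; C_S\,\mathfrak{m}(\Omega\setminus V_\epsilon)^{1-2/q}\|\nabla p\|_{L^2(\Omega)}^2,
\]
from which
\[
a(p,p)\;\geq\; \|\nabla p\|_{L^2(\Omega)}^2 \,-\, c\,C_S\,\mathfrak{m}(\Omega\setminus V_\epsilon)^{1-2/q}\|\nabla p\|_{L^2(\Omega)}^2.
\]
For all sufficiently small $\epsilon>0$ the prefactor is strictly less than one, and together with the Poincaré inequality this provides coercivity of $a$ on $H_0^1(\Omega)$. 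Lax--Milgram then delivers the desired $p_\epsilon\in H_0^1(\Omega)$ (in fact uniquely).

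The main obstacle is precisely the potential negativity of $\partial_d D_\epsilon(y;d_\epsilon)$ caused by nonmonotonicity of $\mathcal{D}_\epsilon$; the argument trades this negativity against the smallness of the Lebesgue measure of its support, relying on (i) the local monotonicity hypothesis in a neighborhood of zero, (ii) the $H_0^1$-vanishing of $d_\epsilon$ to push $\{|d_\epsilon|\geq\delta\}$ to a set of negligible measure, and (iii) the strictly subcritical integrability gap coming from the Sobolev embedding which converts ``small support'' into ``small operator norm''. The remaining verifications are routine.
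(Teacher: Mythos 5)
Your proof is correct and follows essentially the same route as the paper's: both reduce the problem to showing that the set where $\partial_d D_\epsilon(y;d_\epsilon)$ is negative has vanishing measure, by splitting it against the level set $\{|d_\epsilon|\ge\delta\}$ (controlled by Chebyshev and $\|d_\epsilon\|_{H_0^1(\om)}\to 0$) and using the full-measure hypothesis on $U$ for the complementary piece. Your version is in fact slightly more complete, since you make explicit — via the H\"older/Sobolev absorption with exponent $q>2$ — how the smallness of that set yields coercivity of the bilinear form, a step the paper leaves implicit before invoking the argument of Proposition \ref{prop:existence_Deps}; using Lax--Milgram rather than the type-$M$ machinery is also the natural choice here, as the equation is linear in $p_\epsilon$.
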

\begin{proof}
It suffices to show that   
\begin{equation}\label{Linfty_eps}
 \mathfrak{m} (\{x\in\om: \partial_{d} D_{\epsilon}(y(x);d_{\epsilon}(x))<0\} )\to 0 \quad \text{as }\epsilon \to 0.
\end{equation}
Indeed, if this holds then the operator $\mathcal{A}_{\epsilon}:H_{0}^{1}(\om)\to H^{-1}(\om)$, where for $d,v\in H_{0}^{1}(\om)$, $\mathcal{A}_{\epsilon}d(v):= \langle \nabla d, \nabla v \rangle +  \langle \partial_{d} D_{\epsilon}(y;d_{\epsilon})d, v \rangle$ is coercive for small enough $\epsilon>0$ and we can proceed as in Proposition \ref{prop:existence_Deps}.

Since we have $\|d_{\epsilon}\|_{H_{0}^{1}(\om)}\to 0$ as $\epsilon\to 0$, using the Chebyshev inequality, it follows that 
\begin{equation}\label{leb_to_zero}
\mathfrak{m}\left (\{x\in\om:\,|d_{\epsilon}(x)|\ge \delta\} \right )\to 0 \quad \text{as }\epsilon \to 0.
\end{equation}
We then have the partition
\begin{align*}
(\{x\in\om: \partial_{d} D_{\epsilon}(y(x);d_{\epsilon}(x))<0\}
&= 
(\{x\in\om: \partial_{d} D_{\epsilon}(y(x);d_{\epsilon}(x))<0\}\cap\{x\in\om:\,|d_{\epsilon}(x)|< \delta\}\\ 
&\quad\cup(\{x\in\om: \partial_{d} D_{\epsilon}(y(x);d_{\epsilon}(x))<0\}\cap\{x\in\om:\,|d_{\epsilon}(x)|\ge \delta\}. 
\end{align*}
The first set in the partition above is a subset of $\om\setminus U$ so it has zero Lebesgue measure, while the measure of the second set goes to zero as $\epsilon\to 0$ in view of \eqref{leb_to_zero}. That shows \eqref{Linfty_eps}.
\end{proof}  
  
Lemma  \ref{lem:existence_second} indicates that, in order to have existence of solutions for the second equation in \eqref{eq:KKT_sub},  it suffices to impose some condition that guarantees that the smoothed function $\mathcal{D}_{\epsilon}(y(x);\cdot)$ will not be decreasing in an area around zero; in a large set or in set of full measure as it is done here. This is the main region of interest since $\|d_{\epsilon_{k}}\|_{L^{2}(\om)}\to 0$ and thus all its values will be essentially concentrated around that area.

\subsection{Convergence analysis}

In this section, we provide information about  the quality of the limits of the sequence of controls $(u_{k})_{k\in \NN}$ and pertinent states $(y_{k})_{k\in\NN}$ generated by Algorithm \ref{alg:non_smooth_sqp}. We start with a result regarding the convergence of the sequence of descent directions $(h_{k})_{k\in\NN}$.

\begin{proposition}\label{pro:algo_convergence}
Let $(u_k)_{k\in\NN}$ be a sequence of controls generated by Algorithm \ref{alg:non_smooth_sqp}. 
 If for every $k\in\NN$, $u_{k}$ is not a $B$-stationary point and the robustification step is activated only finitely many times,  then $\|h_k\|_{L^{2}(\om)}\to 0$ as $k\to \infty$.

\end{proposition}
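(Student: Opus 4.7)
The plan is to combine the monotone decrease of $\mathcal{J}(u_k)$ under the Armijo step with the quantitative descent estimate behind Proposition \ref{prop:h_descent}, and then argue by contradiction against the existence of a subsequence of $\|h_k\|_{L^{2}(\om)}$ bounded away from zero. To begin with, since $u_k\in\mathcal{C}_{ad}$ for every $k$, one has $\mathcal{J}(u_k)\ge 0$, and \eqref{eq:ls_cost} together with $\mathcal{J}'(u_k;h_k)<0$ (guaranteed by Proposition \ref{prop:h_descent} because $u_k$ is not $B$-stationary) forces $(\mathcal{J}(u_k))$ to be strictly decreasing. Hence $\mathcal{J}(u_k)-\mathcal{J}(u_{k+1})\to 0$.

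The next step is to reuse from the proof of Proposition \ref{prop:h_descent} the inequality
\[
\mathcal{J}'(u_k;h_k)\;\le\;C\epsilon_k-\tfrac{1}{2}q(h_k,h_k)\;\le\;C\epsilon_k-\tfrac{C_1}{2}\|h_k\|_{L^{2}(\om)}^{2},
\]
where $C_1$ is the coercivity constant from \eqref{g_coercive_bounded} and, thanks to Remark \ref{still_C_independent_of_y} and the uniform $L^{\infty}$-bound on the iterates forced by the box structure of $\mathcal{C}_{ad}$, the constant $C$ can be chosen independently of $k$. Combined with \eqref{eq:ls_cost} this yields
\[
\mathcal{J}(u_k)-\mathcal{J}(u_{k+1})\;\ge\;\nu\,\tau_k\!\left(\tfrac{C_1}{2}\|h_k\|_{L^{2}(\om)}^{2}-C\epsilon_k\right).
\]

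To extract a lower bound on $\tau_k$, I would exploit the assumption that the robustification step is triggered only finitely many times: from some iteration $k_0$ onwards Algorithm \ref{alg:line_search} terminates successfully, so $\tau_k\ge c\,\eta_k$ with $\eta_k=\min\{\tau_{\min},\tilde c\,\|h_k\|_{L^{2}(\om)}\}$ (either the initial trial $\tau>\tau_{\min}$ is accepted outright, or the last rejected trial was still larger than $\eta_k$ and therefore $\tau_k=c\,\tau_{i-1}>c\,\eta_k$). Simultaneously, Step~4 of the main algorithm multiplies $\epsilon$ by $c_1\in(0,1)$ after every accepted Armijo step, so $\epsilon_k\to 0$ geometrically.

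Finally, I would conclude by contradiction. Suppose $\|h_k\|_{L^{2}(\om)}\not\to 0$ and pick a subsequence $(k_j)$ with $\|h_{k_j}\|_{L^{2}(\om)}\ge c_0>0$. Then $\eta_{k_j}\ge \eta_0:=\min\{\tau_{\min},\tilde c\,c_0\}>0$, hence $\tau_{k_j}\ge c\eta_0$, and for $j$ sufficiently large $C\epsilon_{k_j}\le \tfrac{C_1}{4}c_0^{2}$, so the decrease estimate gives
\[
\mathcal{J}(u_{k_j})-\mathcal{J}(u_{k_j+1})\;\ge\;\nu\,c\,\eta_0\cdot\tfrac{C_1}{4}\,c_0^{2}\;>\;0,
\]
contradicting $\mathcal{J}(u_k)-\mathcal{J}(u_{k+1})\to 0$. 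The main obstacle I anticipate is the careful bookkeeping of the two uniformities---the $k$-independence of $C$ (essentially supplied by Remark \ref{still_C_independent_of_y}, but requiring the uniform $L^{\infty}$-bound on $y_k=S(u_k)$ to be invoked explicitly) and the explicit backtracking lower bound $\tau_k\ge c\,\eta_k$, which must be read off carefully from the two possible termination branches of Algorithm \ref{alg:line_search} in the non-robustification case.
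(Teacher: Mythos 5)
Your proof is correct and follows essentially the same route as the paper's: telescoping the Armijo decrease against the lower bound $\mathcal{J}(u_k)\ge 0$, invoking the $u$-independent estimate $\mathcal{J}'(u_k;h_k)\le C\epsilon_k-\tfrac12 q(h_k,h_k)$ from the proof of Proposition \ref{prop:h_descent}, bounding the accepted step length below by $\min\{\tau_{\min},\tilde c\|h_k\|_{L^2(\om)}\}$ (up to the backtracking factor $c$) precisely because the robustification step is eventually never triggered, and using $\epsilon_k\to 0$. The only cosmetic differences are that you phrase the final step as a contradiction along a subsequence rather than reading the conclusion off directly from $\min(\tau_{\min}\|h_k\|^2,\tilde c\|h_k\|^3)\to 0$, and that you absorb the case $\mathfrak{m}(\Omega_{\mathcal{N}}(u_k))=0$ (which the paper treats separately, with the $C\epsilon$ term absent) into the same unified estimate, which is harmless since that term only weakens the bound.
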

\begin{proof}
Note that since the robustification step is activated finitely many times only, we have that the sequence $(\mathcal{J}(u_k))_{k\in\NN}$ is eventually strictly decreasing. Since all its elements are positive, it follows that there exists $\mathcal{J}^*\geq 0$ such that $\mathcal{J}(u_k)\to \mathcal{J}^*$. Assume without loss of generality, that for all but finitely many iterates we have $\mathfrak{m}(\om_{\mathcal{N}}(u_{k}))>0$.
Note that from the Armijo line search we  have for large enough $k$, and a constant $C>0$ independent of $\epsilon$ and $u_{k}$ (see also \eqref{eq:descent2})
\begin{align*}
\mathcal{J}(u_{k+1})-\mathcal{J}(u_k)
&\le \nu \tau \mathcal{J}'(u_{k};h_{k})
\le \nu \tau C\epsilon-\frac{\nu\tau}{2}q(h_{k}, h_{k})\\
&\le \nu C\epsilon -\frac{\nu C_{2}}{2}\min(\tau_{min} \|h_{k}\|_{L^{2}(\om)}^{2},  \tilde{c}\|h_{k}\|_{L^{2}(\om)}^{3}),
\end{align*}
where we used the fact that $1>\tau>\min(\tau_{min}, \tilde{c}\|h_{k}\|_{L^{2}(\om)})$ and the  estimate \eqref{g_coercive_bounded}.
Since $0>\mathcal{J}(u_{k+1})-\mathcal{J}(u_k)\to 0$ as $k\to \infty$ and the fact that $\epsilon$ is also going to zero along the iterates, see the remarks after Algorithm \ref{alg:non_smooth_sqp}, it follows that  $\norm{h_k}_{L^2(\Omega)}\to 0$. Lastly if $\mathfrak{m}(\om_{\mathcal{N}}(u_{k}))=0$ for infinitely many $k$'s then, along that subsequence, still denoted by $(u_k)_{k\in \N}$, we have 
	\begin{align*}
	\mathcal{J}(u_{k+1})-\mathcal{J}(u_k)
	\le \nu \tau \mathcal{J}'(u_{k};h_{k})
	\le -\frac{\nu\tau}{2}q(h_{k}, h_{k})
	\le -\frac{\nu C_{2}}{2}\min(\tau_{min} \|h_{k}\|_{L^{2}(\om)}^{2},  \tilde{c}\|h_{k}\|_{L^{2}(\om)}^{3}),
	\end{align*}
see \eqref{eq:lin_reg_aux_pro} and \eqref{eq:app_pde}. This concludes the proof.
\end{proof}

The next theorem provides more details about the iterates of Algorithm \ref{alg:non_smooth_sqp}. In fact, depending on properties with respect to robustification and the nonsmooth behavior of $\mathcal{N}$, along specific subsequences limit points satisfying different types of stationarity are obtained, respectively.
\begin{theorem}\label{thm:algo_convergence}
Let $\lambda =\alpha$ in the KKT system \eqref{eq:KKT_sub}.
Let $(u_k)_{k\in\NN}$ be a sequence of controls generated by Algorithm \ref{alg:non_smooth_sqp}, with $(y_k)_{k\in \NN}$ the corresponding states. 
	Then the following hold true:
\begin{itemize}
\item[(1)] Suppose the algorithm returns $h_{k_0}=0$ after finitely many iterations, and $u_{k_0}$ and $y_{k_0}$ are the corresponding control and state, respectively. If $\mathfrak{m}(\Omega_{\mathcal{N}}(u_{k_0}))=0$, then the algorithm returns a B-stationary point; otherwise the following conditions are satisfied:
\begin{equation}\label{eq:Finite_stationary}
\begin{aligned}
- \Delta y_{k_0} +\mathcal{N}(\cdot,y_{k_0})  -u_{k_0}=0\;  & \text{ in } \Omega ,\quad y_{k_0}=0\;   \text{ on } \partial \Omega ,\\
-  \Delta p_{k_0} +   \chi_\epsilon p_{k_0} - y_{k_0}= -g\;  & \text{ in } \Omega ,\quad
p_{k_0} =0\;   \text{ on } \partial \Omega,\\
(p_{k_0}+  \alpha u_{k_0}, h)\geq 0\; & \text{ for all } \quad h\in T_{\mathcal{C}_{ad}}(u_{k_0}).
\end{aligned}  
\end{equation}
where $\chi_\epsilon  = \partial_{d} D_\epsilon(y_{k_0};d_\epsilon)$, and $d_\epsilon$ solves the PDE \[-\Delta d + D_\epsilon(y_{k_0};d)=0\; \text{ in } \Omega,\;\; d =0 \text{ on } \partial \Omega.\]
\item[(2)] When the robustification step is activated only finitely many times, the following two cases need to be distinguished:
  \begin{itemize}
  \item[(i)] Along a subsequence where $\mathfrak{m}(\Omega_{\mathcal{N}}(u_{k_l}))=0$ for all $l\in\mathbb{N}$, there exists a further subsequence still denoted by $(u_{k_l},y_{k_l})$, so that $u_{k_l}\to  u^*$ in $L^2(\Omega)$, and $u^*\in\mathcal{C}_{ad}$ satisfies 
  \begin{equation}\label{eq:smooth_stationary}
 \mathcal{J}^\circ (u^*;h)\geq 0 \;\text{   for all } \; h \in T_{\mathcal{C}_{ad}}(u^*),
  \end{equation}
  where $ \mathcal{J}^\circ(u^*;h)$ is the Clarke directional derivative of $\mathcal{J}(\cdot)$ at $u^*$ in the direction $h$, i.e. $\mathcal{J}^\circ (u^*;h)=\sup_{\chi\in \partial \mathcal{J}(u^*) }\langle\chi,h\rangle$.
  \item[(ii)] Along a subsequence where $\mathfrak{m}(\Omega_{\mathcal{N}}(u_{k_l}))>0$ for all $l\in\mathbb{N}$, there exists a further subsequence still denoted by  $(u_{k_l},y_{k_l})$, so that  $u_{k_l}\to  u^*$ in $L^2(\Omega)$, and $u^*\in\mathcal{C}_{ad}$ satisfies the weak stationarity condition.
\end{itemize}
\item[(3)] When the robustification step is activated for infinitely many times, then there exists a subsequence so that the algorithm converges to a C-stationary point along that subsequence.
\end{itemize}
\end{theorem}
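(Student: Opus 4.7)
The proof splits along the three cases. For (1), the interpretation of $h_{k_0}=0$ depends on which subproblem produced it. If $\mathfrak{m}(\Omega_{\mathcal{N}}(u_{k_0}))=0$, Step 2 is bypassed and $h_{k_0}=0$ solves \eqref{eq:lin_reg_aux_pro}; on the inactive set this coincides with \eqref{eq:reg_aux_pro}, so by \cite[Lemma 2.1]{Hint_Suro} the zero direction also solves \eqref{eq:aux_pro}, which is precisely the B-stationarity inequality \eqref{B_stationarity_VI}. If $\mathfrak{m}(\Omega_{\mathcal{N}}(u_{k_0}))>0$, substituting $h_{k_0}=0$ into \eqref{eq:KKT_sub} reads off \eqref{eq:Finite_stationary} directly: the first equation becomes the PDE for $d_\epsilon$, the second is the adjoint with $\chi_\epsilon=\partial_d D_\epsilon(y_{k_0};d_\epsilon)$, and the third and fourth KKT relations with $\lambda=\alpha$ collapse to the stated variational inequality.

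For (2), Proposition \ref{pro:algo_convergence} gives $\|h_k\|_{L^2(\om)}\to 0$. The sequence $(u_k)$ is bounded in $L^\infty$ by $\mathcal{C}_{ad}$, so some subsequence satisfies $u_{k_l}\rightharpoonup u^*$ weakly in $L^2$ with $u^*\in\mathcal{C}_{ad}$, and $y_{k_l}\to y^*=S(u^*)$ in $C(\overline\Omega)$ by continuity of $S$. Strong $L^2$-convergence of the controls is then upgraded via the projection identity derived from the KKT stationarity/complementarity conditions in \eqref{eq:KKT_sub}, combined with strong convergence of the adjoints $p_{k_l}$ (from elliptic regularity applied to the adjoint PDE and the compact embedding of $Y$ into $L^2(\om)$) and with $\|Qh_{k_l}\|_{L^2(\om)}\to 0$.

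On the case-(i) subsequence the adjoint coefficient is $\zeta_{k_l}(x)=\partial_y\mathcal{N}(x,y_{k_l}(x))$ (which coincides with $D_0$ almost everywhere since the nonsmoothness set is null), uniformly bounded in $L^\infty$; extracting an a.e.\ convergent subsequence via Mazur's lemma and using the CPWA structure of $\mathcal{N}$, any weak-$\ast$ limit $\zeta^*$ lies pointwise in the Clarke generalized gradient $\partial\mathcal{N}(\cdot,y^*)$. Passing to the limit in the KKT relations and using $Qh_{k_l}\to 0$ produces $p^*\in Y$ and $\mu^*$ with $p^*+\alpha u^*+\mu^*=0$ together with the sign constraints on $\mu^*$; equivalently, $\langle p^*+\alpha u^*,h\rangle\ge 0$ for all $h\in T_{\mathcal{C}_{ad}}(u^*)$, which by the Clarke chain rule for $\mathcal{J}$ rewrites as $\mathcal{J}^\circ(u^*;h)\ge 0$. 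In case (ii), the same procedure applied to \eqref{eq:KKT_sub} yields, via \eqref{LipschitzD_eps} and the uniform $L^\infty$-bound on $\chi_{\epsilon_l}$, a weak-$\ast$ limit $\bar\zeta\ge 0$ in the limiting adjoint equation; because $D_\epsilon$ is a substitute for the \emph{directional derivative} rather than a derivative of a smoothed network, there is no mechanism to identify $\bar\zeta$ pointwise with an element of $\partial\mathcal{N}(\cdot,y^*)$, so only the first and third conditions of \eqref{eq:C_stationary} survive — precisely weak stationarity. For (3), along the robustification subsequence $u_{k_l}$ solves \eqref{eq:smooth_cost} with $\delta_l\downarrow 0$; writing the classical smooth KKT system and passing to the limit, the coefficient $\nabla_y\mathcal{N}_{\delta_l}(\cdot,y_{k_l})$ is uniformly bounded in $L^\infty$ and by Proposition \ref{prop:approx_smoothing} its a.e.\ subsequential limit equals $\nabla_y\mathcal{N}(\cdot,y^*)\in\partial\mathcal{N}(\cdot,y^*)$ wherever $\mathcal{N}$ is differentiable, yielding \eqref{eq:C_stationary}.

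The principal technical hurdle is the unified treatment of the three limiting adjoint equations: only when the coefficient descends from the gradient of the network itself — the smooth stratum in (2)(i) or the canonical smoothing $\mathcal{N}_\delta$ in (3) — can the weak-$\ast$ limit be placed in $\partial\mathcal{N}$. The substitute derivative $D_\epsilon$ used in (2)(ii) forfeits this identification, which is exactly why the conclusion drops from Clarke/C-stationarity to weak stationarity. A secondary delicate point is the strong $L^2$-convergence of the controls, recovered from weak convergence through the KKT projection formula together with elliptic regularity of the adjoints.
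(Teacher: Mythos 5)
Your proposal is correct and follows essentially the same route as the paper's own proof: case (1) by reading off the KKT system at $h_{k_0}=0$, case (2) by extracting a weakly convergent subsequence, upgrading to strong $L^2$-convergence of the controls through the complementarity relation with $\lambda=\alpha$ and $Qh_{k_l}\to 0$, and then distinguishing whether the weak-$\ast$ limit of the adjoint coefficient can be identified with an element of the Clarke subdifferential (yes on the smooth stratum, no for the substitute derivative $D_\epsilon$, hence only weak stationarity in (2)(ii)), and case (3) by passing to the limit in the smoothed problems as $\delta_l\downarrow 0$. The only cosmetic difference is your invocation of Mazur's lemma for the coefficient limit in (2)(i), where the paper uses Banach--Alaoglu together with upper semicontinuity of $\partial N$; the substance is the same.
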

\begin{proof}
	We prove each of the statement here.
	\begin{itemize}[leftmargin=18pt]
\item[(1)] The first statement on the smooth case is due to the setting of the algorithm. In fact, when $\mathfrak{m}(\Omega_{\mathcal{N}}(u_{k_0}))= 0$, then we have $\Pi_0(u_{k_0};h)=S'(u_{k_0};h)$. When $0$ is a minimizer of \eqref{eq:aux_pro}, then for every $h \in T_{\mathcal{C}_{ad}}(u_{k_0})$, $\mathcal{J}'(u_{k_0};h)\geq \mathcal{J}'(u_{k_0};0)= 0$ due to the property of the minimizer.
When the nonsmooth part has positive measure, $S'(u_{k_0};h)$ is replaced by the smooth approximation $\Pi_\epsilon(u_{k_0};h)$ for some fixed $\epsilon>0$. The conclusion is drawn by rewriting the KKT system in \eqref{eq:KKT_sub} where the equivalence between the third variational inequality in \eqref{eq:Finite_stationary} and the third and the fourth equations in \eqref{eq:KKT_sub} is considered, a proof of which can be found in \cite[Theorem 2.29]{Tro10}.
\item[(2)]  We turn to the first assertion in the second statement. Notice that for a bounded sequence $(u_k)_{k\in \N}\subset \mathcal{C}_{ad}\subset L^2(\Omega)$, we can extract a weakly convergent subsequence denoted by $(u_{k_l})$, and $u_{k_l} \rightharpoonup u^*\in \mathcal{C}_{ad}$.
Let $y_{k_l}$, $p_{k_l}$ be the solutions of the state equation and the adjoint equation corresponding to $u_{k_l}$, respectively, and $y^*$, $p^*$ be the solutions corresponding to $u^*$.
Using standard regularity results on solutions of elliptical PDEs, we have $y_{k_l}\in H^1_0(\Omega)$ and $p_{k_l}\in H^1_0(\Omega)$ for all $l\in\mathbb{N}$, and $(y_{k_l})$ and $(p_{k_l})$ are uniformly bounded in $H^1_0(\Omega)$, respectively.
Using the compact embedding of $H^1_0(\Omega)$ into $L^2(\Omega)$, we conclude that $y_{k_l}\to y^*$ and $p_{k_l}\to p^*$ both in the $L^2(\Omega)$ norm topology.
Referring to the fourth equation in the KKT system \eqref{eq:KKT_sub} for each $u_{k_l}$ in the subsequence, we derive also that $u_{k_l}\to u^*$ strongly in $L^2(\Omega)$ if we choose $\lambda=\alpha>0$. This is because of $Qh_{k_l}\to 0$ (as well as $h_{k_l}\to 0$)  and the relation $Qh_{k_l}+ p_{k_l}= -\mu_{k_l}-\alpha u_{k_l}\to p^*$ in $L^2(\Omega)$, and the connection given by the fourth equation in \eqref{eq:KKT_sub} when $\lambda =\alpha$, i.e.,
\[ \mu_{k_l}=\max(0,\mu_{k_l} +\alpha(u_{k_l}+h_{k_l}-u_b)+\min(0,\mu_{k_l} +\alpha(u_{k_l}+h_{k_l}-u_a)))
\] 
which ensures that $\mu_{k_l}\to \mu^*$ in $L^2(\Omega)$, and subsequently $u_{k_l}\to u^*$ in $L^2(\Omega)$.
Since $y_{k_l}\in L^\infty(\Omega)$ and $N(\cdot)$ is Lipschitz,  $N'(y_{k_l})\in L^\infty(\Omega)$ are uniformly bounded. Using the Banach-Alaoglu theorem, we have $N'(y_{k_l}) \overset{*}{\rightharpoonup}\zeta $ for some $\zeta\in L^\infty(\Omega)$.
Using the definition of the Clarke subgradient, we have $\zeta\in \partial N(y^*)$ by upper semicontinuity of $\partial N(\cdot)$; see, e.g., \cite{aubin2009set}.

Using the above convergence properties, we arrive at the system 
\begin{equation}\label{eq:smooth_case}
\begin{aligned}
- \Delta y^* +\mathcal{N}(\cdot,y^*)  -u^*=0\;  & \text{ in } \Omega ,\quad y^*=0\;   \text{ on } \partial \Omega ,\\
-  \Delta p^* +   \zeta p^* - y^*= -g\;  & \text{ in } \Omega ,\quad
p^* =0\;   \text{ on } \partial \Omega,\\
\langle p^*+  \alpha u^*, h\rangle\geq 0\; & \text{ for all } \quad h\in T_{\mathcal{C}_{ad}}(u^*),
\end{aligned}  
\end{equation}
where the same argument for the third variational inequality holds as in Case $(1)$. 
By the definition of Clarke's generalized directional derivative, we then conclude that 
\[ \mathcal{J}^\circ (u^*;h)\geq \langle p^*+  \alpha u^*, h\rangle\geq 0 \;\text{   for all } \; h \in T_{\mathcal{C}_{ad}}(u^*). \]
%\\

For assertion $(ii)$, we use the same argument as in $(i)$ to have  $y_{k_l}\to  y^*$, $p_{k_l}\to  p^*$, and $u_{k_l}\to  u^*$ in $L^2(\Omega)$. Recall that $h_k\to 0$ in $L^2(\Omega)$ in the KKT system in \eqref{eq:KKT_sub}. Now we show that $(\partial_{d} D_{\epsilon_k}(y_k;d_{\epsilon_k}))_{k\in\mathbb{N}}$ is a bounded sequence in $L^\infty(\Omega)$. Note that $N(\cdot)$ is Lipschitz continuous and $y_k\in H^1(\Omega)\cap L^\infty(\Omega)$, and $D_{\epsilon_k}(y_k;\cdot)$ is $C^1$ smooth and therefore Lipschitz with respect to the second variable, from which we have that $\partial_d D_{\epsilon_k}(\cdot,\cdot)$ is uniformly bounded with respect to both variables, i.e., for all $k\in\mathbb{N}$ we have $\abs{\partial_d D_{\epsilon_k}(y_k,d_{\epsilon_k})}\leq M$. Thus, we have $\partial_d D_{\epsilon_k}(y_k,d_{\epsilon_k})\in L^\infty (\Omega)$ for all $k\in\mathbb{N}$. Now using the Banach-Alaoglu theorem, we conclude that there exists a weakly star convergent sub-sequence of $\partial_d D_{\epsilon_{k_l}}(y_{k_l},d_{\epsilon_{k_l}})$, i.e., there exists $\zeta\in L^\infty(\Omega)$ such that $\partial_d D_{\epsilon_{k_l}}(y_{k_l},d_{\epsilon_{k_l}}) \overset{*}{\rightharpoonup} \zeta$  (still denoted using the same indices).
Passing to the limit in the system \eqref{eq:KKT_sub} with respect to this subsequence, yields the conclusion.

\item[(3)]  For the third statement, we take the subsequence whose elements correspond to the control and state variables for activated robustification. This results in a sequence of optimal control problems with respect to the regularized PDEs in \eqref{eq:smooth_cost}.
Since in the $l^{th}$ robustification step, $\delta_{l+1}=\tilde{c}\delta_l$  for some $\tilde{c}\in (0,1)$ we infer $\delta_{l} \to 0$ as $l\to \infty$. This yields a C-stationary point in the limit as $l\to\infty$. For the associated analytical details on the convergence of the smoothed optimal control problems as $\delta_l\to 0$, we refer to the paper \cite{DonHinPapVoe22}.
	\end{itemize}
\end{proof}

We note that Case (1) of Theorem \ref{thm:algo_convergence} rarely occurs in practice and yields a desirable $B$-stationary point if $\mathcal{N}$ is differentiable at $u_{k_0}$; otherwise an approximate version of a $C$-stationary point is reached. Case (2) either yields a form of $C$-stationary point in (i), or an element satisying weaker conditions in (ii). The latter case produces the least favorable limit point in terms of stationarity. Finally, Case (3) provides a point satisfying $C$-stationarity conditions, which are weaker than $B$-stationarity conditions.

\subsection{Practical aspects concerning Algorithm \ref{alg:non_smooth_sqp}} We recall that for the sake of presentation we confine ourselves to the case where $\mathcal{C}_{ad}$ is given by box constraints; see \eqref{Cad_box}. We point out that such box constraints are relevant in numerous applications in PDE constrained optimization.

In order to account for possible violations of the control constraints in the practical numerical realization (e.g.\ due to inexact solves), we use the following merit function for the line search algorithm
\begin{equation}\label{eq:merit}
E_k(\tau):=\mathcal{J}(u_k+\tau h)+\kappa \Psi(u_k + \tau h),
\end{equation}
where 
\[ \Psi(u):= \norm{\max(0,u-u_b)}_{L^2(\Omega)}+\norm{\min(0,u-u_a)}_{L^2(\Omega)},\]
evaluates the violation of the box constraint.
Here $\kappa>0$ is the parameter from Algorithm \ref{alg:non_smooth_sqp}. The above merit function replaces the objective function $\mathcal{J}$ in \eqref{eq:ls_cost} in Algorithm \ref{alg:line_search}. Thus, we need to guarantee a descent direction for \eqref{eq:merit}. In our setting the latter is connected to a practical stopping rule for terminating the utilized solver for \eqref{eq:KKT_sub}. Notice that if $u_k\in \mathcal{C}_{ad}$, and the subproblem \eqref{eq:smo_reg_aux_pro} in particular the constraint has been settled with satisfactory accuracy, we shall have $u_k+h\in \mathcal{C}_{ad}$ as well. Then $(\Psi(u_k + h)- \Psi(u_k))=0$ and the standard Armijo line search is applied. This is often the case when a primal-dual active-set (PDAS) method (see, e.g., \cite{HIK}) is applied to box constraints as we will explain in detail in the next section. In case one aims at only approximately satisfying the constraint along the iterates, i.e. $(\Psi(u_k + h)- \Psi(u_k))>0$, a similar termination condition for the solver of the sub-problem \eqref{eq:smo_reg_aux_pro} as in \cite[Algorithm 1, (4.62)]{DonHinPap20} can be applied. It consists of the following inequalities:
\begin{equation}\label{eq:stopping_rule}
	\begin{aligned}
	& \mathcal{J}^\prime(u_k; h) + \kappa (\Psi(u_k + h)-\Psi(u_k))
	\leq - \xi  q(h,h), \\
	&\text{and}\;\quad \Psi(u_k + h) \leq (1- \xi)\Psi(u_k), 
	\end{aligned}\quad	\text{  for some } \quad \xi\in (0,1).
\end{equation}
The first inequality above guarantees a descent direction for the merit functional in every iteration. Whereas the second condition enforces uniform decay of the constraint violation along the iterations. Observe that if $\Psi(u_k)=0$, then $\Psi(u_l)=0$ for all $l>k$. The underlying assumption here is that the solver for \eqref{eq:KKT_sub} is able to achieve sufficiently accurate solutions.
	
Notice also that in Algorithm \ref{alg:non_smooth_sqp}, we require $\tau_0<\tau\leq 1$,
and observe further that by solving the system \eqref{eq:KKT_sub} exactly we obtain a direction $h$ with $u_k+h\in \mathcal{C}_{ad}$. Hence if $u_{k}\in \mathcal{C}_{ad}$ and the solution for \eqref{eq:KKT_sub} is accurate, then
this implies that  $u_k+\tau h\in \mathcal{C}_{ad}$ for all $\tau\in (0,1]$ by convexity of $\mathcal{C}_{ad}$. Consequently all the iterates are feasible, and the merit functional \eqref{eq:merit} is equivalent to the reduced functional provided that all the systems are solved exactly. Indeed, in our experiments, we use the PDAS algorithm which can compute highly accurate solutions for \eqref{eq:KKT_sub}.

\subsection{Details on the PDAS Algorithm}
\label{subsec:algo}

In the following, we provide some details on the implementation of PDAS in Algorithm \ref{alg:non_smooth_sqp}, as it is employed in two different steps. First we utilize PDAS to solve the KKT system of \eqref{eq:lin_reg_aux_pro} for initialization, which is:
\begin{equation}\label{eq:linear_kkt}
\begin{aligned}
(K^{-1}+\alpha \text{Id})h  +p_0 +\alpha u +\mu =0,\\
\mu -\max(0,\mu+\lambda(u+h-u_b))-\min(0,\mu+\lambda(u+h-u_a))=0,
\end{aligned}
\end{equation}
with $\lambda>0$ fixed. In practice we typically set $\lambda:=\alpha>0$. Here $Qh:=(K^{-1}+\alpha \text{Id})h$, with $Q:L^2(\Omega)\to L^2(\Omega)$ linear and continuous, i.e., $Q\in\mathcal{L}(L^2(\Omega))$, is the derivative of $\frac{1}{2}q(h,h)$ and  $K^{-1}\in \mathcal{L}(L^{2}(\Omega),B)$ for some $B\subset L^2(\Omega)$ with
\[
K\in\mathcal{L}(B,L^{2}(\Omega)),\quad K t=(-\Delta + (D_0(y))^*(-\Delta + (D_0(y))t\text{ for } t\in B.
\]
In fact, for given $h\in L^2(\Omega)$ and sufficiently smooth $\Omega$, $Kt=h$ is realized via finding $(s,t)\in(H_0^1(\Omega)\cap H^2(\Omega))^2$ such that
$$
-\Delta s + D_0(y)s=h\quad\text{and}\quad -\Delta t + D_0(y)t=s.
$$
Thus, $B=H_0^1(\Omega)\cap H^2(\Omega)$.

Computationally, \eqref{eq:linear_kkt} is realized as follows: we first introduce an auxiliary variable $t\in B$ and $K^{-1}h=t$. Then, in every iteration of PDAS, we solve the linear system \eqref{eq:complimentary} below. For this purpose let $\mathcal{A}_+$ denote an estimate for the upper active set $\{x\in\Omega:u(x)+h^*(x)=u_b(x)\}$, or short $\{u+h^*=u_b\}$, at the solution $h^*\in L^2(\Omega)$ of \eqref{eq:lin_reg_aux_pro} and analogously for $\mathcal{A}_-$ and the lower active set $\{u+h^*=u_a\}$, with $\mathcal{A}_+\cap\mathcal{A}_-=\emptyset$. Further, $\mathcal{I}=\Omega\setminus \mathcal{A}$, with $\mathcal{A}:=\mathcal{A}_+\cup\mathcal{A}_-$, is an estimate of the inactive set $\{u_a<u+h^*<u_b\}$.

The resulting linear system reads
\begin{equation}\label{eq:complimentary}
\begin{aligned}
Kt-h=0,\\	
t+\alpha h +\mathcal{J}_0'(u) + \mu=0,\\
h|_{\mathcal{A}_+}=u_b|_{\mathcal{A}_+}-u|_{\mathcal{A}_+}, \quad h|_{\mathcal{A}_-}=u_a|_{\mathcal{A}_-}-u|_{\mathcal{A}_-},\quad\mu|_\mathcal{I}=0,
\end{aligned}
\end{equation}
where $\mathcal{J}_0'(u)=p_0+\alpha u$, and $v|_{\mathcal{S}}$ denotes the restriction of a function $v:\Omega\to\mathbb{R}$ to a set 
$\mathcal{S}\subset\Omega$. Notice that only $h|_{\mathcal{I}}$ and $\mu|_{\mathcal{A}}$ are our desired unknown variables now.

Next, let $E_{\mathcal{I}}$ denote the extension-by-zero operator from $\mathcal{I}$ to $\Omega$. Then, $E_{\mathcal{I}}^*$ is the restriction operator from $\Omega$ to $\mathcal{I}$. The operators $E_{\mathcal{A}_+}$, $E_{\mathcal{A}_+}^*$, $E_{\mathcal{A}_-}$, $E_{\mathcal{A}_-}^*$ and $E_{\mathcal{A}}$, $E_{\mathcal{A}}^*$ are defined analogously. Note that, for instance, $\mu|_{\mathcal{I}}=E_\mathcal{I}^*\mu$. For convenience, below we will use both notations for restriction operators. With these definitions and noting from the second and third equation in \eqref{eq:complimentary} that
$$
h|_\mathcal{I}=-\alpha^{-1}(t|_\mathcal{I}+\mathcal{J}_0'(u)|_\mathcal{I}),
$$
and
$$
\mu|_\mathcal{A}=-t|_\mathcal{A}-\alpha E^*_{\mathcal{A}}(E_{\mathcal{A}_+}(u_b-u)|_{\mathcal{A}_+}+
E_{\mathcal{A}_-}(u_a-u)|_{\mathcal{A}_-})-\mathcal{J}_0'(u)|_\mathcal{A},
$$
we can reduce the system in \eqref{eq:complimentary} to solving 
\begin{equation}\label{eq:mh:5}
(K+\alpha^{-1}E_\mathcal{I}E_\mathcal{I}^*)t=E_{\mathcal{A}_+}(u_b-u)|_{\mathcal{A}_+}+
E_{\mathcal{A}_-}(u_a-u)|_{\mathcal{A}_-}-\alpha^{-1}E_\mathcal{I}\mathcal{J}_0'(u)|_\mathcal{I}.
\end{equation}
for $t\in B$. Backward substitution then yields $h|_\mathcal{I}$ and $\mu|_\mathcal{A}$.

Utilizing the above considerations, PDAS solves \eqref{eq:linear_kkt} iteratively by estimating the active and inactive sets and solving the associated linear system of the type \eqref{eq:mh:5} in every iteration. 
In this context, the active set estimation works as follows: Assume that a current iterate $(h^l,\mu^l)\in L^2(\Omega)^2$, $l\in\mathbb{N}$, is available. Then the next active and inactive set estimates are determined by
	\begin{align*}
	\mathcal{A}_+^{l+1}&:=\{\mu^l+\lambda(u+h^l-u_b)>0\},  &
		\mathcal{A}_-^{l+1}&:=\{\mu^l+\lambda(u+h^l-u_a)<0\}, \\
	\mathcal{A}^{l+1}&:=\mathcal{A}_+^{l+1}\cup\mathcal{A}_-^{l+1}, &
		\mathcal{I}^{l+1}&:=\Omega\setminus\mathcal{A}^{l+1}.
	\end{align*}
These sets are then used in \eqref{eq:mh:5}, respectively \eqref{eq:complimentary}, to obtain $(h^{l+1},\mu^{l+1})$. Unless some stopping rule is satisfied, PDAS returns to the next set estimation. We refer to \cite{HIK} for more details on PDAS including convergence considerations. The choice of numerical solvers for \eqref{eq:mh:5} may depend on the size of the system after discretization. In our situation, the standard Matlab backslash is sufficient already.	In our tests below, the PDAS iterations are terminated if the $L^2(\Omega)$-norm residual of the second equation in \eqref{eq:linear_kkt} drops below $10^{-16}$ or a maximum of $50$ iterations is reached.

The second application of PDAS is connected to numerically solving the nonlinear system in \eqref{eq:KKT_sub}.  
 Our strategy here is to decouple the system  \eqref{eq:KKT_sub} into the following two subsystems:
\begin{equation}\label{eq:KKT_sub1}\left\{
\begin{aligned}
&-\Delta d_\epsilon + D_\epsilon(y;d_\epsilon)-h=0\; \text{ in } \Omega,\;\text{ and}\; d_\epsilon =0 \text{ on } \partial \Omega,\\
&-\Delta p_\epsilon + \partial_{d_\epsilon}(D_\epsilon(y;d_\epsilon)) p_\epsilon=y-g\; \text{ in } \Omega,\;\text{ and}\; p_\epsilon =0 \text{ on } \partial \Omega,
\end{aligned}\right.
\end{equation}
and
\begin{equation}\label{eq:KKT_sub2}\left\{
\begin{aligned}
&Q h+ p_\epsilon  +\mu=-\alpha u,\\
&\mu -\max(0,\mu+\lambda(u+h-u_b))-\min(0,\mu+\lambda(u+h-u_a))=0.
\end{aligned}\right.
\end{equation}
Then, in our implementation of Step 2 of Algorithm \ref{alg:non_smooth_sqp}, while the $L^2(\Omega)$ residual norm of the system \eqref{eq:KKT_sub} is larger than $10^{-16}$, or the iteration count is smaller than $50$,
we use a consecutive and iterative way to implement the following:
\begin{itemize}[leftmargin=20pt]
\item[(i)] First we run Newton algorithm for \eqref{eq:KKT_sub1} to get an update to $d_\epsilon$ and $p_\epsilon$ for a fixed $h$. The algorithm can be initialized using the solution from its last round, and zeros for the first round;

\item[(ii)] Using the newly computed $d_\epsilon$ and $p_\epsilon$ from (i), we apply PDAS to \eqref{eq:KKT_sub2} to obtain updates of $\mu$ and $h$. The PDAS step is similar to the one we have described above. Only the terms pertinent to the new quantities in \eqref{eq:KKT_sub1} and \eqref{eq:KKT_sub2} are adapted. Especially, now $Q$ is associated with the functional $q_\epsilon(h,h)=\langle\Pi_{\epsilon}(u,\cdot)h,\Pi_{\epsilon}(u,\cdot)h\rangle+ \alpha \langle h,h\rangle$. However, we note here that in our experiments, for each PDAS iteration for \eqref{eq:KKT_sub2}, we found that using simply the quadratic functional $q$ from the initialization step above gives almost the same convergence behavior than using $q_\epsilon$ connected to $\Pi_{\epsilon}$. 
\end{itemize}

\section{Numerical results}\label{sec:numerics}
In this section, we demonstrate the practical performance of our proposed algorithm for solving optimal control problems with nonsmooth partial differential equations which contain ReLU network components.
\paragraph{\textbf{Parameter setting of Algorithm \ref{alg:non_smooth_sqp}}}
In our algorithm, we set $\eta=10^{-16}$, $\tau_{min}=10^{-16}$, $\epsilon_0=\delta_0=10^{-1}$, $c=0.6$, $c_1=c_2=0.1$, $\beta=1.1$, $\tilde{c}=0.5$. The parameter $\nu$ will be set depending on the value of $\alpha$ and the respective example.
In all the tested examples, we use  finite differences  for the PDE discretization, and in particular the standard five-point stencil for  the discrete Laplacian. The algorithm is terminated if $\norm{h}_{L^2(\Omega)}\leq 10^{-16}$.
For solving both the state equation and the adjoint equation, we use a (semismooth) Newton method \cite{HIK}, with the stopping rule on checking the $H^{-1}(\Omega)$-norm of the residual. Specifically, if the residual norm is smaller than $10^{-16}$ or the number of iterations is bigger than $50$, then we stop the Newton solver.
Numerical calculations were performed on a laptop with Intel Core i7-10850H CPU and 64GB memory using Matlab R2020b.

\subsection{Application to PDE with single max-function} 
Here we first show the result of our algorithm when applied to an example presented in \cite{christof}.
We choose $\Omega= (0,1)\times (0,1)$ to be the unit square, and design the exact solution and its adjoint state of the optimal control problem to be
\begin{equation}\label{eq:exact_sol}
y=p= \left\{ 
\begin{aligned}
 \big(\big(x_1-\frac{1}{2}\big)^4 +\frac{1}{2}  \big(x_1-\frac{1}{2}\big)^3\big)\sin (\pi x_2) &\quad x_1<\frac{1}{2},\\
 0 &\quad x_1\geq \frac{1}{2}.
\end{aligned}\right.
\end{equation}
No active control constraint is considered in this example for simplicity.
The state equation is given by the following second-order semilinear elliptic PDE:
\[ -\Delta y +\max(0,y)=u+f \text{   in  } \Omega, \quad \text{ and } y=0 \text{ on } \partial\Omega, \]
where, given $y$ and $p$, the optimal control $u$ and the given function $f$ can also be explicitly calculated using the KKT condition of the optimal control problem. Note that introducing a given function $f$ into the PDE does neither change the analysis nor the algorithm.
Both $y$ and $p$ are twice continuously differentiable and have the value zero on the right half of $\Omega$. Therefore, the nonsmoothness of the $\max$-function in the state equation at the solution appears on a set of positive measure in this example. This renders the control-to-state map nonsmooth at the solution $(u,y)$. We test our algorithm by using different discretization sizes $\mathrm{dx}$ (uniform in both dimensions) and with respect to variants of the control cost $\alpha$.
Particularly, in all the numerical tests provided in this paper,
	we consider the following $C^2$-smooth approximation of the max-function in $\mathcal{D}_\epsilon$:
	\begin{equation}\label{eq:max_reg}
	\sigma_\epsilon(t)=\left\{ \begin{array}{ll}
	t - \frac{\epsilon}{2}, & \text{ if } t \geq \epsilon,\\
	\frac{t^3}{\epsilon^2} -\frac{t^4}{2\epsilon^3}, & \text{ if } t \in (0,\epsilon),\\
	0, & \text{ if } t \leq 0,
	\end{array} \right.
	\end{equation}
for given $\epsilon>0$.
The numerical results are reported in Table \ref{tab:max_example}.
\renewcommand{\arraystretch}{1.2}
\begin{table}[!ht]
	\begin{center}
		\resizebox{0.75\textwidth}{!}{
			\begin{tabular}{ |l|lll|lll|}
				\hline
				&\multicolumn{3}{c|}{$\alpha=10^{-1}$, $\nu=0.9$  }&  \multicolumn{3}{c|}{$\alpha=10^{-2}$, $\nu=0.9$  }   \\ \hline
				Mesh size	&	Cost	& $\norm{u-u_h}/\norm{u}$	&  $\norm{y-y_h}/\norm{y}$ 	& Cost		& $\norm{u-u_h}/\norm{u}$	&  $\norm{y-y_h}/\norm{y}$  \\ \hline
				dx=1/16	&	{$0.0261 $} 	& 	{$ 0.0506$}  &	{$0.0234$} 	&{$0.0263$} 	& 	{$ 0.044$}  &	{$0.2021$}  \\ \hline
				dx=1/32	&	{$0.0329  $} 	& 	{$ 0.013$}  &	{$ 0.0058$} 	&	{$0.0331  $} 	& {$ 0.0116$}  &	{$ 0.052$} 	  \\ \hline
				dx=1/64	&	{$0.0368  $} 	& 	{$ 0.0029$}  &	{$ 0.0012$} 	&{$0.037  $} 	& 	{$0.003$}  &	{$ 0.0131$} 	 \\ \hline	
				dx=1/128  &	{$0.0389  $} 	& 	{$ 6.07\times10^{-4}$}  &	{$ 1.03\times10^{-4}$} 	&{$0.039  $} 	& 	{$7.1\times10^{-4}$}  &	{$ 0.0031$} 	 \\ \hline	
				&\multicolumn{3}{c|}{$\alpha=10^{-3}$, $\nu=0.9$  }&  \multicolumn{3}{c|}{$\alpha=10^{-4}$, $\nu=0.9$  }   \\ \hline
		Mesh size	&	Cost	& $\norm{u-u_h}/\norm{u}$	&  $\norm{y-y_h}/\norm{y}$ 	& Cost		& $\norm{u-u_h}/\norm{u}$	&  $\norm{y-y_h}/\norm{y}$  \\ \hline
				dx=1/16	&	{$0.0281 $} 	& 	{$ 0.0216$}  &	{$0.7755$} 	&{$0.0455$} 	& 	{$ 0.0057$}  &	{$1.3518$}  \\ \hline
				dx=1/32	&	{$0.0349  $} 	& 	{$ 0.0057$}  &	{$ 0.2003$} 	&	{$0.0523  $} 	& {$ 0.0015$}  &	{$ 0.3501$} 	  \\ \hline
				dx=1/64	&	{$0.0387  $} 	& 	{$ 0.0015$}  &	{$ 0.0511$} 	&{$0.0562  $} 	& 	{$ 3.94\times10^{-4}$}  &	{$ 0.0896$} 	 \\ \hline
				dx=1/128	&	{$0.0408  $} 	& 	{$ 3.73\times10^{-4}$}  &	{$ 0.0128$} 	&{$0.0582  $} 	& 	{$1.0\times10^{-4}$}  &	{$ 0.0226$} 	 \\ \hline	
				&\multicolumn{3}{c|}{$\alpha=10^{-5}$, $\nu=0.9$  }&  \multicolumn{3}{c|}{$\alpha=10^{-6}$, $\nu=0.9$  }   \\ \hline
		Mesh size	&	Cost	& $\norm{u-u_h}/\norm{u}$	&  $\norm{y-y_h}/\norm{y}$ 	& Cost		& $\norm{u-u_h}/\norm{u}$	&  $\norm{y-y_h}/\norm{y}$  \\ \hline
	dx=1/16	&	{$0.2194 $} 	& 	{$0.0011 $}  &	{$1.592$} 	&{$1.9592$} 	& 	{$ 2.35\times10^{-4} $}  &	{$1.6832$}  \\ \hline
	dx=1/32	&	{$0.2266  $} 	& 	{$ 3.08\times10^{-4}$}  &	{$ 0.4136$} 	&	{$1.9702  $} 	& {$ 8.32\times10^{-5}$}  &	{$ 0.4421$} 	  \\ \hline
	dx=1/64	&	{$0.2305  $} 	& 	{$ 8.06\times10^{-5}$}  &	{$ 0.1061$} 	&{$1.9744  $} 	& 	{$ 2.31\times10^{-5}$}  &	{$ 0.1159$} 	 \\ \hline	
	dx=1/128	&	{$0.2326  $} 	& 	{$ 2.06\times10^{-5}$}  &	{$ 0.027$} 	&{$1.9765  $} 	& 	{$ 6.06\times10^{-6}$}  &	{$ 0.0319$} 	 \\ \hline	
					&\multicolumn{3}{c|}{$\alpha=10^{-7}$, $\nu=0.9$  }&  \multicolumn{3}{c|}{$\alpha=10^{-8}$, $\nu=0.9$  }   \\ \hline
	Mesh size	&	Cost	& $\norm{u-u_h}/\norm{u}$	&  $\norm{y-y_h}/\norm{y}$ 	& Cost		& $\norm{u-u_h}/\norm{u}$	&  $\norm{y-y_h}/\norm{y}$  \\ \hline
	dx=1/16	&	{$19.3566 $} 	& 	{$3.27\times10^{-5} $}  &	{$1.7302$} 	&{$193.3311$} 	& 	{$ 3.4782\times10^{-6} $}  &	{$1.8797$}  \\ \hline
	dx=1/32	&	{$19.4059  $} 	& 	{$ 2.03\times10^{-5}$}  &	{$ 0.4713$} 	&	{$193.7626  $} 	& {$ 3.06\times10^{-6}$}  &	{$ 0.6281$} 	  \\ \hline
	dx=1/64	&	{$19.4128  $} 	& 	{$ 7.43\times10^{-6}$}  &	{$ 0.1330$} 	&{$193.7970  $} 	& 	{$ 2.09\times10^{-6}$}  &	{$ 0.3067$} 	 \\ \hline	
	dx=1/128	&	{$19.4151  $} 	& 	{$ 2.09\times10^{-6}$}  &	{$ 0.0468$} 	&{$193.8011  $} 	& 	{$ 7.65\times10^{-7}$}  &	{$ 0.2340$} 	 \\ \hline					
		\end{tabular}}
		\vspace{1em}
		\caption{Convergence performance of the proposed algorithm (for the single-max function problem) in terms of mesh size $\mathrm{dx}$ and the regularization parameter $\alpha$. The exact solution $y$ is given in \eqref{eq:exact_sol}, and $u=p/\alpha$ can be informed through the KKT system as we assumed that the constraint is not active.}
		\label{tab:max_example}
	\end{center}
\end{table}
We observe here that our algorithm can achieve quadratic convergence rates with respect to the mesh size $\mathrm{dx}$ as in \cite{christof}. When $\alpha$ becomes smaller, the convergence of the state variable becomes harder. It was reported in \cite{christof} that the semismooth Newton type method used there, achieved no convergence when $\alpha =10^{-6}$ or smaller. However, our method is capable of preserving the quadratic convergence when $\alpha= 10^{-6}$. For the case $\alpha <10^{-6}$, as provided in the last two groups in Table \ref{tab:max_example}, quadratic convergence rate can be observed in the case of $\alpha=10^{-7}$, and a suboptimal convergence rate appears when $\alpha =10^{-8}$. This shows that the proposed method is more robust for ill-conditioned problems.

In the following test examples, we also show that our proposed algorithm copes well with semilinear PDEs whose nonlinearities are general ReLU network functions. In this sense, our proposed method can be considered a genuine nonsmooth solver for such type of problems.

\subsection{Application to general multilayer ReLU network PDEs}
We consider  a ReLU neural network function $\mathcal{N}:\R \to \R$, with two-hidden-layers:
\begin{equation*}
\mathcal{N}(y)=\sum_{l=1}^L w_2^l\sigma\left(\sum_{k=1}^K w_1^{l,k} \sigma(w_0^k y + b_0^k) +b_1^l\right) + b_2.
\end{equation*}

\begin{figure}[htbp]
    \centering
\begin{minipage}[c]{\textwidth}
\begin{center}
	\includegraphics[width=0.4\textwidth]{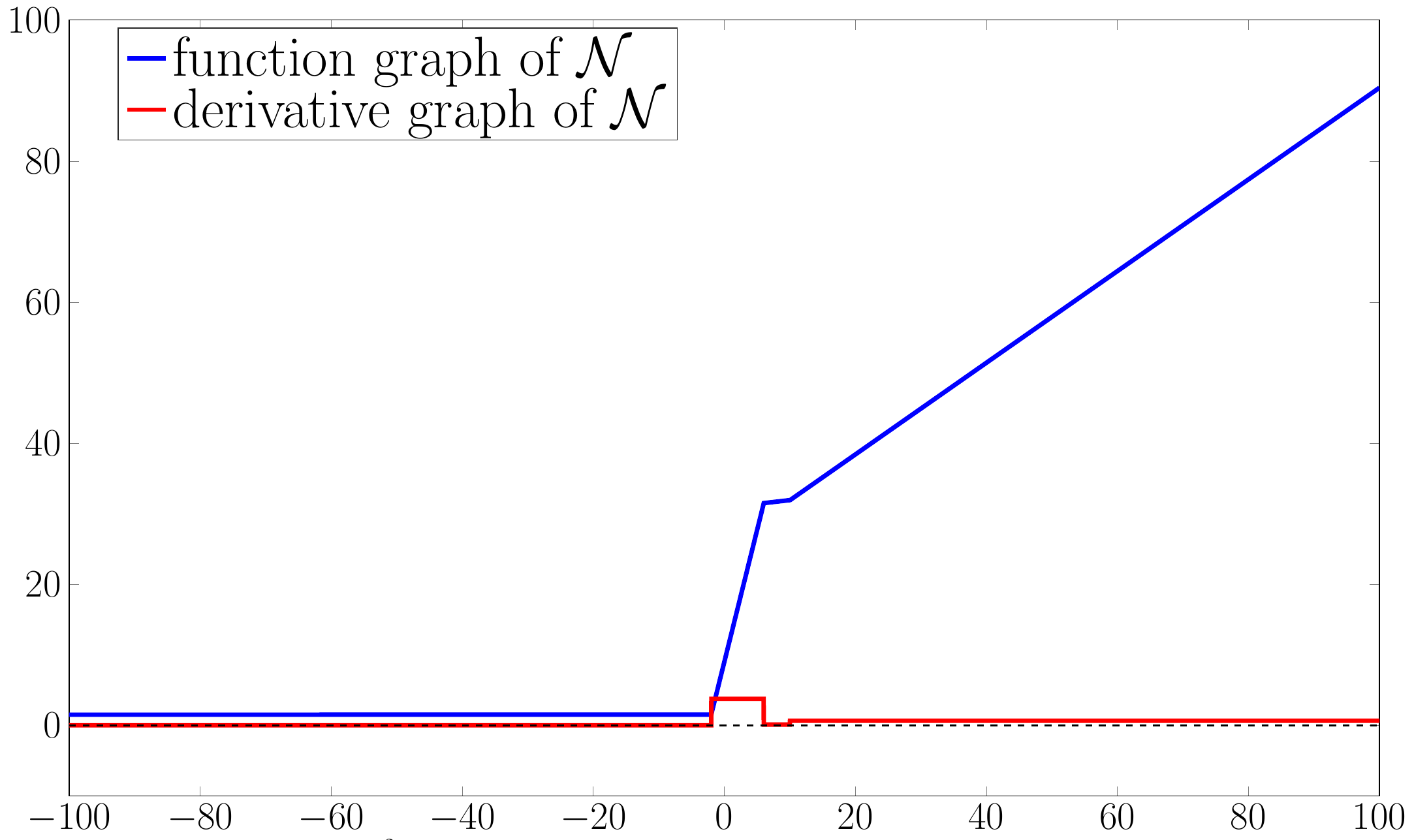}
	\includegraphics[width=0.4\textwidth]{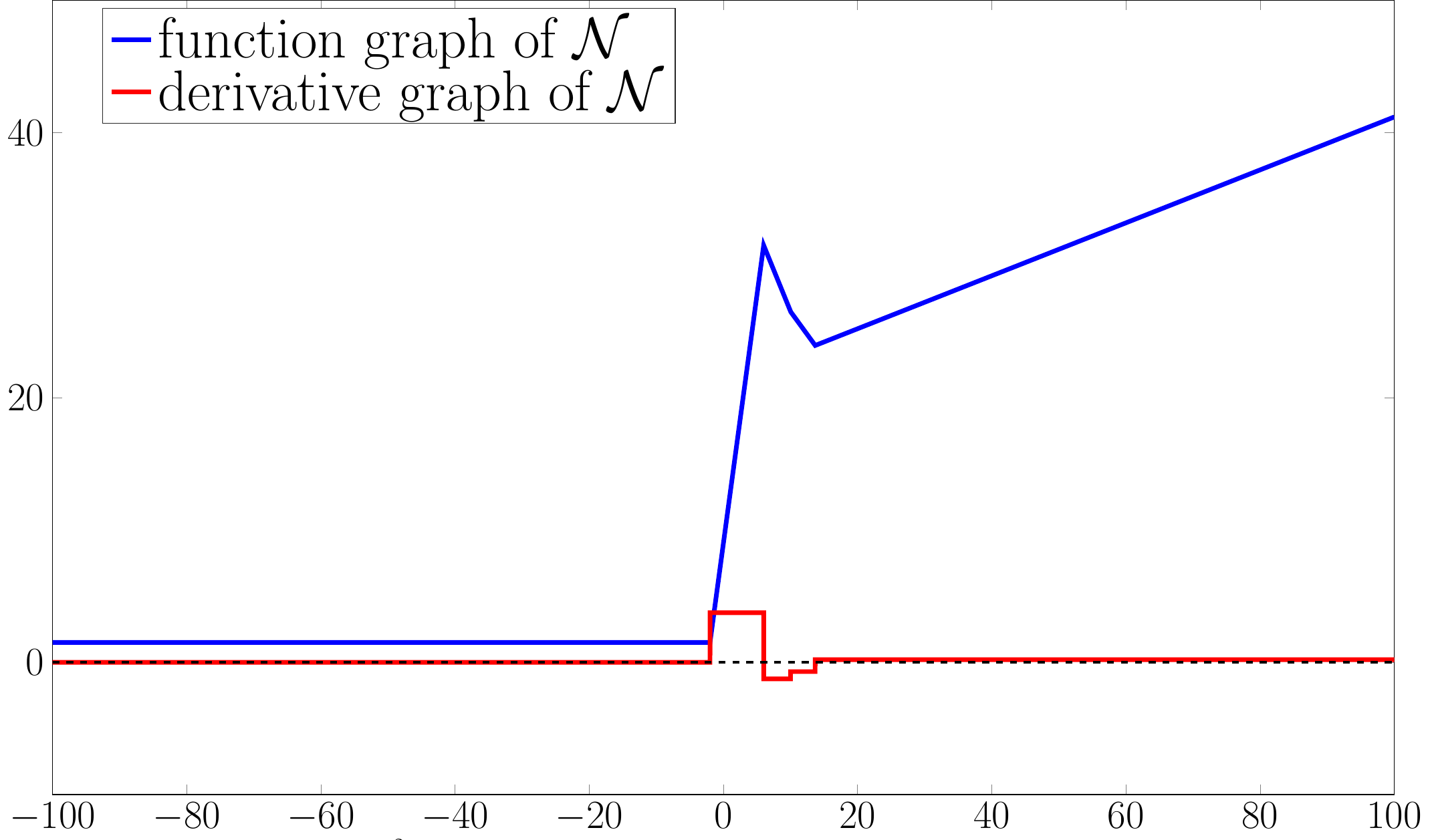}
\end{center}	
\end{minipage}\hfill
\vspace{1em}
 \begin{minipage}[c]{.6\textwidth}
 \centering
\resizebox{0.9\textwidth}{!}{
			\begin{tabular}{ |lll|lll|lll|}
				\hline
				\multicolumn{6}{|c|}{Weight parameters } 	& \multicolumn{3}{c|}{Bias  }    	\\ \hline
				{$w_0^1$}	& {$w_0^2$}	& {$w_0^3$} &\multicolumn{3}{c|}{} 	& {$b_0^1$  } & {$b_0^2$  }  & {$b_0^3$  } 	   \\ \hline
				$5$ & $0.1$  &  $10 $ & $-$ & $ -$ & $  -$ & $ 10$ &  $ -1$ & $   -60$      \\ \hline
				{$w_1^{1,1}$} 	& {$w_1^{1,2}$} 	& {$w_1^{1,3}$} 	&  {$w_1^{2,1}$} 	&  {$w_1^{2,2}$}	&  {$w_1^{2,3}$}	& $b_1^1$    &  $b_1^2$ &   \\ \hline
				$0.3$ & $2$  &  $-0.16$ & $  0.1 $ & $1$ &  $  -0.03\; (-0.12)$ & $ 0$ &  $  1$ & $   -$    \\ \hline
				$2$ & $-$  &  $-$ & $  1.5$ &$ -$ &  $  -$ &  $ 0$ &  $  -$ & $   -$  	 \\ \hline
		\end{tabular}}
\end{minipage}
\caption{Graphs and corresponding weights for the monotone and nonmonotone ReLU networks. The only difference   is their $w^{2,3}$-value ($-0.03$ vs $-0.12$).}
\label{fig:network_function1}
\end{figure}

Our  results here address optimal control problems for nonsmooth semilinear elliptic PDEs with both, monotone and nonmonotone network functions, respectively, as shown in Figure \ref{fig:network_function1}. 
For the sake of providing quantitative observations, we generate synthetic data by fixing the solution of the PDE. The data is generated from the function $ g_0=200\sin(\pi x)\sin(\pi y)$ and the control $u_0=\min(u_b,\max(u_a,-\Delta g_0+\mathcal{N}(g_0)))$ for $u_a=-1000$, $u_b=1000$, giving rise to a state $y_0$. 
 In this example, we choose $\Omega= (0,2)\times (0,2)$.
Then the function $g$ in the objective of \eqref{P} is computed numerically via the KKT-system for $(u_0,y_0)$.
Both test examples, respectively containing monotone and nonmonotone network functions, are generated in this way. 
We stress that the optimal control of PDEs involving ReLU neural network components, as proposed and studied in this paper, is a new feature in the literature, and our proposed algorithm is specific for the optimization with these type of PDE constraints. For this reason we refrain from comparing our algorithm with other (less tailored) methods for this set of examples.

Our numerical results are summarized in Table \ref{tab:results}. Here we collect three cases of discretization sizes with respect to varying cost parameter $\alpha$.
In all cases, we observe that $\mathfrak{m}(\Omega_{\mathcal{N}})$ is not zero at the solution rendering the control-to-state map genuinely nonsmooth. As a consequence, Step 2 in Algorithm \ref{alg:non_smooth_sqp} is always active.
In Table \ref{tab:results}, 'Cost' denotes the value of the objective functional of the optimal control problem at the final iterate, and 'Iterates' shows the number of outer iterations in Algorithm \ref{alg:non_smooth_sqp}. From the results reported in Table \ref{tab:results} we find that in both cases, monotone and nonmonotone, the algorithm exhibits a robust behavior across the scales of $\mathrm{dx}$ and $\alpha$. Specifically, the almost constant iteration count for varying $\mathrm{dx}$ can be associated with mesh-independent convergence of the algorithm. Moreover, in all cases highly accurate solutions could be obtained.
\begin{table}[!ht]
	\begin{center}
		\resizebox{0.8\textwidth}{!}{
			\begin{tabular}{ |l|llll|llll|}
				\hline
				& 	\multicolumn{4}{c|}{Monotone case} 	& \multicolumn{4}{c|}{Nonmonotone case}    \\ \hline
				&\multicolumn{4}{c|}{$\alpha=10^{-2}$ $\nu=0.7$  }&  \multicolumn{4}{c|}{$\alpha=10^{-2}$ $\nu=0.7$  }   \\ \hline
				Mesh size	&	Cost	& $\norm{h}$ & Iterates	& CPU time  & Cost	& $\norm{h}$ & Iterates	& CPU time  \\ \hline
				dx=1/16	&	{$2453.4 $} 	& 	{$ 2.0\times 10^{-27}$} &  34  &	{$0.5 $s} 	&{$2505.8$} &		{$ 2.0\times 10^{-27}$}  & 33 &	{$0.4 $s}  \\ \hline
				dx=1/32	&	{$2444.1  $} 	& 	{$ 3.4\times10^{-27}$}  &31	& 	{$1.5 $s} 	&	{$2496.1 $} 	& 	{$ 4.9\times10^{-27}$}  & 31	& 	{$1.5 $s} 	 	  \\ \hline
				dx=1/64	&	{$2441.6  $} 	& 	{$ 1.4\times 10^{-28}$}  &  34	&	{$23.9$s}  & 	{$2493.7 $} 	&  	{$ 3.3\times 10^{-27}$}  &	35 & 	{$39.0$s}    \\ \hline
				&\multicolumn{4}{c|}{$\alpha=10^{-10}$ $\nu=0.7$  }&  \multicolumn{4}{c|}{$\alpha=10^{-10}$ $\nu=0.7$  }   \\ \hline
				Mesh size	&	Cost	& $\norm{h}$	& Iterates & CPU time  	& Cost	& $\norm{h}$ & Iterates	& CPU time  \\ \hline
				dx=1/16	&	{$1.4531 \times 10^{-4} $} 	& 	{$ 7.9\times10^{-24}$}  &34	& 	{$1.3$s} 	&	{$1.4477\times10^{-4 }$} 	& 	{$ 4.2\times 10^{-24}$}  &34	& 	{$1.3  $s} 	  \\ \hline
				dx=1/32	&	{$1.4535 \times 10^{-4}    $} 	& 	{$ 5.4\times10^{-23}$}  &34	& 	{$4.7 $s} 	&	{$1.4474\times10^{-4}  $} 	& 	{$ 1.0\times10^{-22}$}  &34	& 	{$22.2 $s}   \\ \hline
				dx=1/64	&	{$ 1.4535 \times 10^{-4}  $} 	& 	{$ 3.9\times 10^{-21}$}  &34& 	{$23.4$s}  &{$ 1.4476\times10^{-4} $} 	& 	{$ 4.7\times 10^{-21}$}  &	34& 	{$23.8$s}      \\ \hline
				&\multicolumn{4}{c|}{$\alpha=10^{-16}$ $\nu=0.7$  }&  \multicolumn{4}{c|}{$\alpha=10^{-16}$ $\nu=0.7$  }    \\ \hline
				Mesh size	&	Cost	& $\norm{h}$	&Iterates & CPU time  	& Cost	& $\norm{h}$ & Iterates	& CPU time  \\ \hline
				dx=1/16	&	{$1.4531\times10^{-10}  $} 	& 	{$ 4.9\times10^{-17}$}  &55	& 	{$14.1$s} 	& {$1.4477\times10^{-10}  $} 	& 	{$ 8.4\times10^{-17}$}  & 54	& 	{$13.5$s} 	  \\ \hline
				dx=1/32	&	{$1.4535\times10^{-10}  $} 	& 	{$ 8.5\times10^{-17}$}  &55	& 	{$52.2 $s} 	&	{$1.4474\times10^{-10}  $} 	& 	{$ 8.1\times10^{-17}$}  & 55	& 	{$51.6 $s}	  \\ \hline
				dx=1/64	&	{$1.4535\times 10^{-10} $} 	& 	{$  5.9\times 10^{-17}$}  & 55	& 	{$273.6$s}  & {$1.4476 \times10^{-10} $} 	& 	{$ 5.9\times 10^{-17}$}  & 55	& 	{$ 283.3 $s}    \\ \hline
		\end{tabular}}
		\vspace{1em}
		\caption{Convergence performance of the proposed algorithm for monotone and nonmonotone ReLU neural network functions.}	
		\label{tab:results}
	\end{center}
\end{table}
In our computations, we also tested the algorithm in the extreme case of $\alpha =10^{-16}$, which however exhibits still a similar performance as for the last set of examples for both monotone  and  nonmonotone functions. The only difference for this case is that we used $\lambda=10^{-6}$ in \eqref{eq:linear_kkt} rather than $\lambda=\alpha$ as in the other cases.

\section{Conclusion}\label{sec:conclusion}
In this paper, we have studied numerical aspects of optimal control problems with ReLU-network-informed PDEs.
 It was firstly shown that a canonical smoothing  of a ReLU network, though practically very plausible, cannot always preserve its monotonicity, something that could imply lack of uniqueness of solutions for the corresponding  ReLU-network-informed PDEs. Therefore traditional numerical approaches relying on such smooth approximations may encounter difficulties in the solution process.
This motivates us to propose a genuine nonsmooth algorithm which respects the specific structure of ReLU networks in the PDEs.
 The proposed approach does not smoothen the state equation itself, but it rather approximates the derivative of the control-to-state map via smoothing of the max-function appearing at the directional derivatives.
Such approximations were proven to converge strongly to the original directional derivative of the nonsmooth operator in a vanishing smoothing regime. Moreover, this approximation process allows to identify descent directions of the reduced optimal control problem with respect to the nonsmooth PDEs at a given control iterate.
In our numerical tests, the proposed algorithm performs more robust in a benchmark  optimal control problem when compared to  recent nonsmooth algorithms designed  specifically for the optimal control of PDEs with a single max-function. In addition, our algorithm also works well for optimal control of semilinear elliptic PDEs with deeper ReLU network functions, which have a more general nonsmooth structure when compared to a single max-function.

\subsection*{Acknowledgments}

This work is supported by the Deutsche Forschungsgemeinschaft (DFG, German Research Foundation) under Germany's Excellence Strategy -- The Berlin Mathematics Research Center MATH+ (EXC-2046/1, project ID: 390685689).
The work of GD is supported by an NSFC grant, No. 12001194.
The work of MH is partially supported by the DFG SPP 1962, project-145r. KP would like to thank Amal Alphonse for useful discussions.

\bibliographystyle{plain}
\bibliography{kostasbib}
\end{document}